\pgfplotsset{compat=newest}
\crefname{in}{inequality}{inequalities}
\crefname{assumption}{Assumption}{assumptions}
\newcommand{\di}[1]{\,\mathrm{d}#1}
\newcommand{\jump}[1]{\llbracket #1\rrbracket}
\newcommand{\recon}[1]{\left[#1\right]_\e}
\newcommand{\decon}[1]{\left[#1\right]^\e}
\newcommand{\decongen}[2]{\left[#1\right]^{\e_{#2}}}
\newcommand{\eeval}[1]{\left[\frac{#1}{\e}\right]}
\newcommand{\deval}[1]{\left\{\frac{#1}{\e}\right\}}
\newcommand{\twosc}{\stackrel{2}{\rightarrow}}
\newcommand{\stwosc}{\stackrel{2-str.}{\longrightarrow}}
\newcommand{\e}{\varepsilon}
\newcommand{\p}{\varphi}
\newcommand{\R}{\mathbb{R}}
\newcommand{\N}{\mathbb{N}}
\newcommand{\Z}{\mathbb{Z}}
\newcommand{\dive}{\operatorname{div}}
\newcommand{\dist}{\operatorname{dist}}
\newcommand{\id}{\operatorname{id}}
\newcommand{\ddt}{\frac{\operatorname{d}}{\mathrm{d}t}}
\newtcolorbox{mybox}[1]{%
    tikznode boxed title,
    enhanced,
    arc=0mm,
    interior style={white},
    attach boxed title to top left= {yshift=-\tcboxedtitleheight/2-0.05cm, xshift=0.7cm},
    fonttitle=\small\bfseries,
    colbacktitle=white,coltitle=black,
    boxed title style={size=small,colframe=white,boxrule=0pt},
    title={#1}}
\theoremstyle{definition}
\newtheorem{theorem}{Theorem}[section]
\newtheorem{lemma}[theorem]{Lemma}
\newtheorem{remark}[theorem]{Remark}
\numberwithin{equation}{section}
\newcommand{\subjclass}[1]{\bigskip\noindent\emph{2010 Mathematics Subject Classification:}\enspace#1}
\newcommand{\keywords}[1]{\noindent\emph{Keywords:}\enspace#1}
\begin{document}


\baselineskip=17pt


\title{Homogenization of a moving boundary problem with prescribed normal velocity}

\author{Michael Eden\\
University Bremen, Germany\\
leachim@math.uni-bremen.de}

\date{\today}

\maketitle


\begin{abstract}
The analysis and homogenization of a moving boundary problem for a highly heterogeneous, periodic two-phase medium is considered.
In this context, the normal velocity governing the motion of the interface separating the two competing phases is assumed to be prescribed.
Parametrizing the boundary motion via a height function, the so-called \emph{Direct Mapping Method} is employed to construct a coordinate transform characterizing the changes with respect to the initial setup of the geometry.
Utilizing this transform, well-posedness of the problem is established.
After characterizing the limit behavior (with respect to the heterogeneity parameter $\e\to0$) of the functions related to the transformation, the homogenized problem of the heterogeneous two-scale problem is deduced. 

\subjclass{Primary 35R37; Secondary 35B27, 80M40.}

\keywords{Moving Boundary Problem; Homogenization; Two-scale Convergence; Phase Transformation}
\end{abstract}

\section{Introduction}
We consider the analysis and the homogenization of a moving boundary problem that describes phase transitions occurring in highly heterogeneous two-phase media.
Here, the two phases in question are separated via a sharp interface whose exact evolution is not known at the outset.

To me more specific, let $\Omega\subset\R^3$ be a bounded domain and let $\Omega^{(1)}_\e$, $\Omega^{(2)}_\e\subset\Omega$ be $\e$-periodic subdomains representing the initial set-up of the two-phases occupying $\Omega$.
Here, the small parameter $\e$ represents the characteristic length of the inhomogeneities of the medium.
The interface between the competing phases will be denoted by $\Gamma_\e$.
Due to phase transitions, this geometrical setup might change with time leading to domains $\Omega^{(i)}_\e(t)$ ($i=1,2$) and interface $\Gamma_\e(t)$ at time $t$ which, in general, are not necessarily periodic anymore.
With $n_{\Gamma_\e}$ and $V_{\Gamma_\e}$, we denote normal vector pointing outwards $\Omega_\e^{(2)}$ and the normal velocity of $\Gamma_\e(t)$ in normal direction, respectively.

Now, let $\theta^{(i)}_\e=\theta^{(i)}_\e(t,x)$ denote the temperature in the respective domains.
In this work, we consider a two-phase heat problem accounting for latent heat and phase transitions given by 
\begin{subequations}
\begin{alignat}{2}
\partial_t\theta_\e^{(i)}-\kappa^{(i)}_\e\Delta\theta_\e^{(i)}&=f_\e^{(i)}&\quad&\text{in}\ \Omega_\e^{(i)}(t),\label{a}\\
\jump{\theta_\e}&=0&\quad&\text{on}\ \Gamma_\e(t),\label{b}\\
-\jump{\kappa_\e\nabla\theta_\e}\cdot n_\e&=LV_{\Gamma_\e}&\quad&\text{on}\ \Gamma_\e(t),\label{c}\\
V_{\Gamma_\e}&=\e v_\e&\quad&\text{on}\ \Gamma_\e(t)\label{d}
\end{alignat}
\end{subequations}
complemented with appropriate boundary and initial conditions.
The aim of this paper is twofold: $(i)$ show that this two-phase problem admits a unique local-in-time solution where the interval of existence is independent of the parameter $\e$ and $(ii)$ investigate the limit behavior $\e\to0$ thereby establishing an homogenized limit problem approximating (in some sense) the above system.

For the existence part, we rely on a particularly useful approach, which was originally introduced in \cite{H81}, which is sometimes called \emph{Direct Mapping Method} or \emph{Hanzawa transformation}, and where a specific coordinate transformation is constructed.
Please note that using this method it is not possible to consider any type of topological changes.
Regarding the limit process in the context of mathematical homogenization, we employ the notion of (strong) two-scale convergence as introduced in \cite{Al92,N89}.

Combining the analysis of moving boundary problems with the mathematical homogenization leads to significant mathematical and technical challenges.
First, the motion of the interface has to satisfy certain estimates uniformly with respect to the scale parameter $\e$.
This means that the influence of $\e$ has to be accounted for very carefully.
Second, we have to show strong two-scale convergence of some functions related to the transformation as the usual two-scale convergence is not sufficient to pass to the limit (due to the coordinate transform).

Similar moving boundary problems to the system given by \cref{a,b,c,d} without the heterogeneity parameter $\e$ were considered in, e.g., \cite{Ch92,CR92,PSS15}.
The heterogeneous case might arise in situations where the spatial scale at which we can observe such transformations is several orders of magnitude below the size of the materials itself are; typical examples would be phase transformations in porous media or in steel.
Such heterogeneous problems were considered in, e.g., \cite{E04, EKK02,H16}.

For the more general setting of a fully coupled version of System 1.1 where the normal velocity is not prescribed but rather given as a function of the temperature and the geometry of the interface, typical choices would be $v_\e=\theta_\e-\theta_{crit}$ (the law of \emph{kinetic undercooling}) or $v_\e=-H_{\Gamma_\e}+\theta_\e-\theta_{crit}$ (\emph{Gibbs-Thomson undercooling}).
Here, $\theta_{crit}$ denotes the critical temperature of the phase transition in question and $H_{\Gamma_\e}(t)$ the mean curvature function of the interface $\Gamma_\e(t)$.
One possible way to tackle such fully coupled problems is in the context of maximal parabolic regularity, see, e.g., \cite{PS16, PSR13}.
This, however, runs into additional troubles in the heterogeneous case due to the extensive $\e$-independent estimates that would need to be established; e.g., $\theta_\e(t)$ would have to be uniformly bounded in $W^{2,\infty}(\Gamma_\e(t))$.

This work can therefore be seen as an important intermediate step in the analysis of the fully coupled case.
In the existing literature regarding the homogenization of evolving microstructures, the changes in the geometry are usually assumed to be a priori known (the case of prescribed coordinate transform), see \cite{D15, EM17, P09, NM11}; a scenario which is easier to tackle.

This work is organized as follows: 
In Section~\ref{section:setting}, we introduce the $\e$-periodic geometry, the moving boundary problem with prescribed normal velocity as well as the level set equation associated with the normal velocity.
The main results regarding the moving boundary problem, \Cref{theorem:1,theorem:2,theorem:3,theorem:4}, are then given in \Cref{s:main_results}.
Finally, \Cref{s:interface_movement,mi:sec:limit} are dedicated to the detailed proofs of \Cref{theorem:1} and \Cref{theorem:3}, respectively.

\section{Setting and problem statement}\label{section:setting}
\subsection{Geometrical setup}
Let $S=(0,T)$, $T>0$, represent the time interval of interest and let $\Omega\subset\R^3$ be a bounded Lipschitz domain whose outer normal vector we denote with $\nu=\nu(x)$.
In addition, let $\e=(\e_n)_{n\in\N}$ be a monotonically decreasing sequence of positive numbers converging to zero.

Now, take open and disjoint sets $Y^{(1)},$ $Y^{(2)}\subset(0,1)^3=:Y$ such that $Y^{(1)}$ is connected, $\overline{Y^{(2)}}\subset Y$, and $Y=Y^{(1)}\cup\overline{Y^{(2)}}$.
Moreover, let $\Gamma:=\partial Y^{(2)}$ be a $C^3$-hypersurface.
By $n_\Gamma=n_\Gamma(\gamma)$, $\gamma\in\Gamma$, we denote the normal vector of $\Gamma$ pointing outwards of $Y^{(2)}$.

In order to circumvent problems due to complex structures at the boundary, we remove the boundary layer of thickness $\e$ via
$$
\widetilde{\Omega}_\e=\Omega\cap\left(\bigcup_{k\in Z_\e}\e(Y+k)\right),\quad\text{where}\ Z_\e=\{k\in\Z^3\ : \ \e(Y+k)\subset\Omega\}.
$$
Then, we introduce the $\e Y$-periodic domains $\Omega^{(i)}_\e$ ($i=1,2$) and the interface $\Gamma_\e$ representing the two phases and the phase boundary, respectively, via
$$
\Omega_\e^{(2)}=\widetilde{\Omega}_\e\cap\left(\bigcup_{k\in \Z^3}\e(Y+k)\right),\quad \Omega_\e^{(1)}=\Omega\setminus\overline{\Omega_\e^{(2)}},\quad\Gamma_\e=\partial\Omega_\e^{(2)}.
$$
Note that, by design, $\partial\Omega_\e^{(1)}=\partial\Omega$ and $\dist(\partial\Omega,\Gamma_\e)\geq\e$.

With $t\mapsto\Gamma_\e(t)$ and $t\mapsto\Omega_\e^{(i)}(t)$ for $t\in S$, we denote the evolution of the interface and the domains, respectively.
We set
$$
Q_\e^{(i)}:=\bigcup_{t\in S}\{t\}\times\Omega_\e^{(i)}(t),\qquad\Xi_\e:=\bigcup_{t\in S}\{t\}\times\Gamma_\e(t).
$$
Finally, we assume the overall domain $\Omega$ to be time-independent; that is $\Omega=\Omega_\e^{(1)}(t)\cup\Omega_\e^{(2)}(t)\cup\Gamma_\e(t)$ for all $t\in S$.
An illustration of the general geometrical setup is given via \Cref{figure1}.

\begin{figure}[h]
\centering 
\includegraphics[width=\textwidth,]{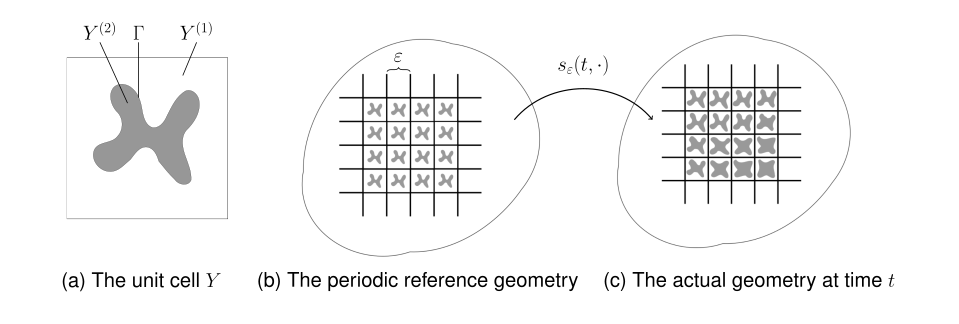}
\caption{Illustration of the geometrical setup.
Here, the motion function $s_\e(t,\cdot)$ characterizes the changes in geometry.}
\label{figure1}
\end{figure}

As a $C^3$-hypersurface, $\Gamma$ admits a tubular neighborhood $U_\Gamma$ of width $a>0$.
Moreover, the function
$$
\Lambda\colon\Gamma\times(-a,a)\to U_\Gamma,\quad \Lambda(\gamma,s):=\gamma+sn_\Gamma(\gamma)
$$
is a $C^2$-diffeomorphism satisfying $\Lambda(\Gamma\times(-a,a))\subset Y$; we refer to \cite[Section 3.1, p.65]{PS16}.
Similarly, we introduce the $\e$-scaled $C^2$-diffeomorphism
$$
\Lambda_\e\colon\Gamma_\e\times(-\e a,\e a)\to U_{\Gamma_\e},\quad \Lambda_\e(\gamma,r)=\gamma+rn_{\Gamma_\e}(\gamma).
$$
the family of interfaces
\begin{equation}\label{mi:eq:faminter}
\Gamma_\e^{(l)}:=\left\{\Lambda_\e(\gamma,l)\ : \ \gamma\in\Gamma_\e\right\}\quad \text{for}\ \ l\in\left[-\e a,\e a\right],
\end{equation}
and the family of tubes around $\Gamma_\e$
$$
U_{\Gamma_\e}(r):=\bigcup_{l\in(-\e ra,\e ra)}\Gamma_\e^{(l)}\qquad\left(r\in(0,1]\right).
$$
We set $U_{\Gamma_\e}=U_{\Gamma_\e}(1)$.
For $\gamma\in\Gamma_\e$, let $L_{\Gamma_\e}(\gamma)=-\nabla_{\Gamma_\e}n_{\Gamma_\e}(\gamma)$ denote the \emph{Weingarten map}, where we have (\cite[Section 2.1]{PS16})
%
\begin{align}\label[in]{weingarten}
\sup_{\gamma\in\Gamma_\e}|L_{\Gamma_\e}(\gamma)|\leq\frac{1}{2\e a}.
\end{align}
For $l\in[-\e a,\e a]$ and $\gamma\in\Gamma_\e^{(l)}$, the normal vector of the interface $\Gamma_\e^{(l)}$ in $\gamma$ is given as $n_{\Gamma_\e}(P_{\Gamma_\e}(\gamma))$, where $P_{\Gamma_\e}\colon U_{\Gamma_\e}\to\Gamma_\e$ denotes the projection operator.
The inverse of $\Lambda_\e$ is given via
$$
\Lambda_\e^{-1}\colon U_{\Gamma_\e}\to\Gamma_\e\times[-\e a,\e a],\quad\Lambda_\e^{-1}(x)=\left(P_{\Gamma_\e}(x),d_{\Gamma_\e}(x)\right)^T.
$$
Here, $d_{\Gamma_\e}\colon U_\Gamma\to\R$ is the signed distance function for $\Gamma_\e$, i.e., 
$$
d_{\Gamma_\e}(x)=\begin{cases}\dist(x,\Gamma_\e),\quad& x\in U_{\Gamma_\e}\setminus\Omega_\e^{(2)}\\ -\dist(x,\Gamma_\e),\quad& x\in U_{\Gamma_\e}\cap\Omega_\e^{(2)}\end{cases}.
$$
%
\subsection{Problem statement}
For $k,l\in\N$, we introduce the Sobolev space
$$
W^{(k,l),\infty}(S\times\Omega)=\left\{u\in L^\infty(S\times\Omega) :\partial_t^iu,D_x^ju\in L^\infty(S\times\Omega)\ (1\leq i\leq k,\ 1\leq j\leq l)\right\}
$$
and note that $W^{(k,k),\infty}(S\times\Omega)=W^{k,\infty}(S\times\Omega)$.

Now, take $\theta_\e^{(i)}=\theta_\e^{(i)}(t,x)$ ($i=1,2$) to represent the temperature in the respective domains $Q_\e^{(i)}$.
In the following, we consider the moving boundary problem given by:
\begin{mybox}{Moving boundary problem with prescribed normal velocity}
\begin{subequations}\label{eq:problem}
\begin{alignat}{2}
\partial_t\theta_\e^{(1)}-\kappa^{(1)}\Delta\theta_\e^{(1)}&=f_\e^{(1)}&\quad&\text{in}\ Q_\e^{(1)},\label{eq:problem:1}\\
\partial_t\theta_\e^{(2)}-\e^2\kappa^{(2)}\Delta\theta_\e^{(2)}&=f_\e^{(2)}&\quad&\text{in}\ Q_\e^{(2)},\label{eq:problem:2}\\
\theta_\e^{(1)}&=\theta_\e^{(2)}&\quad&\text{on}\ \Xi_\e,\label{eq:problem:3}\\
-\left(\kappa^{(1)}\nabla\theta_\e^{(1)}-\e^2\kappa^{(2)}\nabla\theta_\e^{(2)}\right)\cdot n_\e&=LV_{\Gamma_\e}&\quad&\text{on}\ \Xi_\e,\label{eq:problem:4}\\
V_{\Gamma_\e}&=\e v_\e&\quad&\text{on}\ \Xi_\e,\label{eq:problem:5}\\
-\kappa^{(1)}\nabla\theta_\e^{(1)}\cdot\nu&=0&\quad&\text{on}\ S\times\partial\Omega,\label{eq:problem:6}\\
\theta_\e^{(1)}(0)&=\vartheta_\e^{(1)}&\quad&\text{in}\ \Omega_\e^{(1)},\label{eq:problem:7}\\
\theta_\e^{(2)}(0)&=\vartheta_\e^{(2)}&\quad&\text{in}\ \Omega_\e^{(2)}.\label{eq:problem:8}
\end{alignat}
\end{subequations}
\end{mybox}
Here, the positive constants $\kappa^{(i)}$ denote the heat conductivity coefficients and $L$ denotes the constant of latent heat.
The actual mathematical problem connected to this system is as follows:
Given volume heat source densities $f_\e^{(i)}\colon Q_\e^{(i)}\to\R$, a function $v_\e\colon\Xi_\e\to\R$ governing the movement of the interface, and initial values $\vartheta_\e^{(i)}\colon\Omega_\e^{(i)}\to\R$, find the corresponding evolution of the domains, i.e., find $\Omega_\e^{(i)}(t)$ and $\Gamma_\e(t)$ for all $t\in S$, and the temperature functions $\theta_\e^{(i)}\colon Q_\e^{(i)}\to(0,\infty)$ such that all equations of the above system are satisfied.

Now, let $v_\e\in W^{(1,2),\infty}(S\times\Omega)$ be the outward normal velocity of the moving interface $\Gamma_\e(t)$.
Let us assume that the corresponding motion of $\Gamma_\e$ can be described via a regular $C^1$-motion. 
Then, there exists a level set function $\p_\e\colon S\times\Omega\to\R$ such that
\begin{align*}
\Gamma_\e(t)&=\left\{x\in\Omega \ : \ \p_\e(t,x)=0\right\},\\
|\nabla\p_\e(t,x)|&>0\quad \text{on}\ \Xi_\e,\\
\p_\e(t,x)&<0\quad \text{on}\ \partial\Omega.
\end{align*}
The normal velocity $\e v_\e$ and the level set function $\p_\e$ are connected via (\cite[Section 4.1]{OF02})
\begin{equation*}
\partial_t\p_\e=\e|\nabla\p_\e|v_\e \quad\text{on}\ \Xi_\e.
\end{equation*}
Based on these geometric considerations, we formulate the motion problem as a level set problem:
\begin{mybox}{Motion problem via level set equation}
Find $\p_\e\in C^1(S\times\Omega)$ such that
\begin{subequations}
\begin{align}
\partial_t\p_\e&=\e|\nabla\p_\e|v_\e\quad\text{on}\ \Xi_\e,\label{eq:level_set_1}\\
|\nabla\p_\e(t,x)|&>0\quad \text{on}\ \Xi_\e,\label{eq:level_set_2}\\
\frac{\partial_t\p_\e-\e|\nabla\p_\e|v_\e}{\p_\e}&\in W^{(0,1),\infty}(S\times\Omega)\label{eq:level_set_3},\\
\Gamma_\e&=\{x\in\Omega \ : \ \p_\e(0,x)=0\},\label{eq:level_set_4}\\
\Omega_\e^{(1)}&=\{x\in\Omega \ : \ \p_\e(0,x)<0\}.\label{eq:level_set_5}
\end{align}
\end{subequations}
\end{mybox}
The family of sets $(\Gamma_\e(t))_{t\in S}$ defined via 
\begin{equation*}
\Gamma_\e(t)=\{x\in\Omega\ : \ \p_\e(t,x)=0\}\label{eq:level_set_6}
\end{equation*}
is called the solution of the motion problem.
The condition \eqref{eq:level_set_3} is a shorthand for: the function $\frac{\partial_t\p_\e-\e|\nabla\p_\e|v_\e}{\p_\e}\colon (S\times\Omega)\setminus\Xi_\e\to\R$ can be extended to a function in $W^{(0,1),\infty}(S\times\Omega)$.
Note that this condition is merely technical in that it is not needed for the level set function $\p_\e$ to correspond to the motion of the interface; it is, however, needed in \Cref{mi:lem:solfbp}.

We also point out that uniqueness of a solution of the motion problem only asserts uniqueness of the the family of hypersurfaces $(\Gamma_\e(t))_{t\in S}$ but not uniqueness of the level set function $\p_\e$.
Indeed, for every $\alpha>0$, $\alpha\p_\e$ corresponds to the same motion problem.

\section{Main results}\label{s:main_results}
In this section, we present the main results. 
As some of the proofs are fairly long and technical, they are postponed to subsequent chapters: \Cref{s:interface_movement} and \Cref{mi:sec:limit} are devoted to the proofs of \Cref{theorem:1} and \Cref{theorem:3}, respectively.

We start by formulating the requirements for the data (normal velocity, source densities, and initial values) that are needed to ensure the well-posedness of the microscopic problems as well as to facilitate the passage $\e\to0$.
%
\begin{itemize}
\item[(\textbf{A1})]\label{mi:ass:1} Let $v_\e\in W^{(1,3),\infty}(S\times\Omega)$ with $\mathrm{supp}(v_\e)\subset U_{\Gamma_\e}$ and
\begin{align*}
l_v:=\sup_{\e>0}\left(\|v_\e\|_{W^{1,\infty}(S\times\Omega )}+\e\|D^2_x v_\e\|_\infty+\e^2\|D^3_xv_\e\|_\infty\right)<\infty.
\end{align*}
\item[(\textbf{A2})]\label{mi:ass:2}
For $i=1,2$, let $f_\e^{(i)}\in L^2(Q_\e^{(i)})$ and $\vartheta_\e^{(i)}\in L^2(\Omega_\e^{(i)})$ such that 
$$
\sup_{\e>0}\left(\|f_\e^{(i)}\|_{L^2(Q_\e^{(i)})}+\|\vartheta_\e^{(i)}\|_{L^2(\Omega_\e^{(i)})}\right)<\infty.
$$
\item[(\textbf{A3})]\label{mi:ass:3}
There is a function $v\in L^2(S\times\Omega;W^{1,2}_\#(Y))^3$ satisfying
\begin{alignat*}{2}
\decon{v_\e}\to v,\ \ \decon{Dv_\e}\to D_yv,\ \ \e\decon{D^2v_\e}\to D_y^2v\quad\text{in}\ L^2(S\times\Omega\times Y)^{3}.
\end{alignat*}
\end{itemize}

Here, $\decon{v_\e}\colon S\times\Omega\times Y\to\R$ is the periodic unfolding of $v_\e\colon S\times\Omega\to\R$ defined via $\decon{v_\e}(t,x,y)=v\left(t,\e y+\e\left[\frac{x}{\e}\right]\right)$ where $\left[x\right]$ denotes the unique $k\in\Z^3$ for which $x-k\in[0,1)^3$; for details, we refer to \cite{CDG02}.
Furthermore, the number sign subscript $\#$ indicates spaces of periodic functions:
$$
W^{1,2}_\#(Y)=\{u\in W^{1,2}_\mathrm{loc}(\R^3) \ : \ u_{|Y}\in W^{1,2}(Y),\ u(y)=u(e_j+y)\ \text{for a.a.}\ y\in Y\ (j=1,2,3)\}.
$$
If $\decon{v_\e}\to v$ in $L^2(S\times\Omega\times Y)$, we say that $v_\e$ strongly two-scale converges to $v$ ($v_\e\stwosc v$); if $\decon{v_\e}\rightharpoonup v$, we say that $v_\e$ two-scale converges to $v$ ($v_\e\twosc v$).
The correspondence of this notion to the usual definition of two-scale convergence (see \cite{Al92}) can  be found, e.g., in \cite{CDG08}. 

The regularity and the estimates postulated via Assumption (A1) ensure well-posedness of the motion problem given by \cref{eq:level_set_1,eq:level_set_2,eq:level_set_3,eq:level_set_4,eq:level_set_5} and the validity of corresponding a priori estimates.
With Assumption (A2), these results can be used to tackle the heat problem given by \cref{eq:problem:1,eq:problem:2,eq:problem:3,eq:problem:4,eq:problem:5,eq:problem:6,eq:problem:7,eq:problem:8}).
Finally, Assumption (A3) is necessary for the homogenization process.

The following two results, namely, \Cref{theorem:1} and \Cref{theorem:3}, are the cornerstones of this work; their proofs are given in \Cref{s:interface_movement} and \Cref{mi:sec:limit}, respectively.

\begin{theorem}\label{theorem:1}
Under Assumption (A1), there is $T_v=T(l_v)\in S$, which is independent of $\e>0$, and a function $h_\e\colon[0,T_v]\times\Gamma_\e\to(-\e a,\e a)$ such that
$$
\Gamma_\e(t)=\left\{\gamma+h_\e(t,\gamma)n_{\Gamma_\e}(\gamma) \ : \ \gamma\in\Gamma_\e\right\}\quad (t\in[0,T_v]).
$$
The time $T_v$ is increasing for decreasing values of $l_v$ and we have $(0,T_v)=S$ for sufficiently small $l_v>0$.
Also, there is a corresponding, regular $C^1$-motion $s_\e\colon[0,T_v]\times\overline{\Omega}\to\overline{\Omega}$ satisfying $s_\e(0)=\id$, $s_\e(t,\Omega_\e^{(i)})=\Omega_\e^{(i)}(t)$ ($i=1,2$), and
$$
\|Ds_\e\|_\infty\leq2,\quad\|\left(Ds_\e\right)^{-1}\|_\infty\leq2.
$$
\end{theorem}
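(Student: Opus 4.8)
The plan is to solve the level set motion problem, extract a height function, and then build the motion $s_\e$ by hand so that the two displayed derivative bounds are transparent. First I would reformulate the motion problem in the tubular coordinates provided by $\Lambda_\e$. Since $v_\e$ is supported in $U_{\Gamma_\e}$ and the interface stays close to $\Gamma_\e$ for short times, I can look for $\p_\e(t,x) = d_{\Gamma_\e}(x) - g_\e(t,P_{\Gamma_\e}(x))$ near $\Gamma_\e$, so that $\Gamma_\e(t)$ is exactly the graph $\{\gamma + g_\e(t,\gamma)n_{\Gamma_\e}(\gamma)\}$; the function $h_\e$ of the statement will be this $g_\e$. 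Plugging this ansatz into \eqref{eq:level_set_1} and using $|\nabla d_{\Gamma_\e}| = 1$, $\nabla d_{\Gamma_\e}(\gamma) = n_{\Gamma_\e}(\gamma)$ on $\Gamma_\e$, the level set PDE on $\Xi_\e$ reduces to an ODE in $t$ (parametrized by $\gamma\in\Gamma_\e$) of the form $\partial_t g_\e = \e\, v_\e(t,\gamma + g_\e n_{\Gamma_\e}(\gamma))\,\sqrt{1 + (\text{tangential gradient terms})}$ with $g_\e(0,\cdot)=0$. A Picard–Lindelöf / Banach fixed point argument in a suitable space of functions on $\Gamma_\e$ gives a unique short-time solution. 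The key quantitative point is that, by (A1), $\|v_\e\|_{W^{1,\infty}} \le l_v$ uniformly in $\e$ and the factor $\e$ multiplies $v_\e$, so the growth rate of $|g_\e(t,\cdot)|$ is at most $C\e l_v$; choosing $T_v = T(l_v)$ with $C l_v T_v < a$ keeps $|g_\e(t,\gamma)| < \e a$, i.e. the graph stays inside the tube $U_{\Gamma_\e}$, and this $T_v$ is manifestly independent of $\e$, increasing as $l_v\downarrow 0$, and equals $T$ once $l_v$ is small enough. One also needs the higher regularity ($g_\e\in C^1$ in $t$, and enough spatial regularity of $\p_\e$) which is where the $\e\|D^2_x v_\e\|_\infty$ and $\e^2\|D^3_x v_\e\|_\infty$ bounds in (A1) enter: differentiating the fixed point equation tangentially costs one $\e^{-1}$ per derivative (because $\Gamma_\e$ has curvature $O(\e^{-1})$ by \cref{weingarten}), and these are exactly compensated. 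Condition \eqref{eq:level_set_3} is then checked directly from the ansatz, as the numerator vanishes on $\Xi_\e$ with a controlled tangential gradient.

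Next I would construct the motion $s_\e$. The natural choice is to transport points radially in the tube: off a neighborhood of $\Gamma_\e$, set $s_\e(t,\cdot) = \id$, and inside $U_{\Gamma_\e}$ interpolate. Concretely, using the coordinates $(\gamma,r) = \Lambda_\e^{-1}(x)$ with $\gamma = P_{\Gamma_\e}(x)\in\Gamma_\e$ and $r = d_{\Gamma_\e}(x)\in(-\e a,\e a)$, define
\begin{equation*}
s_\e(t,x) = \Lambda_\e\bigl(\gamma,\ r + \chi(r)\,g_\e(t,\gamma)\bigr),\qquad x\in U_{\Gamma_\e},
\end{equation*}
where $\chi\colon(-\e a,\e a)\to[0,1]$ is a fixed cutoff with $\chi\equiv 1$ on a smaller tube $U_{\Gamma_\e}(1/2)$, $\chi\equiv 0$ near $\pm\e a$, and (crucially) $\|\chi'\|_\infty \le C(\e a)^{-1}$; extend $s_\e(t,\cdot) = \id$ outside $U_{\Gamma_\e}$. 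Then $s_\e(0,\cdot)=\id$ since $g_\e(0,\cdot)=0$, and $s_\e(t,\cdot)$ maps the level set $\{r = g_\e(t,\gamma)\}$... wait, more directly: a point $x\in\Gamma_\e$ has $r=0$, so $s_\e(t,x) = \Lambda_\e(\gamma, g_\e(t,\gamma)) = \gamma + g_\e(t,\gamma)n_{\Gamma_\e}(\gamma)\in\Gamma_\e(t)$, and one checks $s_\e(t,\cdot)$ maps $\Omega_\e^{(2)}$ onto $\Omega_\e^{(2)}(t)$ (the sign convention on $d_{\Gamma_\e}$ makes $r<0$ correspond to the interior phase, and $r + \chi(r)g_\e$ preserves this ordering for $t\le T_v$ because $|g_\e|<\e a$). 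Regularity in $t$ is $C^1$ because $g_\e$ is.

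For the Jacobian bounds I would compute $Ds_\e$ in the $(\gamma,r)$-chart. The derivative of $s_\e - \id$ is controlled by $|\nabla_{\Gamma_\e}g_\e|$, $|g_\e|\,\|\chi'\|_\infty$, and $|g_\e|$ times the derivative of $\Lambda_\e$ in $\gamma$ (which involves the Weingarten map, hence a factor $\le (2\e a)^{-1}$ by \cref{weingarten}). Each of these is a product of something of size $O(\e l_v T_v)$ with something of size $O(\e^{-1})$, hence of size $O(l_v T_v)$; shrinking $T_v$ (equivalently, the smallness-of-$l_v$ regime) so that this is $\le 1$ yields $\|Ds_\e - I\|_\infty \le 1$, hence $\|Ds_\e\|_\infty \le 2$, and by a Neumann-series / $\det$ argument $Ds_\e$ is invertible with $\|(Ds_\e)^{-1}\|_\infty \le 2$ (using $(1-\|Ds_\e-I\|)^{-1}\le 2$ after possibly a further, still $\e$-independent, shrinking of $T_v$). \textbf{The main obstacle} I anticipate is bookkeeping the $\e$-powers throughout: both $\Gamma_\e$ and its tube $U_{\Gamma_\e}$ degenerate as $\e\to 0$ (curvatures and the cutoff derivative blow up like $\e^{-1}$), so every differentiation in the construction must be shown to be compensated by the $\e$ (resp. $\e$, $\e^2$) weights built into (A1) and by the $\e$ in $V_{\Gamma_\e}=\e v_\e$; the point of the scaling in (A1) is precisely to make $h_\e$ and $s_\e$, together with their relevant derivatives, bounded uniformly in $\e$. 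Verifying the $C^1$-motion property in the sense used later (regular $C^1$-motion, so that $s_\e(t,\cdot)^{-1}$ is also $C^1$ and the flow is well-defined) is then routine given the Jacobian bounds.
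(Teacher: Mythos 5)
There is a genuine gap at the first, and decisive, step of your plan. After the graph ansatz $\p_\e(t,x)=d_{\Gamma_\e}(x)-g_\e(t,P_{\Gamma_\e}(x))$, the level set equation \eqref{eq:level_set_1} does \emph{not} reduce to an ODE in $t$ parametrized by $\gamma$: since $\nabla\p_\e=\nabla d_{\Gamma_\e}-(DP_{\Gamma_\e})^T\nabla_{\Gamma_\e}g_\e$, one has $|\nabla\p_\e|=\bigl(1+|(\mathds{I}-d_{\Gamma_\e}L_{\Gamma_\e})^{-1}\nabla_{\Gamma_\e}g_\e|^2\bigr)^{1/2}$ up to the standard tubular-coordinate factors, so the right-hand side of your evolution equation for $g_\e$ depends on the tangential gradient of the unknown itself — you even write the square-root factor, but then invoke Picard--Lindel\"of. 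What you actually face is a fully nonlinear first-order (Hamilton--Jacobi-type) equation on $\Gamma_\e$, and a Banach fixed-point argument ``in a suitable space of functions on $\Gamma_\e$'' does not close: estimating the right-hand side in $C^0$ needs $C^1$ control of $g_\e$, estimating it in $C^1$ needs $C^2$ control, and so on — the iteration loses one derivative per step, and (A1) provides only finitely many (weighted) derivatives of $v_\e$, so there is no space in which the map is a contraction. This loss of derivatives is exactly why the paper does not attack the height function directly: it solves the characteristic ODE system \cref{s:ode:1,s:ode:2,s:ode:3,s:ode:4} for $(y_\e,z_\e)$, where Picard--Lindel\"of genuinely applies (\Cref{lemma:estimates:yz}), builds the level set function $\widetilde{\p}_\e$ from these characteristics (\Cref{mi:lem:solfbp}), and only afterwards recovers $h_\e$ via the implicit function theorem applied to $F_{\e,\gamma}(t,r)=\p_\e(t,\Lambda_\e(\gamma,r))$, the uniform bound $\partial_2F_{\e,\gamma}\leq-\nicefrac{1}{3}$ of \Cref{lemma_estimate_Fegamma} supplying the $\e$-independent existence time $T_v$ (\Cref{mi:thm:height}). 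To rescue your route you would either have to run characteristics for your graph equation — which brings you back to the paper's system — or invoke a genuine local existence theory for fully nonlinear first-order PDEs; the ODE theorem you cite is not it.

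A secondary point: even if completed, your construction produces \emph{one} family of surfaces solving the motion problem; to assert that this graph is $\Gamma_\e(t)$, i.e.\ the interface evolution the theorem refers to, you also need uniqueness of the motion problem, which the paper obtains from the characteristics representation $\Gamma_\e(t)=y_\e(t,\Gamma_\e)$ in \Cref{mi:lem:solfbp}. By contrast, your construction of $s_\e$ coincides in substance with the paper's \cref{def_se} (radial displacement cut off at scale $\e a$), and your bookkeeping of the Jacobian — $|\nabla_{\Gamma_\e}h_\e|$, $\e^{-1}|h_\e|\,\|\chi'\|_\infty$, and $\e^{-1}|h_\e|$ against the Weingarten bound \cref{weingarten}, with a final smallness condition giving $\|Ds_\e-\mathds{I}\|_\infty\leq\nicefrac{1}{2}$ and hence both displayed bounds via a Neumann series — matches \Cref{mi:thm:height}(ii) and \Cref{mi:lem:psi}; that part is fine once the height function with those uniform estimates is actually available.
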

\begin{proof}
This follows via \Cref{mi:thm:height} and \Cref{mi:lem:psi}.
The statements and proof of these results are given in \Cref{s:interface_movement}. 
\end{proof}
In the following, we set $S_v=(0,T_v)$.
\begin{theorem}\label{theorem:3}
Under Assumptions (A1) and (A2), there is $s\in L^\infty(S_v\times\Omega\times Y)$ with $\partial_ts$, $D_ys\in L^\infty(S_v\times\Omega\times Y)$ such that $Ds_\e\stwosc D_ys$.
\end{theorem}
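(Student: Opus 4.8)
The plan is to first derive a quantitative formula for the motion map $s_\e$ and its Jacobian in terms of the height function $h_\e$ from \Cref{theorem:1}, and then to pass to the two-scale limit using strong convergence of the geometric data. The key point is that everything the motion map does is localized in the tubular neighborhood $U_{\Gamma_\e}$ of width $\e a$, and there the Hanzawa transformation is an explicit function of $\gamma\in\Gamma_\e$ (equivalently of the projection $P_{\Gamma_\e}$), the signed distance $d_{\Gamma_\e}$, and $h_\e$. Concretely, I would write $s_\e(t,x) = x + \chi_\e(d_{\Gamma_\e}(x))\, h_\e(t, P_{\Gamma_\e}(x))\, n_{\Gamma_\e}(P_{\Gamma_\e}(x))$ (or whatever precise form the construction in \Cref{mi:lem:psi} produces), where $\chi_\e$ is a fixed cutoff that is $1$ near $\Gamma_\e$ and $0$ near $\partial U_{\Gamma_\e}$, scaled to width $\e a$. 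Differentiating, $Ds_\e - I$ is a sum of terms each carrying one spatial derivative: derivatives of $\chi_\e$ give a factor $\e^{-1}$ times $h_\e$ (which is $O(\e)$ by the uniform bound $h_\e \in (-\e a, \e a)$), derivatives of $h_\e$ in the tangential directions give $D_{\Gamma_\e} h_\e$, and derivatives of $n_{\Gamma_\e}\circ P_{\Gamma_\e}$ bring in the Weingarten map, which by \cref{weingarten} is $O(1/(\e a))$ and again gets multiplied by the $O(\e)$ quantity $h_\e$. Thus $Ds_\e$ is bounded in $L^\infty(S_v\times\Omega)$ — consistent with the bound $\|Ds_\e\|_\infty\le 2$ already asserted — and, more importantly, has the right scaling structure to possess a two-scale limit.

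The second step is to identify the limit. I would apply the periodic unfolding operator $[\cdot]^\e$ to $Ds_\e$ and analyze each term. The reference cells here are $Y^{(1)}$, $Y^{(2)}$ with interface $\Gamma = \partial Y^{(2)}$, and the scaled geometry is a literal copy: $\Gamma_\e$ restricted to a cell $\e(Y+k)$ unfolds to $\Gamma$, $P_{\Gamma_\e}$ unfolds to $P_\Gamma$ acting in the $y$-variable, $n_{\Gamma_\e}$ unfolds to $n_\Gamma$, and $d_{\Gamma_\e}(x)$ unfolds to $\e\, d_\Gamma(y)$ so that $\chi_\e(d_{\Gamma_\e})$ unfolds to a fixed cutoff $\chi(d_\Gamma(y))$ on $Y$. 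The one genuinely analytic input is the height function: I expect \Cref{s:interface_movement} to provide, alongside \Cref{theorem:1}, that $h_\e$ (suitably rescaled, e.g. $h_\e/\e$) and its tangential derivatives converge strongly two-scale to a limit $H = H(t,x,\gamma)$ defined on $S_v\times\Omega\times\Gamma$ — this is the analogue for the boundary data of Assumption (A3), and it is presumably where the ``strong two-scale convergence of functions related to the transformation'' mentioned in the introduction is actually established. Granting that, the limit $s = s(t,x,y)$ is obtained by the same formula in the $y$-cell, $s(t,x,y) = y + \chi(d_\Gamma(y))\, H(t,x,P_\Gamma(y))\, n_\Gamma(P_\Gamma(y))$, and $D_y s$ is the corresponding differentiated expression; the strong two-scale convergence $Ds_\e \stwosc D_y s$ then follows because unfolding turns the spatial gradient of an $\e$-periodic-in-microstructure composite into the $y$-gradient of the unfolded expression, up to an error that vanishes in $L^2$ by the uniform $W^{(1,3),\infty}$ bound in (A1) and the strong convergence of $h_\e$. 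The $L^\infty$-bounds on $s$, $\partial_t s$, $D_y s$ are inherited from the uniform $L^\infty$-bounds on $h_\e$, $\partial_t h_\e$, $D_{\Gamma_\e} h_\e$ (the latter two coming from (A1) via \Cref{theorem:1}), since $L^\infty$-bounds pass through unfolding to $L^\infty$-bounds in the limit.

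The main obstacle is the boundary-term bookkeeping: one must show that the pieces of $Ds_\e$ with the dangerous $\e^{-1}$ prefactors — namely $\chi_\e'(d_{\Gamma_\e})\otimes \nabla d_{\Gamma_\e}$ times $h_\e n_{\Gamma_\e}$, and the Weingarten-map term $\chi_\e(d_{\Gamma_\e}) h_\e\, L_{\Gamma_\e}\!\circ\! P_{\Gamma_\e}\, DP_{\Gamma_\e}$ — converge strongly two-scale and not merely weakly. Weak two-scale convergence of these terms would be routine from the $L^\infty$-bounds plus a subsequence argument, but strong convergence requires the $O(\e)$-factor $h_\e$ to itself converge strongly (after rescaling by $\e$), and requires control of the ``corrector'' part of $h_\e$ — the piece that oscillates at scale $\e$ within each cell — which is exactly what the strong two-scale convergence result for $h_\e$ must supply. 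A secondary technical nuisance is the interface $\Xi_\e$ itself: the quotient in \cref{eq:level_set_3} and the fact that $d_{\Gamma_\e}$, $P_{\Gamma_\e}$ are only defined on $U_{\Gamma_\e}$ mean one has to be careful that the unfolded quantities are well-defined and that the cutoff $\chi$ kills all contributions before $\partial U_\Gamma$ is reached; this is handled by the support condition $\mathrm{supp}(v_\e)\subset U_{\Gamma_\e}$ in (A1), which forces $h_\e$ and hence $s_\e - \id$ to be supported well inside the tube. Once these localization and strong-convergence facts are in hand, the theorem is a direct application of the unfolding calculus.
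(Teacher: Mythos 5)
Your reduction step is essentially the paper's: the explicit Hanzawa formula \cref{def_se} for $s_\e$ in terms of $h_\e$, $P_{\Gamma_\e}$, $n_{\Gamma_\e}$ and the $\e$-scaled cutoff, the observation that the dangerous $\e^{-1}$ factors (cutoff derivative, Weingarten map) are compensated by $h_\e=O(\e)$, and the fact that the purely geometric quantities unfold exactly to their cell counterparts (the identities \cref{eq:decongeometric}) all appear in \Cref{mi:lem:psilimit}, which indeed concludes by expressing $\decon{\nabla\psi_\e}$ through $\e^{-1}\decon{h_\e}$ and $\decon{\nabla_{\Gamma_\e}h_\e}$. However, there is a genuine gap at exactly the point you flag and then wave away: you ``expect'' \Cref{s:interface_movement} to provide strong two-scale convergence of $\e^{-1}h_\e$ and $\nabla_{\Gamma_\e}h_\e$. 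It does not. \Cref{theorem:1} and \Cref{mi:thm:height} give only uniform $L^\infty$ bounds, and these yield merely \emph{weak} two-scale convergence along subsequences (the paper states this explicitly before \Cref{lemma:trace_convergence2}); weak convergence is insufficient precisely because of the nonlinear compositions in the transform, which is the difficulty the whole of \Cref{mi:sec:limit} is built to overcome.

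The missing content is the chain of Cauchy-sequence arguments that constitutes the bulk of the paper's proof: one unfolds the ODE system \cref{s:ode:1,s:ode:2,s:ode:3,s:ode:4}, subtracts the equations for two indices $\e_n,\e_m$, and uses Gronwall together with the composition estimates of \Cref{lemma:decon_estimate} and, crucially, Assumption (A3) (strong two-scale convergence of $v_\e$, $\nabla v_\e$, $\e D^2v_\e$) to show that $\e^{-1}\decon{y_\e-\mathrm{Id}}$, $\decon{z_\e}$, $\decon{Dy_\e}$, $\e\decon{Dz_\e}$ are Cauchy (\Cref{lemma:twosC_yz}); this is then propagated to $y_\e^{-1}$, $q_\e=\nabla\widetilde{\p}_\e$ and $\e^{-1}\widetilde{\p}_\e$, and finally to $\e^{-1}\decon{h_\e}$ and $\decon{\nabla_{\Gamma_\e}h_\e}$ via the implicit-function representations \cref{lemma:trace_convergence:eq3} and \cref{eq:surface_gradient_h} (\Cref{lemma:trace_convergence2}). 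Your proposal contains no mechanism for producing this strong convergence — it cannot come from the $W^{(1,3),\infty}$ bounds of (A1) alone, since those are compatible with persistent $\e$-scale oscillations whose two-scale limit is only attained weakly — and it never identifies (A3) as the driving hypothesis, mentioning it only as an ``analogue''. Without supplying this step, the argument establishes at best weak two-scale convergence of $Ds_\e$, not $Ds_\e\stwosc D_ys$.
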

\begin{proof}
The proof of this theorem is given in \Cref{mi:sec:limit}, see \Cref{mi:lem:psilimit}.
\end{proof}

Using the results given in \Cref{theorem:1,theorem:3}, it is then possible to investigate the associated heat conduction problem:
\begin{theorem}\label{theorem:2}
Under Assumptions (A1) and (A2), there is a unique solution of the mathematical problem corresponding to the system given via \cref{eq:problem:1,eq:problem:2,eq:problem:3,eq:problem:4,eq:problem:5,eq:problem:6,eq:problem:7,eq:problem:8}.
In addition, we find that
\begin{align*}
\sup_{\e>0}\left(\|\theta_\e\|^2_{L^\infty(S_v;L^2(\Omega))}+\|\nabla\theta_\e^{(1)}\|^2_{L^2(S_v\times\Omega_\e^{(1)})}+\e^2\|\nabla\theta_\e^{(2)}\|^2_{L^2(S_v\times\Omega_\e^{(2)})}\right)<\infty
\end{align*}
\end{theorem}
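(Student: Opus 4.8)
\textbf{Proof plan for \Cref{theorem:2}.}

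The plan is to transform the moving boundary problem onto the fixed reference configuration using the motion $s_\e$ provided by \Cref{theorem:1}, and then solve the resulting problem on the time-independent domains $\Omega_\e^{(i)}$ by a standard Galerkin argument, tracking the dependence of all constants on $\e$. First I would introduce the pulled-back temperatures $\widehat\theta_\e^{(i)}(t,y):=\theta_\e^{(i)}(t,s_\e(t,y))$ and use the chain rule together with the transport identity for the time derivative along the motion. Since $s_\e(t,\cdot)$ maps $\Omega_\e^{(i)}$ diffeomorphically onto $\Omega_\e^{(i)}(t)$, the transformed system becomes a linear parabolic transmission problem on the fixed domain $\Omega$ with the fixed interface $\Gamma_\e$, with coefficients involving $Ds_\e$, $(Ds_\e)^{-1}$, $\det Ds_\e$, and $\partial_t s_\e$. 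By \Cref{theorem:1} we have $\|Ds_\e\|_\infty\le 2$ and $\|(Ds_\e)^{-1}\|_\infty\le 2$, so $\det Ds_\e$ is bounded above and below away from zero uniformly in $\e$; the ellipticity of the transformed diffusion is therefore preserved, with the crucial observation that the $\e^2$ prefactor in \eqref{eq:problem:2} is carried through the transformation unchanged up to the bounded factors above. The Rankine--Hugoniot-type interface condition \eqref{eq:problem:4} transforms into an inhomogeneous flux-jump condition whose right-hand side $L V_{\Gamma_\e}=\e L v_\e$ is, after transformation, controlled in $L^2(S_v\times\Gamma_\e)$ uniformly in $\e$ by a trace estimate and Assumption (A1) (keeping in mind $\mathrm{supp}(v_\e)\subset U_{\Gamma_\e}$ and the surface-measure scaling $|\Gamma_\e|\sim \e^{-1}$, which is exactly balanced by the factor $\e$).

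Next I would set up the weak formulation on the fixed geometry. Using test functions in $V_\e:=\{w\in H^1(\Omega)\}$ (the transmission condition \eqref{eq:problem:3} is built into working with a single $H^1(\Omega)$ function, since $\jump{\theta_\e}=0$), the transformed problem reads: find $\widehat\theta_\e\in L^2(S_v;V_\e)$ with $\partial_t\widehat\theta_\e\in L^2(S_v;V_\e')$ such that for all $w\in V_\e$,
\begin{equation*}
\langle \partial_t(J_\e\widehat\theta_\e),w\rangle + \int_\Omega A_\e(t)\nabla\widehat\theta_\e\cdot\nabla w \di{y} + (\text{lower-order transport terms}) = \int_\Omega J_\e \widehat f_\e w \di{y} + \e L\int_{\Gamma_\e} \widehat v_\e w \di{\sigma},
\end{equation*}
where $J_\e=\det Ds_\e$, $A_\e = J_\e (Ds_\e)^{-1}(Ds_\e)^{-T}$ on $\Omega_\e^{(1)}$ and with the extra factor $\e^2\kappa^{(2)}$ on $\Omega_\e^{(2)}$, and the transport terms come from $\partial_t s_\e$ and are bounded in $L^\infty$ by Assumption (A1) via the explicit construction of $s_\e$ in \Cref{s:interface_movement}. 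Existence and uniqueness for fixed $\e$ then follow from the standard theory for linear parabolic equations (Galerkin approximation in a basis of $V_\e$, a priori estimates, Lions--Aubin compactness, passage to the limit), after absorbing the time-derivative of $J_\e$ into the lower-order terms; this gives the solution of the original problem by pushing forward with $s_\e(t,\cdot)$.

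The heart of the matter is the uniform-in-$\e$ a priori estimate. Testing with $w=\widehat\theta_\e$ and using $\partial_t J_\e\in L^\infty$ uniformly, $c\le J_\e\le C$, and the ellipticity $A_\e^{(1)}\xi\cdot\xi\ge c|\xi|^2$, $A_\e^{(2)}\xi\cdot\xi\ge c\e^2|\xi|^2$, I would obtain
\begin{equation*}
\frac{1}{2}\ddt\int_\Omega J_\e|\widehat\theta_\e|^2\di{y} + c\int_{\Omega_\e^{(1)}}|\nabla\widehat\theta_\e|^2\di{y} + c\e^2\int_{\Omega_\e^{(2)}}|\nabla\widehat\theta_\e|^2\di{y} \le C\|\widehat\theta_\e\|_{L^2(\Omega)}^2 + \|\widehat f_\e\|_{L^2}\|\widehat\theta_\e\|_{L^2} + \e L\Big|\int_{\Gamma_\e}\widehat v_\e\widehat\theta_\e\di{\sigma}\Big|.
\end{equation*}
The main obstacle is controlling the boundary term $\e L\int_{\Gamma_\e}\widehat v_\e\widehat\theta_\e\di{\sigma}$ uniformly in $\e$: a naive trace inequality on $\Gamma_\e$ costs a negative power of $\e$. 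The resolution is a scaled trace estimate on the reference cell: for $u\in H^1(Y^{(1)})$, $\|u\|_{L^2(\Gamma)}^2\le C(\|u\|_{L^2(Y^{(1)})}^2 + \|u\|_{L^2(Y^{(1)})}\|\nabla u\|_{L^2(Y^{(1)})})$, which after $\e$-scaling and summation over cells yields $\|\widehat\theta_\e\|_{L^2(\Gamma_\e)}^2\le C(\e^{-1}\|\widehat\theta_\e\|_{L^2(\Omega)}^2 + \e^{-1}\|\widehat\theta_\e\|_{L^2(\Omega)}\|\nabla\widehat\theta_\e\|_{L^2(\Omega_\e^{(1)})})$. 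Hence $\e|\int_{\Gamma_\e}\widehat v_\e\widehat\theta_\e|\le \e l_v\|\widehat\theta_\e\|_{L^1(\Gamma_\e)}\lesssim \e^{1/2}l_v|\Gamma_\e|^{1/2}\|\widehat\theta_\e\|_{L^2(\Gamma_\e)}$, and combining with the scaled trace bound the factors of $\e$ cancel precisely, leaving a term of the form $C l_v(\|\widehat\theta_\e\|_{L^2(\Omega)} + \|\widehat\theta_\e\|_{L^2(\Omega)}^{1/2}\|\nabla\widehat\theta_\e\|_{L^2(\Omega_\e^{(1)})}^{1/2})$, which by Young's inequality is absorbed into $\frac{c}{2}\|\nabla\widehat\theta_\e\|_{L^2(\Omega_\e^{(1)})}^2 + C\|\widehat\theta_\e\|_{L^2(\Omega)}^2 + C$. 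Gr\"onwall's inequality on $[0,T_v]$ then closes the estimate, giving the bound on $\|\widehat\theta_\e\|_{L^\infty(S_v;L^2(\Omega))}$, $\|\nabla\widehat\theta_\e^{(1)}\|_{L^2(S_v\times\Omega_\e^{(1)})}$, and $\e\|\nabla\widehat\theta_\e^{(2)}\|_{L^2(S_v\times\Omega_\e^{(2)})}$; transforming back via $s_\e$ (whose Jacobian and inverse are uniformly bounded) transfers these to $\theta_\e$ with the same uniform constants, which is exactly the claimed estimate.
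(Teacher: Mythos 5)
Your plan is correct and follows essentially the same route as the paper: the paper's proof likewise transforms the problem to the fixed reference geometry via $s_\e$ (using the uniform bounds on $Ds_\e$ and $(Ds_\e)^{-1}$ from \Cref{theorem:1}) and then invokes the standard parabolic theory for the resulting fixed-domain transmission problem, citing \cite{EM17} for the details you spell out (Galerkin scheme, $\e$-scaled trace estimate for the interface term as in \Cref{lemma:trace_convergence}, Gr\"onwall). Your explicit treatment of the boundary term $\e L\int_{\Gamma_\e}\widehat v_\e\widehat\theta_\e\di{\sigma}$ via the scaled trace inequality is exactly the mechanism that makes the estimate uniform in $\e$, so no gap here.
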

\begin{proof}
Using the transformation function $s_\e$ (given via \Cref{theorem:1}) to arrive at a fixed-domain formulation of the problem, we are almost exactly in the situation described in~\cite{EM17} (without the mechanical part).
\end{proof}

We set $Q_Y=\bigcup_{(t,x)\in S_v\times\Omega}\{(t,x)\}\times Y^{(2)}(t,x)$. 
With $\mathds{1}_E$, we denote the indicator function of a set $E$.
\begin{theorem}\label{theorem:4}
Let Assumptions (A1)--(A3) hold.
There are functions $\theta\in L^2(S_v;W^{1,2}(\Omega))$ and $\theta^{(2)}\in L^2\left(Q_Y\right)$, where $\theta^{(2)}(t,x,\cdot)\in W^{1,2}(Y^{(2)}(t,x))$ for almost all $(t,x)\in S_v\times\Omega$, such that 
$$
\mathds{1}_{\Omega_\e^{(1)}}\theta_\e^{(1)}\rightharpoonup|Y^{(1)}(t,x)|\theta,\ \ \mathds{1}_{\Omega_\e^{(2)}}\theta_\e^{(2)}\rightharpoonup\int_{Y^{(2)}(t,x)}\theta^{(2)}\di{y}
\quad\text{in}\ L^2(S\times\Omega).
$$
Moreover, they solve the following homogenized distributed microstructure problem:
The macroscopic temperature $\theta$ is governed by an effective heat conduction problem given via
\begin{subequations}
\begin{alignat}{2}
\partial_t\theta-\dive(\kappa^{h}\nabla\theta)&=f^{h}+f_\Gamma^h&\quad&\text{in}\ \ S_v\times\Omega,\\
-\kappa^{h}\nabla\theta\cdot\nu&=0&\quad&\text{on}\ \  S_v\times\partial\Omega,\\
\theta(0)&=\vartheta^{h}&\quad&\text{in}\ \Omega,
\end{alignat}
which is coupled, via the Dirichlet boundary condition \eqref{mi:eq:diri}, to a micro heat problem with time dependent microstructures for $\theta^{(2)}$ in the form of
\begin{alignat}{2}
\partial_t\theta^{(2)}-\kappa^{(2)}\Delta_y\theta^{(2)}&=f^{(2)}&\quad&\text{in}\ Y^{(2)}(t,x),\, t\in S_v,\, x\in\Omega,\\
\theta^{(2)}&=\theta&\quad&\text{on}\ \Gamma(t,x),\, t\in S_v,\, x\in\Omega,\label{mi:eq:diri}\\
\theta^{(2)}(0)&=\vartheta^{(2)}&\quad&\text{in}\ \Omega\times Y^{(2)}.
%
\intertext{Finally, the motion of the interface $\Gamma(t,x)$ in normal direction is governed by}
V_{\Gamma}&=v&\quad&\text{on}\ \Gamma(t,x),\, t\in S_v,\, x\in\Omega.
\end{alignat}
\end{subequations}
Here, the effective coefficients are given as
\begin{alignat*}{2}
f^{h}&=\int_{Y^{(1)}(t,x)}f^{(1)}\di{y},&\quad f_\Gamma&=\int_{\Gamma(t,x)}Lv+\kappa^{(2)}\nabla_y\theta^{(2)}\cdot n\di{\sigma},\\
\vartheta^{h}&=\int_{Y^{(1)}(t,x)}\vartheta^{(1)}\di{y},&\quad
\left(\kappa^{h}\right)_{ij}&=\kappa^{(1)}\min_{\tau\in W^{1,2}(Y^{(1)}(t,x))}\int_{Y^{(1)}(t,x)}\left(\nabla_y\tau+e_j\right)\cdot e_i\di{y},
\end{alignat*}
and $f^{(i)}$, $\vartheta^{(i)}$ ($i=1,2$), and $v$ are the two-scale limits of their corresponding $\e$-counterparts.
\end{theorem}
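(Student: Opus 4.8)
The plan is to pass to the two-scale limit in the fixed-domain weak formulation of the heat problem obtained via the Hanzawa transform $s_\e$ from \Cref{theorem:1}, using the strong two-scale convergence $Ds_\e\stwosc D_ys$ from \Cref{theorem:3} to handle the transformed (and therefore $\e$-dependent, oscillating) coefficients. First I would recall the transformed system: setting $\widehat\theta_\e^{(i)}=\theta_\e^{(i)}\circ s_\e$, the equations \eqref{eq:problem:1}--\eqref{eq:problem:8} become a two-phase parabolic system on the \emph{fixed} domains $\Omega_\e^{(i)}$ with coefficient matrices built from $J_\e:=\det Ds_\e$, $A_\e:=J_\e(Ds_\e)^{-1}(Ds_\e)^{-T}$, and a transport term coming from $\partial_t s_\e$; the a priori bounds of \Cref{theorem:2} transfer to give $\sup_\e\big(\|\widehat\theta_\e\|_{L^\infty(S_v;L^2(\Omega))}^2+\|\nabla\widehat\theta_\e^{(1)}\|_{L^2(S_v\times\Omega_\e^{(1)})}^2+\e^2\|\nabla\widehat\theta_\e^{(2)}\|_{L^2(S_v\times\Omega_\e^{(2)})}^2\big)<\infty$, because $\|Ds_\e\|_\infty,\|(Ds_\e)^{-1}\|_\infty\le2$ uniformly. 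From \Cref{theorem:3} and the algebraic continuity of $M\mapsto\det M$ and $M\mapsto M^{-1}$ on the compact set $\{2^{-1}\le\|M\|\le2\}$, together with an $L^\infty$ bound and $L^2$ strong two-scale convergence, I get $J_\e\stwosc J:=\det D_ys$ and $A_\e\stwosc A$ where $A:=J(D_ys)^{-1}(D_ys)^{-T}$, and similarly for the time-derivative transport coefficients; this is exactly why \emph{strong} two-scale convergence of $Ds_\e$ (not merely weak) was needed — products of two-scale convergent sequences do not converge unless one factor converges strongly.

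Next I would extract the limits of the temperature fields. By the standard two-scale compactness results (Allaire--Nguetseng, plus the dimensional reduction/unfolding machinery of \cite{CDG08,CDG02}), the uniform bound on $\nabla\widehat\theta_\e^{(1)}$ in the connected phase yields $\widehat\theta_\e^{(1)}\stwosc\widehat\theta$ with $\widehat\theta=\widehat\theta(t,x)\in L^2(S_v;W^{1,2}(\Omega))$ independent of $y$, and $\nabla\widehat\theta_\e^{(1)}\twosc\nabla_x\widehat\theta+\nabla_y\widehat\theta^{(1)}_1$ with a corrector $\widehat\theta^{(1)}_1\in L^2(S_v\times\Omega;W^{1,2}_\#(Y^{(1)}))$. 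In the disconnected phase, the scaling $\e^2\|\nabla\widehat\theta_\e^{(2)}\|^2$ bounded gives $\e\nabla\widehat\theta_\e^{(2)}\twosc\nabla_y\widehat\theta^{(2)}$ so that $\widehat\theta_\e^{(2)}\stwosc\widehat\theta^{(2)}(t,x,y)\in L^2(S_v\times\Omega;W^{1,2}(Y^{(2)}))$; the interface condition \eqref{eq:problem:3} on $\Gamma_\e$ (which is fixed after transformation) passes to the limit via the two-scale trace theorem to give $\widehat\theta^{(2)}=\widehat\theta$ on $\Gamma=\partial Y^{(2)}$. Then I would transform \emph{back}: the sought limits $\theta,\theta^{(2)}$ in the statement are $\widehat\theta$ and $\widehat\theta^{(2)}$ pushed forward by the limiting micro-transformation $y\mapsto s(t,x,y)$, whose image of $Y^{(2)}$ is $Y^{(2)}(t,x)$ and whose image of $\Gamma$ is $\Gamma(t,x)$; this also identifies the weak $L^2(S\times\Omega)$ limits of $\mathds{1}_{\Omega_\e^{(i)}}\theta_\e^{(i)}$ as the claimed cell-averages, since $\mathds{1}_{\Omega_\e^{(i)}}\twosc\mathds{1}_{Y^{(i)}}$ and the change of variables on the cell turns $\int_{Y^{(i)}}$ into $\int_{Y^{(i)}(t,x)}$.

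With all convergences in hand I would plug into the weak formulation. Choosing test functions of the two-scale form $\p(t,x)+\e\psi(t,x,x/\e)$ (with $\psi$ $Y$-periodic) for the $\theta^{(1)}$-equation and $\e\phi(t,x,x/\e)$-type / purely microscopic test functions for the $\theta^{(2)}$-equation, the limit of the stiffness term in phase (1) produces $\int A(t,x,y)(\nabla_x\widehat\theta+\nabla_y\widehat\theta^{(1)}_1)\cdot(\nabla_x\p+\nabla_y\psi)$; the usual separation of scales gives the cell problem defining $\widehat\theta^{(1)}_1$ in terms of $\nabla_x\widehat\theta$ and, after back-transformation, the minimization formula for $\kappa^h_{ij}$ stated in the theorem (note the stated formula is the transformed cell problem on $Y^{(1)}(t,x)$, consistent with $A$ encoding the Jacobian). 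The interface flux term $\int_{\Xi_\e}LV_{\Gamma_\e}(\dots)=\e\int_{\Xi_\e}Lv_\e(\dots)$ together with the flux-jump equation \eqref{eq:problem:4} becomes, after the transformed surface measure two-scale converges, the source $f_\Gamma^h=\int_{\Gamma(t,x)}(Lv+\kappa^{(2)}\nabla_y\theta^{(2)}\cdot n)\di\sigma$ appearing on the macro-equation, while the same boundary term read from the micro side closes the micro heat equation for $\theta^{(2)}$ on $Y^{(2)}(t,x)$. The motion relation $V_\Gamma=v$ is the direct two-scale limit of \eqref{eq:problem:5} $V_{\Gamma_\e}=\e v_\e$ under the $\e$-rescaling intrinsic to the cell. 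Uniqueness of the limit system follows by energy estimates on the coupled macro-micro problem (it is linear given the geometry), and since the limit is unique the whole sequence converges, not just a subsequence. The main obstacle I anticipate is the rigorous treatment of the \emph{moving} interface integrals: one must show that the transformed surface measures on the fixed $\Gamma_\e$ (carrying the Jacobian factor $|J_\e (Ds_\e)^{-T} n_{\Gamma_\e}|$) two-scale converge strongly to the corresponding measure on $\Gamma$, and that the traces of $\widehat\theta_\e^{(2)}$ and of the corrector fields on $\Gamma_\e$ converge in the two-scale sense with the right surface density — this couples the strong two-scale convergence of $Ds_\e$ from \Cref{theorem:3} with two-scale trace theorems and is the technical heart of identifying $f_\Gamma^h$ and the Dirichlet coupling \eqref{mi:eq:diri}.
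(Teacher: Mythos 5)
Your plan is essentially the paper's own proof: the paper disposes of \Cref{theorem:4} by invoking the strong two-scale convergence of $Ds_\e$ (\Cref{mi:lem:psilimit}, i.e.\ \Cref{theorem:3}) and then running the standard two-scale limit procedure for the fixed-domain (Hanzawa-transformed) formulation, deferring the details to the thermoelasticity homogenization in \cite{EM17}, of which this is a special case. Your outline — transformed coefficients $J_\e$, $A_\e$ converging strongly in the two-scale sense, two-scale compactness for the two phases with the $\e^2$-scaled gradient in $\Omega_\e^{(2)}$, two-scale test functions, trace/surface-measure limits for the interface terms, and back-transformation to $Y^{(i)}(t,x)$ — matches that route, so no further comparison is needed.
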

\begin{proof}
Due to the strong convergence result of \Cref{mi:lem:psilimit}, this homogenization results follows via a standard two-scale limit procedure and is a special case of the homogenization of the thermoelasticity problem performed in \cite{EM17}.
\end{proof}

\section{Interface motion (proof of \texorpdfstring{\Cref{theorem:1}}{Theorem 6.1})}\label{s:interface_movement}

This section is devoted to the proof of \Cref{theorem:1}.
As a short guideline, this proof follows the following strategy:
\begin{itemize}
	\item[$(i)$] We investigate a nonlinear, parametrized ODE-system -- given by \cref{s:ode:1,s:ode:2,s:ode:3,s:ode:4} -- tracking the interface motion.
	This is done via \Cref{lemma:estimates:yz,mi:lem:tdelta}.
	\item[$(ii)$] We then show that the motion problem given via conditions~\eqref{eq:level_set_1}-\eqref{eq:level_set_5} has a unique solution; see \Cref{mi:lem:solfbp}.
	\item[$(iii)$] In \Cref{mi:thm:height}, the local-in-time existence of the height function $h_\e$ is then deduced via the implicit function theorem.
	\item[$(iv)$] Finally, we construct a family of $C^1$-diffeomorphisms $s_\e(t,\cdot)\colon\overline{\Omega}\to\overline{\Omega}$ and investigate its properties; see \Cref{mi:lem:psi}.
\end{itemize}

The first two steps can be found in \Cref{mi:ssec:fbp}, and steps $(iii)$ and $(iv)$ are the topic of \Cref{mi:ssec:motion}.
In the following, we take $C>0$ to denote any generic constant that is independent of both $l_v$ and $\e$ (but may depend on the interface $\Gamma=\partial Y^{(2)}$ as well as the overall domain $\Omega$).
In addition, we take $C(l_v)$ (sometimes with a subscript, e.g., $C_w(l_v)$) to denote the value at $l_v$ of any monotonically increasing, continuous, and $\e$-independent function $C\colon[0,\infty)\to(0,\infty)$. 

Note that this section is structurally similar to \cite[Section 3]{Ch92}, where the main substantial differences are due to the parameter $\e$ and its role in the context of homogenization.

\subsection{Interface motion problem}\label{mi:ssec:fbp}
We consider the following nonlinear ODE system:
\begin{mybox}{ODE system describing the interface motion}
Find $y_\e, z_\e\colon S\times U_{\Gamma_\e}\to\R^3$ such that
\begin{subequations}\label{s:ode}
\begin{alignat}{2}
\partial_ty_\e(t,x)&=-\e\frac{z_\e(t,x)}{|z_\e(t,x)|}v_\e(t,y_\e(t,x))&\quad&\text{in}\ S\times U_{\Gamma_\e}\label{s:ode:1},\\
\partial_tz_\e(t,x)&=\e|z_\e(t,x)|\nabla v_\e(t,y_\e(t,x))&\quad&\text{in}\ S\times U_{\Gamma_\e},\label{s:ode:2}\\
y_\e(0,x)&=x&\quad&\text{in}\ U_{\Gamma_\e},\label{s:ode:3}\\
z_\e(0,x)&=-n_{\Gamma_\e}(P_{\Gamma_\e}x)&\quad&\text{in}\ U_{\Gamma_\e}.\label{s:ode:4}
\end{alignat}
\end{subequations}
\end{mybox}
%
We extend every solution $y_\e$ to all of $\Omega$ by setting $y_\e(t,x)=x$.
Due to $\mathrm{supp}\,v_\e\subset U_{\Gamma_\e}$, $y_\e$ is then continuous across $\partial U_{\Gamma_\e}$.

\begin{remark}
In \Cref{mi:lem:solfbp}, we show that the function $y_\e$ characterizes the interface motion in the sense that $\Gamma_\e(t)=y_\e(t,\Gamma_\e)$. 
The function $z_\e$ describes the direction of the motion.
This is illsutrated in \Cref{mi:fig:tracking}.
Note that, if $\nabla v_\e\equiv0$, the solution satisfies $y_\e(t,\gamma)=\gamma+d_{\Gamma_\e}(y_\e(t,\gamma))n_{\Gamma_\e}(\gamma)$ for all $\gamma\in\Gamma$.
\end{remark}

\begin{figure}[h]
\centering
\begin{tikzpicture}[scale=1]
	\pgftext{\includegraphics[width=.5\textwidth,]{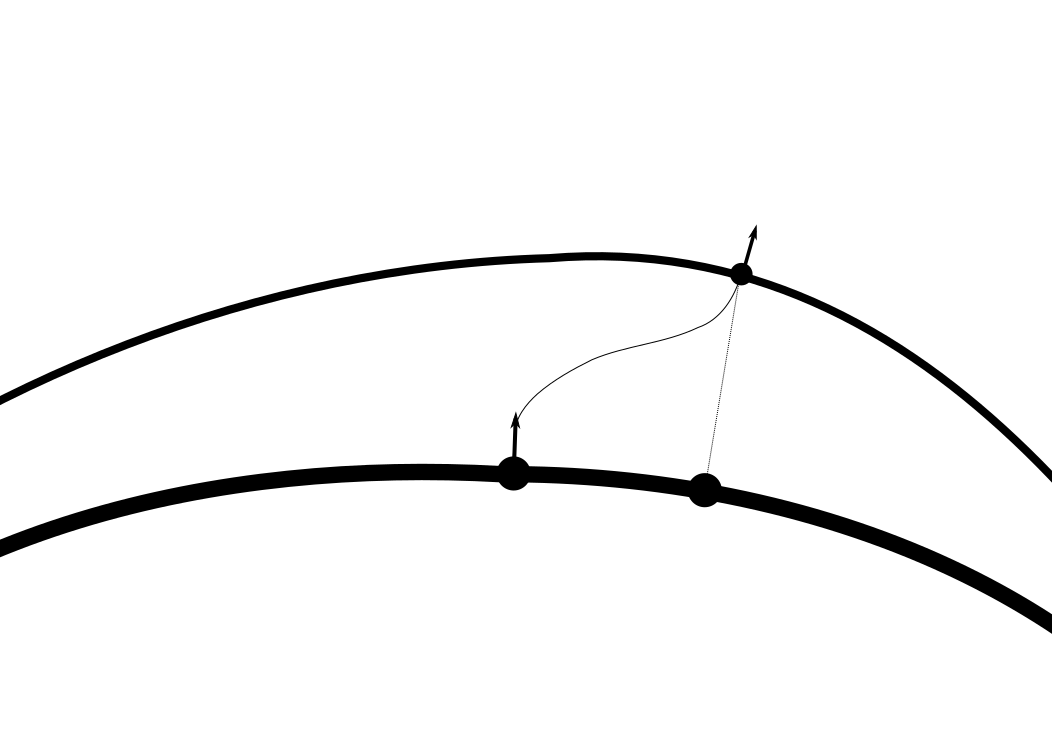}};
	\draw[thick] (-2.8,-1) -- (-3.2,-1.7);
	\draw (-3.1,-1.9) node[left] {{$\Gamma_\e$}};
	\draw[thick] (-2.8,0.32) -- (-3.2,0.8);
	\draw (-3.2,0.7) node[above] {{$\Gamma_\e(t)$}};
	\draw (-0.2,-0.9) node[below] {{$\gamma$}};
	\draw (-0.64,0) node[below] {{{\footnotesize$n_{\Gamma_\e}(\gamma)$}}};
	\draw (1.3,-1.1) node[below] {{{\footnotesize$P_{\Gamma_\e}(y_\e(t,\gamma))$}}};
	\draw (1.2,1.5) node[below] {{{\footnotesize $z_\e(t,\gamma)$}}};
	\draw (2.4,1.27) node[below] {{{\footnotesize$y_\e(t,\gamma)$}}};
\end{tikzpicture}
\caption[Tracking of the interface motion]{Part of the surface $\Gamma_\e$ and its position at time $t$, $\Gamma_\e(t)$. The function $y_\e$ characterizes the motion by tracking the paths of the material points. As an example, we see the path of $y_\e$ for $\gamma=y_\e(0,\gamma)$ over the interval $(0,t)$.
In addition, we see the change in the normal vector from $n_{\Gamma_e}(\gamma)=z_\e(0,\gamma)$ to $z_\e(t,\gamma)$.
The goal is to find the corresponding height function $h_\e$ that satisfies $h_\e(P_{\Gamma_\e}(y_\e(t,\gamma)))=d_{\Gamma_\e}(y_\e(t,\gamma))$.}
\label{mi:fig:tracking}
\end{figure}

We introduce functions
\begin{align*}
f_\e&\colon\overline{S}\times\left(\R^3\times\R^3\setminus\{0\}\right)\to\R^3\times\R^3,\quad f_\e(t,(y,z))=\left(\frac{z}{\left|z\right|}v_\e(t,y),\left|z\right|\nabla v_\e(t,y)\right)^T,\\
g_\e&\colon\Omega\to\R^3\times\R^3,\quad g_\e(x)=\left(x,-n_{\Gamma_\e}(P_{\Gamma_\e}(x))\right)^T.
\end{align*}
Setting $w_\e=(y_\e,z_\e)^T$, \cref{s:ode:1,s:ode:2,s:ode:3,s:ode:4} then become
\begin{subequations}\label{s:ode_short}
\begin{alignat}{2}
\partial_tw_\e(t,x)&=\e f_\e\left(t,w_\e(t,x)\right)&\quad&\text{in}\ S\times U_{\Gamma_\e},\\
w_\e(0,x)&=g_\e(x)&\quad&\text{in}\ U_{\Gamma_\e}.
\end{alignat}
\end{subequations}
%

\begin{lemma}\label{lemma:estimates:yz}
Let Assumption (A1) hold.
The ODE system given via \cref{s:ode:1,s:ode:2,s:ode:3,s:ode:4} admits a unique solution $(y_\e,z_\e)\in W^{(1,2),\infty}(S\times U_{\Gamma_\e})^6$.
Additionally, there exists a monotonically increasing, continuous function $C_w\colon[0,\infty)\to(0,\infty)$, which is independent of the parameter $\e$, such that
\begin{align*}
\|D_xy_\e-\mathds{I}\|_\infty+\|\partial_tD_xy_\e\|_\infty+\e\|D_x^2y_\e\|_\infty&\leq l_vC_w(l_v),\\
\e\|D_xz_\e\|_\infty+\e^2\|D_x^2z_\e\|_\infty&\leq C_w(l_v).
\end{align*} 
\end{lemma}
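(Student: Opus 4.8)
The plan is to treat the system in the compact form \eqref{s:ode_short} and argue in three stages: local existence and regularity from a Picard-type fixed point / standard ODE theory with parameter dependence, then propagation of the $C^1$-in-$x$ and $C^2$-in-$x$ bounds via Gronwall estimates on the differentiated equations, keeping careful track of how the powers of $\e$ in Assumption (A1) conspire with the $\e$-prefactor in \eqref{s:ode_short} to produce $\e$-uniform constants. First I would note that the right-hand side $f_\e(t,\cdot)$ is smooth away from $z=0$, and that the initial data $g_\e$ has $|z|=1$; equation \eqref{s:ode:2} gives $\partial_t|z_\e|^2 = 2\e|z_\e|^2\, n\cdot\nabla v_\e$ where $n=z_\e/|z_\e|$, so $|z_\e(t,x)| = \exp\bigl(\e\int_0^t (z_\e/|z_\e|)\cdot\nabla v_\e\,\mathrm{d}\tau\bigr)$ stays in a fixed compact subset of $(0,\infty)$ for $t\in S$ as long as $\e\|\nabla v_\e\|_\infty T \le \e l_v T$ is bounded — which it is. This keeps the solution away from the singular set $\{z=0\}$, so the Picard–Lindelöf theorem (applied pointwise in $x$, with $x$ as a parameter) yields a unique solution on all of $S$, and the $W^{1,\infty}$-in-$t$ bounds follow immediately since $\partial_t w_\e = \e f_\e(t,w_\e)$ and $f_\e$ is bounded along the trajectory.

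For the spatial derivatives I would differentiate \eqref{s:ode_short} in $x$. Writing $A_\e := D_x y_\e$, $B_\e := D_x z_\e$, the chain rule gives a linear ODE system $\partial_t (A_\e, B_\e) = \e\, \bigl( D_{(y,z)} f_\e(t,w_\e) \bigr) (A_\e, B_\e)$ with initial data $A_\e(0) = \mathds{I}$ (extended by identity) and $B_\e(0) = -D_x\bigl(n_{\Gamma_\e}\circ P_{\Gamma_\e}\bigr)$, whose $L^\infty$-norm is $O(1/\e)$ by the Weingarten bound \cref{weingarten} and the chain rule for $P_{\Gamma_\e}$. The Jacobian $D_{(y,z)} f_\e$ has blocks involving $v_\e$, $\nabla v_\e$ ($O(1)$ by (A1)) and $\nabla^2 v_\e$ (contributing to the $\partial z$-derivative of the second component, $O(1/\e)$ by (A1)). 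The key observation is that the "bad" $O(1/\e)$ term $|z|\nabla^2 v_\e$ multiplies $A_\e$ in the equation for $B_\e$, and it appears with the prefactor $\e$, so it becomes $O(1)$; meanwhile the equation for $A_\e$ only sees $O(1)$ coefficients times $(A_\e,B_\e)$ with an $\e$ prefactor, except for the $\e |z| (\partial_z(z/|z|)) \nabla v_\e \cdot B_\e$ term, which is $\e \cdot O(1) \cdot B_\e$. So Gronwall on the pair $(\|A_\e - \mathds{I}\|_\infty, \e\|B_\e\|_\infty)$ — treating $\e B_\e$ as the natural unknown — closes: one gets $\|A_\e(t) - \mathds{I}\|_\infty + \e\|B_\e(t)\|_\infty \le \bigl(\|A_\e(0)-\mathds{I}\| + \e\|B_\e(0)\| + \e t\|f_\e\|\bigr)\exp(C\e t \cdot (\text{stuff}))$. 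Since $\e\|B_\e(0)\| = O(1)$ but the $\mathds{I}$-part starts at $0$, I need to be slightly more careful to extract the factor $l_v$ in front of $\|A_\e - \mathds{I}\|_\infty$: I would split $f_\e = $ (terms linear in $v_\e, \nabla v_\e$), each of which carries a factor bounded by $l_v$, so the inhomogeneity driving $A_\e$ away from $\mathds{I}$ is $O(\e l_v)$ and the coupling back from $B_\e$ into the $A_\e$-equation also carries $\e\|\nabla v_\e\| = O(\e l_v)$, giving $\|A_\e - \mathds{I}\|_\infty \le l_v C_w(l_v)$ after Gronwall. The $\partial_t D_x y_\e$ bound is then read off from the differentiated equation $\partial_t A_\e = \e(\ldots)$ directly.

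For the second spatial derivatives I would differentiate once more: $\partial_t D_x^2 w_\e = \e D_{(y,z)}f_\e \cdot D_x^2 w_\e + \e\, D^2_{(y,z)}f_\e : (D_x w_\e, D_x w_\e)$, a linear ODE for $D_x^2 w_\e$ with a source term quadratic in the already-controlled first derivatives. The second Jacobian of $f_\e$ now involves $\nabla^3 v_\e = O(1/\e^2)$ by (A1), and $D_x w_\e$ contains $B_\e = O(1/\e)$, so the worst source term scales like $\e \cdot (1/\e^2) \cdot 1 + \e\cdot(1/\e)\cdot(1/\e) = O(1/\e)$ for the $z$-component and worse; hence I expect $\e\|D_x^2 y_\e\|_\infty = O(l_v)$ and $\e^2\|D_x^2 z_\e\|_\infty = O(1)$, matching the claim, once I again organize the Gronwall argument around the rescaled quantities $\e D_x^2 y_\e$ and $\e^2 D_x^2 z_\e$ and use that the initial data $D_x^2 z_\e(0) = -D_x^2(n_{\Gamma_\e}\circ P_{\Gamma_\e})$ is $O(1/\e^2)$ (one more chain-rule factor of $1/\e$ than the first derivative). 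The main obstacle — and the only genuinely delicate point — is precisely this bookkeeping of the $\e$-powers: one must verify that every occurrence of a high derivative of $v_\e$ (each worth a negative power of $\e$) or of $n_{\Gamma_\e}$, $P_{\Gamma_\e}$ (Weingarten, worth $1/\e$ per derivative) is compensated, after multiplying by the $\e$-prefactor and by the correct rescaling of the unknowns, so that the Gronwall constants are $\e$-independent and monotone in $l_v$. I would also need the elementary but slightly annoying estimates on derivatives of $z\mapsto z/|z|$, uniform on the fixed annulus where $|z_\e|$ lives, and on $D_x^k(n_{\Gamma_\e}\circ P_{\Gamma_\e})$, which follow from the $\e$-scaling of $\Lambda_\e$ and the $C^2$-regularity of $\Gamma$; these are where I would invoke \cref{weingarten} and the tubular-neighborhood structure set up in Section~\ref{section:setting}.
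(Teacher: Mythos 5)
Your proposal follows essentially the same route as the paper: Picard–Lindelöf applied pointwise in $x$ with the exponential two-sided bound on $|z_\e|$ keeping the trajectory away from $\{z=0\}$, then differentiating the system to get $\partial_tDw_\e=\e A_\e(t,w_\e)Dw_\e$ with the Weingarten-induced $O(1/\e)$ initial datum for $Dz_\e$, and Gronwall estimates on the $\e$-rescaled unknowns (with the integral representation of $Dy_\e$ extracting the extra factor $l_v$), and the analogous once-more-differentiated argument for the second derivatives. This is exactly the bookkeeping the paper carries out (with the second-derivative details delegated to \cite[Lemma 6.6]{E18}), so the proposal is correct in approach and structure.
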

\begin{proof}
\emph{(i) Existence and Uniqueness.} Due to the embedding $W^{k,\infty}(U_{\Gamma_\e})=C^{k-1,1}(U_{\Gamma_\e})$ ($k\geq1$) (we refer to \cite[Theorem 7]{HKT08}) we have $v_\e,\partial_j v_\e\in C^{1,1}(S\times U_{\Gamma_\e})$ ($j=1,2,3$) which, in turn, implies $f_\e\in C^{1,1}\left(S\times\left(\R^3\times K\right)\right)$ for every compact set $K\subset\R^3\setminus\{0\}$.
Therefore, for every $x\in\Omega$, \emph{Picard-Lindeloef's existence theorem} (\cite[Proposition 1.8]{Z86}) guarantees the existence of a time $t_\e(x)\in S$ and a unique solution $w_\e(\cdot,x)=(y_\e(\cdot,x), z_\e(\cdot,x))^T\in C^{1,1}([0,t_\e(x)])^6$.
Note that $|z_\e(0,x)|=1$ independently of $x\in U_{\Gamma_\e}$.
Taking a look at \cref{s:ode:2}, we see that
$$
-\e t l_v\leq\int_0^t\frac{\partial_t(z_\e\cdot e_j)}{|z_\e|}\di{\tau}\leq\e tl_v\quad (j=1,2,3).
$$
The norm of every solution $z_\e$ is therefore bounded from below and above via
\begin{equation}\label[in]{est_zeps}
e^{-\e l_vt}\leq |z_\e(t,x)|\leq e^{\e l_v t}.
\end{equation}
As a consequence, a blow up due to $|z_\e|\to0$ is not possible in finite time and we can extend to $w_\e(\cdot,x)\in C^{1,1}(\overline{S})^6$ for $x\in U_{\Gamma_\e}$.
\\[-0.2cm]

\emph{(ii) Regularity and Estimates.} For any $x_1,x_2\in U_{\Gamma_\e}$, we find that
$$
w_\e(t,x_1)-w_\e(t,x_2)=g_\e(x_1)-g_\e(x_2)+\int_0^tf_\e(\tau,w_\e(\tau,x_1))-f_\e(\tau,w_\e(\tau,x_1))\di{\tau}.
$$
From $g_\e\in C^2(U_{\Gamma_\e})$, the Lipschitz continuity of $f_\e$ as well as $Df_\e$, and Gronwall's inequality, we can infer $w_\e(t,\cdot)\in W^{(1,2),\infty}(S\times U_{\Gamma_\e})^6$.

In the following, let $\e>0$ be sufficiently small such that $\nicefrac{1}{\sqrt{2}}\leq\|z_\e\|_\infty\leq\sqrt{2}$ (cf.~\cref{est_zeps}).
Differentiating the ODE with respect to $x\in U_{\Gamma_\e}$, we get
\begin{align}\label{eq:diffa}
\partial_tDw_\e(t,x)=\e D_x\left(f_\e\left(t,w_\e(t,x)\right)\right).
\end{align}
We define $A_\e\colon S\times\left(\R^3\times\R^3\setminus\{0\}\right)\to\R^{6\times6}$ via
\begin{align*}
A_\e(t,(y,z)):=D_{(y,z)}f_\e\left(t,(y,z)\right)
&=\begin{pmatrix} 
\frac{z}{|z|}\otimes \nabla v_\e(t,y) &v_\e B(z) \\
|z|D^2v_\e(t,y)&\nabla v_\e(t,y)\otimes\frac{z}{|z|}\end{pmatrix},
\end{align*}
where $B\colon\R^3\setminus\{0\}\to\R^{3\times3}$ is given via
\begin{equation}\label{eq:def:B}
B(z)=D\left(z\mapsto\frac{z}{|z|}\right)=
\frac{1}{|z|^{3}}\begin{pmatrix}
z_2^2+z_3^2& -z_1z_2&-z_1z_3\\
-z_1z_2& z_1^2+z_3^2&-z_2z_3\\
-z_1z_3& -z_2z_3&z_1^2+z_2^2
\end{pmatrix}.
\end{equation}
\Cref{eq:diffa} can be rewritten into
\begin{equation}\label{mi:eq:diffa:alter}
\partial_tDw_\e(t,x)=\e A_\e(t,w_\e(t,x))Dw_\e(t,x).
\end{equation}
With the estimate $\left\|B(z)\right\|\leq\nicefrac{\sqrt{2}}{|z|}$ (Frobenius-Norm), the estimate for $z_\e$ given by \cref{est_zeps}, and Assumption (A1), we get (for sufficiently small $\e$)
\begin{equation}\label[in]{est_Aeps}
\e|A_\e(t,(y_\e,z_\e)|\leq l_v(3\e+\sqrt{2})\leq 2l_v.
\end{equation}
For the initial values of the \emph{Jacobian} matrices, we have (for the derivative of $n_{\Gamma_\e}(P_{\Gamma_\e}(x))$, we refer to \cite[Chapter 2, Section 3.1]{PS16})
\begin{align*}
Dy_\e(0,x)&=\mathds{I}_3,\\
Dz_\e(0,x)&=D\left(n_{\Gamma_\e}(P_{\Gamma_\e}(x))\right)=-L_{\Gamma_\e}(P_{\Gamma_\e}(x))\left(\mathds{I}-d_{\Gamma_\e}(x)L_{\Gamma_\e}(P_{\Gamma_\e}(x))\right)^{-1}.
\end{align*}
As $|Dz_\e(0,x)|\leq\nicefrac{C}{\e}$ for some $C>0$, we can deduce estimate via \emph{Gronwall's inequality} that
\begin{align}\label[in]{ie:D_xw}
\e\left|Dw_\e(t,x)\right|\leq C\exp(2Tl_v)=:C_1(l_v).
\end{align}
For $y_\e$, we have
\begin{equation}\label{eq:xdery}
Dy_\e(t,x)
=\mathds{I}_3+\e\int_0^t\left(A_\e^{(11)}(t,w_\e(\tau,x))Dy_\e(\tau,x)+A_\e^{(12)}(t,w_\e(\tau,x))Dz_\e(\tau,x)\right)\di{\tau}.
\end{equation}
Inserting the estimate given in \cref{ie:D_xw} into \cref{eq:xdery}, we see that
\begin{align}\label[in]{ie:D_xy}
|Dy_\e(t,x)|\leq1+3TC_1(l_v)l_v.
\end{align}
Looking at \cref{eq:xdery} and using the estimate for $A_\e$ (cf.\,\cref{est_Aeps}), we get (for small $\e$)
\begin{align}\label[in]{ie:D_xty}
|\partial_tDy_\e(t,x)|&\leq3C_1(l_v)l_v.
\end{align}
Similarly, differentiating $Dw_\e$ with respect to $x_j$ ($j=1,2,3$) and estimating the different terms accordingly, we also get
\begin{align}\label[in]{ie:D_x2w}
\e^2|\partial_jDw_\e(t,x)|\leq C_2(l_v).
\end{align}
With this estimate, we can further bound $|\partial_{x_i}Dy_\e(t,x)|$ via
\begin{align}\label[in]{ie:D_x2y}
\left|\partial_{x_i}Dy_\e(t,x)\right|\leq l_vC_3(l_v).
\end{align}
The details regarding these calculations are given in~\cite[Lemma 6.6]{E18}.
Now, combining \cref{ie:D_xw,ie:D_xy,ie:D_xty,ie:D_x2w,ie:D_x2y}, the function $C_w$ can be directly constructed via $C_j(l_v)$ ($j=1,2,3$).
\end{proof}
Note that $l_vC_w(l_v)\to0$ for $l_v\to0$.
In the following lemma, we show that $y_\e(t,\cdot)$ is a homeomorphism (a minimal requirement for it to correspond to a meaningful transformation) for $t\in\overline{S}$ small enough.
Moreover, for small $l_v$, this holds for all $t\in\overline{S}$.
\begin{lemma}\label{mi:lem:tdelta}
There is a monotonically decreasing and continuous function $\delta\colon(0,\infty)\to(0,\infty)$ (we set $t_v=\min\{\delta(l_v),T\}$) such that:
\begin{itemize}
	\item[$(i)$] The function $y_\e(\tau,\cdot)\colon U_{\Gamma_\e}\to y_\e\left(\tau,U_{\Gamma_\e}\right)$ is a Lipschitz homeomorphism for all $\tau\in[0,t_v]$.
	\item[$(ii)$] For $t\in[0,t_v]$, let
		$$
		y_{\e,t}^{-1}\colon y_\e(t,U_{\Gamma_\e})\to U_{\Gamma_\e}
		$$
		be the unique function that satisfies $y_{\e,t}^{-1}(y_\e(t,x))=x$ for all $x\in U_{\Gamma_\e}$. The function 
		$$
		y_{\e}^{-1}\colon\bigcup_{t\in [0,t_v]}\big(\{t\}\times y_\e(t,U_{\Gamma_\e})\big)\to U_{\Gamma_\e},\quad y_{\e}^{-1}(t,w):=y_{\e,t}^{-1}(w)
		$$
		is Lipschitz continuous with respect to $t\in [0,t_v]$.
\end{itemize}
\end{lemma}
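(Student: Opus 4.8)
The plan is to establish the homeomorphism property and the Lipschitz estimate for $y_\e^{-1}$ by controlling the Jacobian $D_xy_\e$ via the estimates already available from \Cref{lemma:estimates:yz}, and then invoking a global inverse function theorem for Lipschitz maps. The key point is that $l_vC_w(l_v)\to0$ as $l_v\to0$, so that for times small enough (depending only on $l_v$, not on $\e$) the map $y_\e(t,\cdot)$ is a small Lipschitz perturbation of the identity on $U_{\Gamma_\e}$.

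First I would make the perturbation-of-identity structure explicit. From \cref{s:ode:1,s:ode:3} we have, for $x\in U_{\Gamma_\e}$,
\begin{equation*}
y_\e(t,x)=x-\e\int_0^t\frac{z_\e(\tau,x)}{|z_\e(\tau,x)|}v_\e(\tau,y_\e(\tau,x))\di{\tau},
\end{equation*}
so that $y_\e(t,\cdot)-\id$ is controlled in $W^{1,\infty}$ by $\e t l_v$ and by $\|D_xy_\e-\mathds{I}\|_\infty\leq l_vC_w(l_v)$. Write $y_\e(t,x)=x+b_\e(t,x)$ with $\|D_xb_\e(t,\cdot)\|_\infty\leq l_vC_w(l_v)=:\kappa(l_v)$; since $\kappa$ is continuous, monotonically increasing, and vanishes at $0$, there is a largest $\delta(l_v)>0$ (decreasing and continuous in $l_v$ — this is the function claimed in the statement, obtained by inverting $\kappa$ against the threshold $\tfrac12$, say) such that $\kappa(l_v)\leq\tfrac12$ whenever one restricts to $t\le\delta(l_v)$. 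Strictly speaking the time-dependence here is weaker than the $l_v$-dependence, but combining the two estimates $\|D_xb_\e(t,\cdot)\|\le\min\{C\e t l_v+\dots,\kappa(l_v)\}$ gives a genuinely decreasing $\delta$; I would fix $t_v=\min\{\delta(l_v),T\}$ accordingly. On $[0,t_v]$ one then has $\|D_xb_\e(t,\cdot)\|_\infty\le\tfrac12$, hence $y_\e(t,\cdot)$ is globally bi-Lipschitz onto its image: injectivity follows from
\begin{equation*}
|y_\e(t,x_1)-y_\e(t,x_2)|\ge|x_1-x_2|-|b_\e(t,x_1)-b_\e(t,x_2)|\ge\tfrac12|x_1-x_2|,
\end{equation*}
and continuity of the inverse is immediate from the same bound. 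This proves $(i)$, with the inverse $y_{\e,t}^{-1}$ itself $2$-Lipschitz in the space variable.

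For $(ii)$ I would estimate the time-increments of $y_\e^{-1}$. Fix $t,s\in[0,t_v]$ and $w$ in the common domain (restricting, if necessary, to $w\in y_\e(t,U_{\Gamma_\e})\cap y_\e(s,U_{\Gamma_\e})$; note the domains vary little since $\|b_\e\|_\infty\le C\e T l_v$). Set $x=y_{\e,t}^{-1}(w)$, $x'=y_{\e,s}^{-1}(w)$, so $y_\e(t,x)=y_\e(s,x')=w$. Then
\begin{equation*}
\tfrac12|x-x'|\le|y_\e(s,x)-y_\e(s,x')|=|y_\e(s,x)-y_\e(t,x)|\le\int_{\min\{s,t\}}^{\max\{s,t\}}|\partial_\tau y_\e(\tau,x)|\di{\tau}\le\e l_v|t-s|,
\end{equation*}
using $\|\partial_t y_\e\|_\infty\le\e l_v$ from \cref{s:ode:1} together with \cref{est_zeps} and (A1). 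Hence $|y_\e^{-1}(t,w)-y_\e^{-1}(s,w)|\le 2\e l_v|t-s|$, which gives the claimed Lipschitz continuity in $t$ (indeed with an $\e$-small constant). Combining with the spatial Lipschitz bound from $(i)$ yields joint Lipschitz continuity of $y_\e^{-1}$ on its domain.

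The main obstacle I anticipate is not any single estimate — each follows from \Cref{lemma:estimates:yz} and \cref{est_zeps} — but the bookkeeping needed to produce a single decreasing, continuous $\delta\colon(0,\infty)\to(0,\infty)$ that simultaneously (a) forces the Jacobian perturbation below $\tfrac12$ and (b) is genuinely independent of $\e$. One has to be careful that the time-threshold coming from the $\partial_t D_xy_\e$ bound and the $l_v$-threshold coming from $\|D_xy_\e-\mathds{I}\|_\infty\le l_vC_w(l_v)$ are reconciled into one monotone function, and that the slight $t$- and $\e$-dependence of the domains $y_\e(t,U_{\Gamma_\e})$ does not obstruct the definition of $y_\e^{-1}$ on the stated union. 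A secondary technical point is to make sure the image $y_\e(t,U_{\Gamma_\e})$ stays inside a fixed neighborhood where nothing degenerates, which again follows from $\|b_\e\|_\infty\le C\e T l_v$ being small.
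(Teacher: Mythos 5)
Your proposal is correct, and for part $(i)$ it is essentially the paper's own argument: both exploit that $y_\e(t,\cdot)$ is a Lipschitz-small perturbation of the identity, using the Jacobian representation behind \Cref{lemma:estimates:yz} (cf.\ \cref{eq:xdery}) to get a bound of the form $\|Dy_\e(t,\cdot)-\mathds{I}_3\|_\infty\leq 3t\,l_vC_1(l_v)$ and then choosing $\delta(l_v)\sim(l_vC_1(l_v))^{-1}$; note that the time-dependent coefficient is $t\,l_vC_1(l_v)$ and not $C\e t l_v$ as your ``min of two bounds'' remark suggests, since the $A_\e^{(12)}Dz_\e$ term only carries the factor $\e$ that is consumed by $|Dz_\e|\lesssim\e^{-1}$ — but your final reconciliation lands on the same $\e$-independent, decreasing, continuous $\delta$ as the paper, so this is only a bookkeeping slip. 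For part $(ii)$ you take a genuinely different route: the paper differentiates the identity $y_\e(t,y_\e^{-1}(t,x))=x$ implicitly in $t$ and bounds the resulting formula \cref{eq:Dtyinv} for $\partial_ty_\e^{-1}$, whereas you avoid any a priori differentiability of $y_\e^{-1}$ in time by the direct increment estimate combining the bi-Lipschitz lower bound from $(i)$ with $\|\partial_ty_\e\|_\infty\leq\e l_v$ (from \cref{s:ode:1} and \cref{est_zeps}), giving the Lipschitz constant $2\e l_v$. Your argument is more elementary and makes the domain issue ($w$ must lie in the slices for both times) explicit, which the paper glosses over; the paper's approach has the separate advantage that the explicit formula \cref{eq:Dtyinv} for $\partial_ty_\e^{-1}$ is reused later in the homogenization step (the Cauchy-sequence estimate for $\e^{-1}\decon{y_\e^{-1}-\mathrm{Id}}$ in \Cref{mi:sec:limit}), so it earns its keep beyond this lemma. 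Both routes yield the stated conclusions.
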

\begin{proof}
$(i)$. We recall the characterization of $Dy_\e$ established in the proof of the preceding lemma, i.e., \cref{eq:xdery}:
\begin{align*}
Dy_\e(t,x)=\mathds{I}_3+\e\int_0^t\left(A_\e^{(11)}(t,w_\e(\tau,x))Dy_\e(\tau,x)+A_\e^{(12)}(t,w_\e(\tau,x))Dz_\e(\tau,x)\right)\di{\tau}.
\end{align*}
From here, we conclude that
$$
\|Dy_\e(t,\cdot)-\mathds{I}_3\|_\infty\leq 3tl_vC_1(l_v)\quad\text{for all $t\in\overline{S}$}.
$$
This shows (employing the \emph{Neumann} series) that $y_\e(t,\cdot)\colon U_{\Gamma_\e}\to y_\e(t,U_{\Gamma_\e})$ is a Lipschitz homeomorphism for all $t\in[0,t_v]$ where $t_v=\min\{(4l_vC_1(l_v))^{-1},T\}$.
Here, the function $\delta$ is given via $(4l_vC_1(l_v))^{-1}$.
\\[-0.2cm]

$(ii)$. It holds $y_\e(t,y_{\e}^{-1}(t,x))=x$ for all $(t,x)\in\bigcup_{t\in[0,t_v]}\left(\{t\}\times y_\e(t,U_{\Gamma_\e})\right)$.
Implicit differentiation leads to
\begin{align*}
\partial_t\left(y_\e(t,y_{\e}^{-1}(t,x))\right)=\partial_ty_\e(t,y_{\e}^{-1}(t,x))+Dy_\e(t,y_{\e}^{-1}(t,x))\partial_ty_{\e}^{-1}(t,x)=0
\end{align*}
and, therefore, 
\begin{align}\label{eq:Dtyinv}
\partial_ty_{\e}^{-1}(t,x)&=-\left(Dy_\e(t,y_{\e}^{-1}(t,x))\right)^{-1}\partial_ty_\e(t,y_{\e}^{-1}(t,x))\notag\\
&=\e\left(Dy_\e(t,y_{\e}^{-1}(t,x))\right)^{-1}\frac{z_\e(t,y_{\e}^{-1}(t,x))}{|z_\e(t,y_{\e}^{-1}(t,x))|}v_\e(t,y_\e(t,y_{\e}^{-1}(t,x))).
\end{align}
As the right hand side is bounded by virtue of the estimates provided in \Cref{lemma:estimates:yz}, this implies Lipschitz continuity of $y_{\e}^{-1}$ with respect to $t\in[0,t_v]$.
\end{proof}

With the following lemma, we show that any solution of the motion problem given by \cref{eq:level_set_1,eq:level_set_2,eq:level_set_3,eq:level_set_4,eq:level_set_5} can be characterized via $y_\e$ and that, indeed, there is a unique solution to the motion problem.
\begin{lemma}\label{mi:lem:solfbp}
\begin{itemize}
\item[$(i)$] Let $\{\Gamma_\e(t)\}_{t\in[0,t_v]}$ be a solution of the free boundary problem given by \cref{eq:level_set_1,eq:level_set_2,eq:level_set_3,eq:level_set_4,eq:level_set_5}.
Then, for all $t\in[0,t_v]$, $\Gamma_\e(t)=y_\e(t,\Gamma_\e)$.
\item[$(ii)$] There is a unique solution to the motion problem posed in the time interval $[0,t_v]$.
\end{itemize}
\end{lemma}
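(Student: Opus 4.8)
The plan is to prove part $(i)$ by a direct level-set computation, and then deduce part $(ii)$ from part $(i)$ together with the well-posedness of the ODE system established in \Cref{lemma:estimates:yz}. For part $(i)$, suppose $\p_\e$ is a level set function whose zero set is the solution $\{\Gamma_\e(t)\}$. The key idea is to follow the trajectories $t\mapsto y_\e(t,\gamma)$ for $\gamma\in\Gamma_\e$ and show that $t\mapsto\p_\e(t,y_\e(t,\gamma))$ stays zero. First I would compute $\ddt\p_\e(t,y_\e(t,\gamma))=\partial_t\p_\e(t,y_\e(t,\gamma))+\nabla\p_\e(t,y_\e(t,\gamma))\cdot\partial_ty_\e(t,\gamma)$. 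Using \cref{s:ode:1}, $\partial_ty_\e=-\e\frac{z_\e}{|z_\e|}v_\e(t,y_\e)$, and using the structure of the level set equation \eqref{eq:level_set_1} together with the technical condition \eqref{eq:level_set_3}, one rewrites $\partial_t\p_\e=\e|\nabla\p_\e|v_\e+\p_\e\,\psi$ for some bounded $\psi$. The crucial geometric input is that $z_\e(t,\cdot)$ tracks the normal direction: by construction \cref{s:ode:4}, $z_\e(0,\gamma)=-n_{\Gamma_\e}(\gamma)$ is antiparallel to $\nabla\p_\e(0,\gamma)/|\nabla\p_\e(0,\gamma)|$ (up to sign, since $\p_\e<0$ inside), and \cref{s:ode:2} is precisely the transport equation that preserves this collinearity along trajectories — this is the standard fact that the (unnormalized) normal to a level set evolves by $\partial_tz=|z|\nabla V$ under a normal-velocity flow $V$. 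So on $\Xi_\e$ one has $\frac{z_\e}{|z_\e|}=-\frac{\nabla\p_\e}{|\nabla\p_\e|}$, whence $\nabla\p_\e\cdot\partial_ty_\e=\e|\nabla\p_\e|v_\e$, and the two terms cancel, leaving $\ddt\p_\e(t,y_\e(t,\gamma))=\p_\e(t,y_\e(t,\gamma))\,\psi(t,y_\e(t,\gamma))$. Since $\p_\e(0,\gamma)=0$, Gronwall's inequality forces $\p_\e(t,y_\e(t,\gamma))=0$ for all $t\in[0,t_v]$, hence $y_\e(t,\Gamma_\e)\subseteq\Gamma_\e(t)$; the reverse inclusion follows because $y_\e(t,\cdot)$ is a homeomorphism onto its image by \Cref{mi:lem:tdelta}$(i)$ and a dimension/connectedness argument (or by running the same computation backwards along the inverse flow $y_\e^{-1}$ from \Cref{mi:lem:tdelta}$(ii)$).

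For part $(ii)$, existence is immediate: define $\Gamma_\e(t):=y_\e(t,\Gamma_\e)$ with $y_\e$ the unique ODE solution from \Cref{lemma:estimates:yz}, and one checks this family satisfies \eqref{eq:level_set_1}--\eqref{eq:level_set_5}, e.g.\ by exhibiting an explicit level set function such as $\p_\e(t,x):=d_{\Gamma_\e}\bigl(y_\e^{-1}(t,x)\bigr)$ near the interface (extended suitably, using $\mathrm{supp}\,v_\e\subset U_{\Gamma_\e}$), whose regularity and nondegeneracy on $\Xi_\e$ come from the estimates $\|D_xy_\e-\mathds{I}\|_\infty\leq l_vC_w(l_v)$ and the smallness for $t\le t_v$, and whose membership in the required spaces (including condition \eqref{eq:level_set_3}) is verified from the $W^{(1,2),\infty}$ regularity of $y_\e$ and the smoothness of $d_{\Gamma_\e}$, $P_{\Gamma_\e}$. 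Uniqueness is exactly part $(i)$: any solution $\{\Gamma_\e(t)\}$ must equal $\{y_\e(t,\Gamma_\e)\}$, which depends only on the data $v_\e$ through the ODE, not on the choice of level set representative.

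The main obstacle I expect is part $(i)$, specifically making rigorous the claim that $z_\e/|z_\e|$ coincides with $\mp\nabla\p_\e/|\nabla\p_\e|$ on $\Xi_\e$. The ODE \cref{s:ode:2} is imposed only on $S\times U_{\Gamma_\e}$ for the trajectory variable, not pointwise along $\Gamma_\e(t)$ a priori, so one has to be careful: the natural move is to show that the vector field $\tilde z(t,x):=z_\e(t,y_\e^{-1}(t,x))$, defined on a neighborhood of $\Gamma_\e(t)$, is a scalar multiple of $\nabla\p_\e(t,x)$ there. This reduces to verifying that both $\tilde z$ and $\nabla\p_\e$ solve the same linear transport equation along the flow with the same initial condition up to scaling — differentiating $\nabla\p_\e$ along trajectories using \eqref{eq:level_set_1} and the chain rule produces exactly the term $|\nabla\p_\e|\nabla v_\e$ plus lower-order terms proportional to $\p_\e$ (which vanish on the interface), matching \cref{s:ode:2}. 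The technical condition \eqref{eq:level_set_3} is what guarantees the "lower-order terms proportional to $\p_\e$" are genuinely controllable (bounded with bounded spatial gradient), which is precisely the point flagged in the remark after the motion problem statement that \eqref{eq:level_set_3} "is needed in \Cref{mi:lem:solfbp}." Once this collinearity is in hand, the Gronwall cancellation argument is routine, and the rest of the lemma follows from the ODE theory already developed.
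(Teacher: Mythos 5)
Your part (ii) is essentially the paper's own argument: the paper also builds an explicit level set function from the flow, namely $\widetilde{\p}_\e(t,x)=-l$ on $y_\e(t,\Gamma_\e^{(l)})$, which is exactly (up to sign) your candidate $d_{\Gamma_\e}\bigl(y_{\e}^{-1}(t,x)\bigr)$, extended by constants outside $y_\e(t,U_{\Gamma_\e})$; it then shows $\nabla\widetilde{\p}_\e=z_\e\circ y_\e^{-1}$ and \cref{eq:motion_problem}, composes with a cut-off $g$ to get $\p_\e=\e g\circ(\e^{-1}\widetilde{\p}_\e)$, verifies \eqref{eq:level_set_1}--\eqref{eq:level_set_5}, and obtains uniqueness of the family of interfaces from part (i). (Only note that your $+d_{\Gamma_\e}$ has the wrong sign relative to \eqref{eq:level_set_5}.) For part (i), however, the paper gives no argument at all: it cites \cite[Lemma 3.2]{Ch92} (method of characteristics), i.e.\ precisely the statement you attempt to reprove, so your sketch there is the part that has to stand on its own -- and it does not yet.

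Two concrete problems. First, a sign slip that breaks your central cancellation: since $\Omega_\e^{(1)}=\{\p_\e(0,\cdot)<0\}$, the gradient $\nabla\p_\e$ points into $\Omega_\e^{(2)}$, i.e.\ along $-n_{\Gamma_\e}$, so $z_\e(0)=-n_{\Gamma_\e}(P_{\Gamma_\e})$ is \emph{positively} collinear with $\nabla\p_\e(0)$; with the antiparallel relation $z_\e/|z_\e|=-\nabla\p_\e/|\nabla\p_\e|$ you assert, one gets $\nabla\p_\e\cdot\partial_t y_\e=+\e|\nabla\p_\e|v_\e$ and hence $\ddt\,\p_\e(t,y_\e)=2\e|\nabla\p_\e|v_\e+\p_\e\psi$, which does not cancel; the Gronwall argument needs the $+$ orientation. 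Second, and more substantively, the collinearity propagation you describe is not justified at the assumed regularity: writing $\ddt\,\p_\e(t,y_\e(t,\gamma))=\psi\,\p_\e(t,y_\e)+\e v_\e\bigl(|\nabla\p_\e|-\nabla\p_\e\cdot\tfrac{z_\e}{|z_\e|}\bigr)$, the misalignment term is not controlled by $\p_\e(t,y_\e)$, so you must show it vanishes, and your proposed route -- differentiating $\nabla\p_\e$ along trajectories -- requires second spatial derivatives of $\p_\e$ (both through $\nabla|\nabla\p_\e|$ and through the chain-rule term $D^2\p_\e\,\partial_t y_\e$), whereas a solution of the motion problem is only assumed to be $C^1$ with the defect quotient in $W^{(0,1),\infty}$; moreover \eqref{eq:level_set_1} holds only on $\Xi_\e$, so the computation must run through the extension provided by \eqref{eq:level_set_3} everywhere. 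Unless you either restrict to level set functions with (a.e.) bounded second derivatives or find an argument avoiding $D^2\p_\e$, part (i) remains exactly the content the paper outsources to \cite[Lemma 3.2]{Ch92}, and your proof of uniqueness in (ii), which rests on (i), inherits this gap.
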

\begin{proof}
$(i)$. This is shown in \cite[Lemma 3.2]{Ch92} using the method of characteristics.

$(ii)$. This proof follows closely along the lines of \cite[Theorem 3.1]{Ch92} adapting the ideas to our setting.
We introduce a Lipschitz continuous function $\widetilde{\p}_\e\colon [0,t_v]\times \Omega\to[-\e a,\e a]$ via (as a reminder: $\Gamma_\e^{(l)}=\left\{\Lambda_\e(\gamma,l)\ : \ \gamma\in\Gamma_\e\right\}$, see \cref{mi:eq:faminter})
$$
\widetilde{\p}_\e(t,x)=
	\begin{cases} 
	-\e a, \quad &x\in \Omega^{(1)}_\e\setminus y_\e(t,U_{\Gamma_\e})\\ 
	-l	,\quad &x\in y_\e(t,\Gamma_\e^{(l)})\ \text{for some}\ l\in(-\e a,\e a)\\ 
	\e a,\quad &y\in \Omega^{(2)}_\e\setminus y_\e(t,U_{\Gamma_\e})
	\end{cases}.
$$
In the same way as in \cite[Theorem 3.1]{Ch92}, it can be shown that
$$
\nabla\widetilde{\p}_\e(t,x)=z_\e(t,y_\e^{-1}(t,x))\quad\text{for all}\ x\in y_\e(t,U_{\Gamma_\e}),\ t\in[0,t_v]
$$
and, therefore,
\begin{align}\label[in]{phi_inv}
e^{\e l_vt}\geq|\nabla\widetilde{\p}_\e(t,x)|\geq e^{-\e l_vt}\quad\text{for all}\ x\in y_\e(t,U_{\Gamma_\e}),\ t\in[0,t_v]
\end{align}
as well as
\begin{align}\label{eq:motion_problem}
\partial_t\widetilde{\p}_\e(t,y)=\e|\nabla\widetilde{\p}_\e(t,y)|v_\e(t,y)\quad\text{in}\ \bigcup_{t\in[0,t_v]}\left(\{t\}\times y_\e(t,U_{\Gamma_\e})\right).
\end{align}
Due to the Lipschitz continuity of the involved derivatives, we get
$$
\widetilde{\p}_\e\in W^{(2,2),\infty}\left(\bigcup_{t\in[0,t_v]}\left(\{t\}\times y_\e(t,U_{\Gamma_\e})\right)\right).
$$
Now, let $g\colon\R\to[0,1]$ be a $C^2$-function such that $g(0)=0$, $g'(0)=1$, $g'(r)=0$ if $r\notin(-\nicefrac{a}{2},\nicefrac{a}{2})$, and $|g''|\leq\nicefrac{3}{a}$.
We introduce $\p_\e=\e g\circ\left(\e^{-1}\widetilde{\p}_\e\right)\in W^{(2,2),\infty}([0,t_v]\times\Omega)$.
Then, $\p_\e=0$ if and only if $\widetilde{\p}_\e=0$ which implies
$$
\Gamma_\e=\{x\in\Omega \ : \ \p_\e(0,x)=0\}.
$$
and
$$
\{x\in\Omega \ : \ \p_\e(t,x)=0\}=y_\e(t,\Gamma_\e) \quad \text{for all}\ t\in[0,t_v].
$$
It then can easily be checked that $\p_\e$ satisfies the conditions of the motion problem given by \cref{eq:level_set_1,eq:level_set_2,eq:level_set_3,eq:level_set_4,eq:level_set_5}.
\end{proof}

\begin{lemma}\label{lemma:estimate_phi}
There is a continuous function $C_\p\colon[0,\infty)\to(0,\infty)$ such that
\begin{align*}
\e^{-1}\|\partial_t\widetilde{\p}_\e\|_\infty+\|\partial_t\nabla\widetilde{\p}_\e\|_\infty&\leq l_vC_\p(l_v),\\
\|\nabla\widetilde{\p}_\e\|_\infty+\e\|D^2\widetilde{\p}_\e\|_\infty&\leq C_\p(l_v).
\end{align*}
\end{lemma}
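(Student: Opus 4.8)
The plan is to read all the required bounds directly off the representation established in the proof of \Cref{mi:lem:solfbp}, namely the pointwise identities
$$
\nabla\widetilde{\p}_\e(t,x)=z_\e(t,y_\e^{-1}(t,x)),\qquad
\partial_t\widetilde{\p}_\e(t,x)=\e|\nabla\widetilde{\p}_\e(t,x)|\,v_\e(t,x),
$$
valid on $\bigcup_{t\in[0,t_v]}(\{t\}\times y_\e(t,U_{\Gamma_\e}))$, together with the a priori estimates on $y_\e$, $z_\e$ collected in \Cref{lemma:estimates:yz} and the lower/upper bounds \cref{est_zeps} and \cref{phi_inv}. Outside $y_\e(t,U_{\Gamma_\e})$ the function $\widetilde{\p}_\e$ is locally constant ($\pm\e a$), so all derivatives vanish there and it suffices to work inside the tube.

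First I would bound $\|\nabla\widetilde{\p}_\e\|_\infty$: since $\nabla\widetilde{\p}_\e=z_\e\circ y_\e^{-1}$, the estimate \cref{est_zeps} gives $|\nabla\widetilde{\p}_\e|\le e^{\e l_v t}\le e^{T l_v}$ on $[0,t_v]$, which is of the form $C_\p(l_v)$. Next, $\|\partial_t\widetilde{\p}_\e\|_\infty\le\e\|\nabla\widetilde{\p}_\e\|_\infty\|v_\e\|_\infty\le\e\, l_v\, e^{Tl_v}$ by (A1), giving the $\e^{-1}\|\partial_t\widetilde{\p}_\e\|_\infty\le l_v C_\p(l_v)$ bound. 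For $\|D^2\widetilde{\p}_\e\|_\infty$ I would differentiate the identity $\nabla\widetilde{\p}_\e=z_\e\circ y_\e^{-1}$ in $x$, obtaining $D^2\widetilde{\p}_\e(t,x)=D_xz_\e(t,y_\e^{-1}(t,x))\,D_x y_\e^{-1}(t,x)$; here $\|D_xz_\e\|_\infty\le C_w(l_v)/\e$ by \Cref{lemma:estimates:yz}, while $D_x y_\e^{-1}=(D_x y_\e)^{-1}\circ y_\e^{-1}$ is bounded uniformly (its norm is $\le 2$ by the Neumann-series argument in \Cref{mi:lem:tdelta}, since $\|Dy_\e-\mathds{I}\|_\infty\le l_v C_w(l_v)$ is small after shrinking $\delta$ if necessary). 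Hence $\e\|D^2\widetilde{\p}_\e\|_\infty\le C_w(l_v)\cdot 2=:C_\p(l_v)$. For $\|\partial_t\nabla\widetilde{\p}_\e\|_\infty$ I would use the chain rule on $z_\e\circ y_\e^{-1}$: $\partial_t\nabla\widetilde{\p}_\e=(\partial_t z_\e)\circ y_\e^{-1}+ (D_x z_\e\circ y_\e^{-1})\,\partial_t y_\e^{-1}$. The first term equals $\e|z_\e|\nabla v_\e\circ y_\e^{-1}$ by \cref{s:ode:2}, bounded by $\e\sqrt2\, l_v$; for the second term, $\partial_t y_\e^{-1}$ is $O(\e l_v)$ by \cref{eq:Dtyinv} and $D_x z_\e=O(1/\e)$, so the product is $O(l_v)$. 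Collecting, $\|\partial_t\nabla\widetilde{\p}_\e\|_\infty\le l_v C_\p(l_v)$, and $C_\p$ can be taken as the maximum of the finitely many monotone continuous $\e$-independent functions produced along the way.

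The only genuine subtlety — and the step I expect to require the most care — is the regularity needed to justify these chain-rule manipulations: a priori $\widetilde{\p}_\e$ is only Lipschitz, and the identities for $\nabla\widetilde{\p}_\e$ and $\partial_t\widetilde{\p}_\e$ hold pointwise a.e. inside the tube. However, \Cref{mi:lem:solfbp} already records that $\widetilde{\p}_\e\in W^{(2,2),\infty}$ on $\bigcup_{t}(\{t\}\times y_\e(t,U_{\Gamma_\e}))$, so second spatial derivatives and mixed $\partial_t\nabla$ derivatives exist in $L^\infty$ and the differentiations above are legitimate as identities between $L^\infty$ functions; the potential jump of $\nabla\widetilde{\p}_\e$ across $\partial y_\e(t,U_{\Gamma_\e})$ is harmless because $v_\e$ (hence $\nabla v_\e$, $D^2 v_\e$) vanishes near that boundary by $\mathrm{supp}\,v_\e\subset U_{\Gamma_\e}$, so $\widetilde{\p}_\e$ is actually $C^1$ across it. One must also keep track that every constant is monotone in $l_v$ and independent of $\e$, and that the powers of $\e$ in the claimed inequalities exactly match the powers appearing in \Cref{lemma:estimates:yz}; this bookkeeping is routine but is where an error would most plausibly creep in.
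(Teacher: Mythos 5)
Your proposal is correct in substance and follows essentially the same route as the paper: all four bounds are read off from the identities $\nabla\widetilde{\p}_\e=z_\e\circ y_\e^{-1}$ and \cref{eq:motion_problem} together with the estimates of \Cref{lemma:estimates:yz} (the paper obtains $\partial_t\nabla\widetilde{\p}_\e$ by differentiating \cref{eq:motion_problem} in $x$ rather than differentiating $z_\e\circ y_\e^{-1}$ in $t$ via \cref{s:ode:2} and \cref{eq:Dtyinv}, but the ingredients and the resulting bounds are the same). One caveat: your side remark that $\widetilde{\p}_\e$ is $C^1$ across $\partial y_\e(t,U_{\Gamma_\e})$ is not true --- there $|\nabla\widetilde{\p}_\e|$ jumps from a value close to $1$ (by \cref{phi_inv}) to $0$, since $\widetilde{\p}_\e$ is constant outside the tube; as in the paper's own proof, the stated norms are simply to be understood as bounds on the (a.e.\ defined) derivatives inside $y_\e(t,U_{\Gamma_\e})$, the global smoothness issue being handled later by the cut-off $g$ in the definition of $\p_\e$.
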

\begin{proof}
In this proof, we rely on the estimates provided in \Cref{lemma:estimates:yz}.
Let $t\in[0,t_v]$ and $x\in y_\e(t,U_{\Gamma_\e})$.	
The second spatial derivative is given as
$$
D^2\widetilde{\p}_\e(t,x)=(Dy_\e(t,y_\e^{-1}(t,x)))^{-1}Dz_\e(t,y_\e^{-1}(t,x))
$$
and can therefore be estimated via
$$
\left|D^2\widetilde{\p}_\e(t,x)\right|\leq\frac{4}{\e}C_w(l_v)
$$
where $C_w$ is the function given by \Cref{lemma:estimates:yz}.
Furthermore, as $\widetilde{\p}_\e$ satisfies \cref{phi_inv} and \cref{eq:motion_problem}, we can estimate
$$
|\partial_t\widetilde{\p}_\e(t,x)|\leq\e|\nabla\widetilde{\p}_\e(t,x)||v_\e(t,x)|\leq \e e^{\e l_vt}l_v.
$$
Taking the derivative with respect to $x\in y_\e(t,U_{\Gamma_\e})$ in \cref{eq:motion_problem}, we get
$$
\partial_t\nabla\widetilde{\p}_\e(t,x)=\e|\nabla\widetilde{\p}_\e(t,x)|\nabla v_\e(t,x)+\e D^2\widetilde{\p}_\e(t,x)\frac{\nabla\widetilde{\p}_\e(t,x)}{|\nabla\widetilde{\p}_\e(t,x)|}|v_\e(t,x)|
$$
and find the upper bound
$$
|\partial_t\nabla\widetilde{\p}_\e(t,x)|\leq l_v\left(\e_0 e^{\e_0 l_vt_v}+4C_w(l_v)\right).
$$
\end{proof}

\subsection{Motion function}\label{mi:ssec:motion}
For $\e>0$ and $\gamma\in\Gamma_\e$, we introduce the function $F_{\e,\gamma}\colon[0,t_v]\times(-\e a,\e a)\to\R$ via $F_{\e,\gamma}(t,r)=\p_\e(t,\Lambda_\e(\gamma,r))$.
Then, $F_{\e,\gamma}(0,0)=\p_\e(0,\Lambda_\e(\gamma,0))=0$ for all $\gamma\in\Gamma_\e$.

\begin{lemma}\label{lemma_estimate_Fegamma}
For all $\e>0$ and $\gamma\in\Gamma_\e$, it holds $\partial_2F_{\e,\gamma}(0,0)=-1$.
Furthermore, there are $\widetilde{t}_v\in[0,t_v]$ and $0<R_v<a$ such that $\partial_2F_{\e,\gamma}(t,r)\leq-\nicefrac{1}{3}$ for all $t\in[0,\widetilde{t}_v]$ and $r\in[-\e R_v,\e R_v]$.
\end{lemma}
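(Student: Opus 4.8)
The plan is to compute $\partial_2 F_{\e,\gamma}(0,0)$ explicitly, and then obtain the uniform bound $\partial_2 F_{\e,\gamma}(t,r)\le -\tfrac13$ on a small parameter-independent space-time region by a continuity argument based on the estimates from \Cref{lemma:estimates:yz} and \Cref{lemma:estimate_phi}. First I would unfold the definition: since $F_{\e,\gamma}(t,r)=\p_\e(t,\Lambda_\e(\gamma,r))$ and $\Lambda_\e(\gamma,r)=\gamma+rn_{\Gamma_\e}(\gamma)$, the chain rule gives
\begin{equation*}
\partial_2 F_{\e,\gamma}(t,r)=\nabla\p_\e(t,\Lambda_\e(\gamma,r))\cdot n_{\Gamma_\e}(\gamma).
\end{equation*}
Recall that $\p_\e=\e\,g\circ(\e^{-1}\widetilde\p_\e)$ with $g(0)=0$, $g'(0)=1$, so $\nabla\p_\e=g'(\e^{-1}\widetilde\p_\e)\nabla\widetilde\p_\e$; at $t=0$ we have $\widetilde\p_\e(0,\cdot)=-d_{\Gamma_\e}$ on $U_{\Gamma_\e}$, hence $\widetilde\p_\e(0,\Lambda_\e(\gamma,0))=0$ and $\nabla\widetilde\p_\e(0,\gamma)=-\nabla d_{\Gamma_\e}(\gamma)=-n_{\Gamma_\e}(\gamma)$. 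Therefore $\partial_2 F_{\e,\gamma}(0,0)=g'(0)\,(-n_{\Gamma_\e}(\gamma))\cdot n_{\Gamma_\e}(\gamma)=-1$, using $|n_{\Gamma_\e}(\gamma)|=1$. (Equivalently, one can argue via $\nabla\widetilde\p_\e(0,x)=z_\e(0,y_\e^{-1}(0,x))=-n_{\Gamma_\e}(P_{\Gamma_\e}(x))$ from \Cref{mi:lem:solfbp} and its proof, which at $r=0$ and $t=0$ gives the same value $-1$ regardless of $\e$.)

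Next I would establish the uniform bound away from $(0,0)$. The key point is that $(t,r)\mapsto\partial_2 F_{\e,\gamma}(t,r)=g'(\e^{-1}\widetilde\p_\e(t,\Lambda_\e(\gamma,r)))\,\nabla\widetilde\p_\e(t,\Lambda_\e(\gamma,r))\cdot n_{\Gamma_\e}(\gamma)$ has a modulus of continuity in $(t,r)$ that is \emph{independent of $\e$ and $\gamma$}. Indeed, from \Cref{lemma:estimate_phi} we have $\|\partial_t\nabla\widetilde\p_\e\|_\infty\le l_v C_\p(l_v)$, $\|\partial_t\widetilde\p_\e\|_\infty\le \e\, l_v C_\p(l_v)$, and the second-derivative bound $\e\|D^2\widetilde\p_\e\|_\infty\le C_\p(l_v)$ controls the $r$-variation: moving a distance $|r|\le \e R$ in the normal direction changes $\nabla\widetilde\p_\e$ by at most $|r|\,\|D^2\widetilde\p_\e\|_\infty\le R\,C_\p(l_v)$ and changes $\e^{-1}\widetilde\p_\e$ by at most $\e^{-1}|r|\,\|\nabla\widetilde\p_\e\|_\infty\le R\,C_\p(l_v)$ (note the crucial cancellation of the $\e^{-1}$ against the $\e$-scale of the tube, which is why $r$ is measured on the scale $\e R_v$). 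Combining with $|g'|\le 1$ and the Lipschitz bound $|g''|\le 3/a$, one gets
\begin{equation*}
\bigl|\partial_2 F_{\e,\gamma}(t,r)-\partial_2 F_{\e,\gamma}(0,0)\bigr|\le \bigl(t + |r|/\e\bigr)\,\widetilde C(l_v)
\end{equation*}
for an $\e$-independent continuous $\widetilde C$. Choosing $\widetilde t_v\in[0,t_v]$ and $R_v\in(0,a)$ so small that $(\widetilde t_v + R_v)\widetilde C(l_v)\le \tfrac23$ then forces $\partial_2 F_{\e,\gamma}(t,r)\le -1+\tfrac23=-\tfrac13$ for all $t\in[0,\widetilde t_v]$, $r\in[-\e R_v,\e R_v]$, $\gamma\in\Gamma_\e$, $\e>0$.

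The main obstacle I expect is the bookkeeping of $\e$-scaling: one has to make sure every estimate is uniform in $\e$, and in particular that the composition with $g(\e^{-1}\cdot)$ does not introduce hidden $\e^{-1}$ blow-up — this works precisely because $r$ ranges over $[-\e R_v,\e R_v]$ and $\|D^2\widetilde\p_\e\|_\infty=O(\e^{-1})$, so the product is $O(1)$. A secondary technical point is that $\widetilde\p_\e$ (hence $F_{\e,\gamma}$) is a priori only Lipschitz/$W^{(2,2),\infty}$ rather than classically $C^2$, so the "derivative" statements should be read in the a.e. / Lipschitz sense and the continuity argument phrased accordingly; alternatively one works with the smoothed $\p_\e\in W^{(2,2),\infty}$ directly and uses that $\partial_2 F_{\e,\gamma}$ admits a continuous (indeed Lipschitz) representative on $[0,t_v]\times(-\e a,\e a)$ with the above modulus, which suffices to run the argument pointwise.
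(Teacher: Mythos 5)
Your proposal is correct and follows essentially the same route as the paper: the explicit value $\partial_2F_{\e,\gamma}(0,0)=-1$ via $\nabla\widetilde{\p}_\e(0,\cdot)=-n_{\Gamma_\e}(P_{\Gamma_\e}(\cdot))$, and then a perturbation bound in $(t,r)$ built from the estimates of \Cref{lemma:estimate_phi}, exploiting exactly the cancellation of $\|D^2\widetilde{\p}_\e\|_\infty,\;g''(\e^{-1}\cdot)=O(\e^{-1})$ against $|r|\leq\e R_v$. The paper phrases this as a fundamental-theorem-of-calculus expansion of $\partial_2F_{\e,\gamma}$ with explicit bounds on $\partial_2^2F_{\e,\gamma}$ and $\partial_t\partial_2F_{\e,\gamma}$, which is the same argument as your uniform modulus of continuity.
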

\begin{proof}
We calculate
\begin{align}\label{lemma_estimate_Fegamma_1}
\partial_2F_{\e,\gamma}(t,r)
&=g'(\e^{-1}\widetilde{\p}_\e(t,\Lambda_\e(\gamma,r)))\nabla\widetilde{\p}_\e(t,\Lambda_\e(\gamma,r))\cdot n_{\Gamma_\e}(\gamma)
\end{align}
and see that
$$
\partial_2F_{\e,\gamma}(0,0)=-1<0.
$$
For any $t\in[0,t_v]$ and $r\in(-\e a,\e a)$, we have
$$
\partial_2F_{\e,\gamma}(t,r)=-1+\int_0^r\partial_2^2F_{\e,\gamma}(0,s)\di{s}+\int_0^t\partial_t\partial_2F_{\e,\gamma}(\tau,r)\di{\tau}.
$$
Starting off with the first integrand, $\partial_2^2F_{\e,\gamma}$, we get
\begin{multline*}
\partial_2^2F_{\e,\gamma}(t,r)=\e^{-1}g''(\e^{-1}\widetilde{\p}_\e(t,\Lambda_\e(\gamma,r)))\left(\nabla\widetilde{\p}_\e(t,\Lambda_\e(\gamma,r))\cdot n_{\Gamma_\e}(\gamma)\right)^2\\
+D^2\widetilde{\p}_\e(t,\Lambda_\e(\gamma,r))n_{\Gamma_\e}(\gamma)\cdot n_{\Gamma_\e}(\gamma).
\end{multline*}
Using the estimates collected in \Cref{lemma:estimate_phi}, we can conclude that
$$
\e\left|\partial_2^2F_{\e,\gamma}(t,r)\right|\leq\frac{3}{a}e^{2\e l_vt}+C_\p(l_v).
$$
For the second integrand, $\partial_t\partial_2F_{\e,\gamma}$, we calculate
\begin{multline*}
\partial_t \partial_2F_{\e,\gamma}(t,r)
=\e^{-1}g''(\e^{-1}\widetilde{\p}_\e(t,\Lambda_\e(\gamma,r)))\partial_t\widetilde{\p}_\e(t,\Lambda_\e(\gamma,r))\nabla\widetilde{\p}_\e(t,\Lambda_\e(\gamma,r))\cdot n_{\Gamma_\e}(\gamma)\\
+g'(\e^{-1}\widetilde{\p}_\e(t,\Lambda_\e(\gamma,r)))\partial_t\nabla\widetilde{\p}_\e(t,\Lambda_\e(\gamma,r))\cdot n_{\Gamma_\e}(\gamma)
\end{multline*}
and estimate
\begin{align*}
\left|\partial_t \partial_2F_{\e,\gamma}(t,r)\right|\leq \frac{3}{a}l_vC_\p(l_v)\left(C_\p(l_v)+1\right)
\end{align*}
and finally arrive at
$$
\partial_2F_{\e,\gamma}(t,r)\leq-1+\frac{r}{\e}\left(\frac{3}{a}e^{2\e_0l_vt}+C_\p(l_v)\right)+tl_v\left(\frac{3}{a}C_\p(l_v)\left(C_\p(l_v)+1\right)\right).
$$
\end{proof}

\begin{theorem}[Height function]\label{mi:thm:height}
There is a time $T_v\in(0,\tau_v]$ monotonically decreasing with respect to $l_v$ and such that $T_v=T$ for $l_v$ sufficiently small such that:
\begin{itemize}
\item[$(i)$]
There is a height function $h_\e\colon\Gamma_\e\times[0,T_v]\to(-\e a,\e a)$ satisfying
$$
\Gamma_\e(t)=\{\Lambda_\e(\gamma, h_\e(t,\gamma)) \ : \ \gamma \in\Gamma_\e\} \quad\text{for all}\ t\in [0,T_v]
$$
\item[$(ii)$] It holds the estimate
$$
\frac{5}{\e a}\|h_\e\|_{L^\infty((0,T_v)\times\Gamma_\e)}+2\|\nabla_{\Gamma_\e} h_\e\|_{L^\infty((0,T_v)\times\Gamma_\e)}\leq\frac{1}{2}.
$$
Moreover, $\|\partial_th_\e\|_\infty\leq3\e l_vC_\p(l_v)$.
\end{itemize}
\end{theorem}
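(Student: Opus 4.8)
\emph{Strategy.} The plan is to realise $h_\e(t,\gamma)$ as the unique zero, near $r=0$, of the function $r\mapsto F_{\e,\gamma}(t,r)=\p_\e(t,\Lambda_\e(\gamma,r))$ introduced above, and to read off the $\e$-independent existence time directly from the quantitative estimates already in hand. First I would fix $\e>0$ and $\gamma\in\Gamma_\e$ and invoke \Cref{lemma_estimate_Fegamma}: on $[0,\widetilde t_v]\times[-\e R_v,\e R_v]$ one has $\partial_2F_{\e,\gamma}\le-\tfrac13$, so $F_{\e,\gamma}(t,\cdot)$ is strictly decreasing there and has at most one zero. For existence I would control the signs at the endpoints $r=\pm\e R_v$. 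Since $\p_\e=\e\,g(\e^{-1}\widetilde{\p}_\e)$ with $|g'|\le1$ and, by \Cref{lemma:estimate_phi}, $\|\partial_t\widetilde{\p}_\e\|_\infty\le\e l_vC_\p(l_v)$, we get $|\partial_tF_{\e,\gamma}|\le\e l_vC_\p(l_v)$; together with $F_{\e,\gamma}(0,0)=0$ and the decay in $r$ this yields $F_{\e,\gamma}(t,\e R_v)\le-\tfrac13\e R_v+t\e l_vC_\p(l_v)<0$ and, symmetrically, $F_{\e,\gamma}(t,-\e R_v)>0$, provided $t<R_v/(3l_vC_\p(l_v))$. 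The point is that the factor $\e$ cancels, so this is an $\e$-independent constraint on $t$. Defining $T_v$ as the minimum of $\widetilde t_v$, $T$, $R_v/(3l_vC_\p(l_v))$, and two further thresholds fixed below, the intermediate value theorem produces, for every $(t,\gamma)\in[0,T_v]\times\Gamma_\e$, a unique $h_\e(t,\gamma)\in(-\e R_v,\e R_v)\subset(-\e a,\e a)$ with $F_{\e,\gamma}(t,h_\e(t,\gamma))=0$.

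\emph{Regularity and the geometric identity $(i)$.} Since $\p_\e\in W^{(2,2),\infty}([0,t_v]\times\Omega)$, $\Lambda_\e$ is a $C^2$-diffeomorphism, and $\partial_2F_{\e,\gamma}\neq0$, the implicit function theorem shows that $h_\e$ is Lipschitz (indeed $C^1$) in $(t,\gamma)$ on $[0,T_v]\times\Gamma_\e$. To identify $\Gamma_\e(t)$, I would recall from \Cref{mi:lem:solfbp} that $\Gamma_\e(t)=\{x\in\Omega:\p_\e(t,x)=0\}=y_\e(t,\Gamma_\e)$, and that \cref{s:ode:1} with $\|v_\e\|_\infty\le l_v$ gives $|y_\e(t,\gamma)-\gamma|\le\e tl_v$; hence, after also requiring $T_vl_v<R_v$, the set $\Gamma_\e(t)$ lies in the tube $\{\Lambda_\e(\gamma,r):|r|<\e R_v\}$. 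Writing $x\in\Gamma_\e(t)$ as $x=\Lambda_\e(P_{\Gamma_\e}(x),d_{\Gamma_\e}(x))$, the relation $\p_\e(t,x)=0$ together with the uniqueness of the zero forces $d_{\Gamma_\e}(x)=h_\e(t,P_{\Gamma_\e}(x))$; conversely $\p_\e(t,\Lambda_\e(\gamma,h_\e(t,\gamma)))=0$. This gives $\Gamma_\e(t)=\{\Lambda_\e(\gamma,h_\e(t,\gamma)):\gamma\in\Gamma_\e\}$ for all $t\in[0,T_v]$.

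\emph{The estimates $(ii)$.} Implicit differentiation of $F_{\e,\gamma}(t,h_\e(t,\gamma))=0$ in $t$ gives $\partial_th_\e=-\partial_tF_{\e,\gamma}/\partial_2F_{\e,\gamma}$, whence $\|\partial_th_\e\|_\infty\le3\e l_vC_\p(l_v)$ from the bounds $|\partial_tF_{\e,\gamma}|\le\e l_vC_\p(l_v)$ and $\partial_2F_{\e,\gamma}\le-\tfrac13$. Since $F_{\e,\gamma}(0,0)=0$ and the zero is unique, $h_\e(0,\cdot)\equiv0$, hence $\|h_\e\|_\infty\le T_v\|\partial_th_\e\|_\infty\le3T_v\e l_vC_\p(l_v)$. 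Differentiating the identity in $\gamma$ and using $\nabla_\gamma\Lambda_\e(\gamma,r)=\mathds{I}-rL_{\Gamma_\e}(\gamma)$ on the tangent space, the Weingarten bound \cref{weingarten} (so $|\nabla_\gamma\Lambda_\e|\le\tfrac32$ for $|r|\le\e R_v$ with $R_v$ small), $\nabla\p_\e=g'(\e^{-1}\widetilde{\p}_\e)\nabla\widetilde{\p}_\e$, the bounds of \Cref{lemma:estimate_phi}, and the fact that $F_{\e,\gamma}(0,r)=\e\,g(-r/\e)$ is $\gamma$-independent (so $\nabla_\gamma F_{\e,\gamma}(0,\cdot)\equiv0$), one gets $|\nabla_\gamma F_{\e,\gamma}(t,r)|\le t\sup|\partial_t\nabla_\gamma F_{\e,\gamma}|\le CT_vl_vC_\p(l_v)\big(\tfrac3aC_\p(l_v)+1\big)$ and hence, up to a constant, the same bound for $\|\nabla_{\Gamma_\e}h_\e\|_\infty=\|\nabla_\gamma F_{\e,\gamma}/\partial_2F_{\e,\gamma}\|_\infty$. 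Both bounds have the form $T_vl_v$ times a continuous function of $l_v$, so fixing the remaining threshold in the definition of $T_v$ accordingly makes $\tfrac{5}{\e a}\|h_\e\|_\infty+2\|\nabla_{\Gamma_\e}h_\e\|_\infty\le\tfrac12$. Finally, every threshold entering $T_v$ is continuous and nonincreasing in $l_v$ and stays positive (or tends to $+\infty$) as $l_v\to0$, so $T_v$ is monotonically decreasing in $l_v$ and $T_v=T$ for $l_v$ small.

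\emph{Main obstacle.} The crux is the $\e$-uniformity of the existence time: a direct appeal to the implicit function theorem at fixed $\e$ would produce a time depending on $\e$ (and on $\gamma$), because the $r$-window on which $F_{\e,\gamma}(t,\cdot)$ is monotone has width only $\e R_v\to0$. What rescues the argument is the precise scaling in $\e$ carried by all the relevant quantities — the admissible $r$-interval scales like $\e$, $\partial_tF_{\e,\gamma}$ like $\e$, and $\partial_2^2F_{\e,\gamma}$ like $\e^{-1}$ — so that $\e$ cancels in every threshold and only $\e$-independent, $l_v$-controlled conditions remain; this cancellation is exactly what \Cref{lemma_estimate_Fegamma} and \Cref{lemma:estimate_phi} are engineered to make visible. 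A secondary, purely technical point is carrying the tangential calculus on the curved surface $\Gamma_\e$ (via $\nabla_\gamma\Lambda_\e=\mathds{I}-rL_{\Gamma_\e}$ and \cref{weingarten}) so that the gradient bound on $h_\e$ is uniform as well.
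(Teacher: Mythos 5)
Your proposal is correct and rests on the same skeleton as the paper's proof: the implicit relation $F_{\e,\gamma}(t,r)=\p_\e(t,\Lambda_\e(\gamma,r))=0$, the slope bound $\partial_2F_{\e,\gamma}\le-\nicefrac13$ from \Cref{lemma_estimate_Fegamma}, the bounds of \Cref{lemma:estimate_phi}, the identification $\Gamma_\e(t)=\{\p_\e(t,\cdot)=0\}=y_\e(t,\Gamma_\e)$ from \Cref{mi:lem:solfbp}, and the same implicit time-differentiation giving $\|\partial_th_\e\|_\infty\le3\e l_vC_\p(l_v)$. You deviate in two sub-steps, both legitimately. First, for the $\e$-uniform existence time you use strict monotonicity in $r$ plus an intermediate-value/endpoint-sign argument at $r=\pm\e R_v$ (where the $\e$'s cancel), whereas the paper applies the implicit function theorem and rules out a premature breakdown by a maximal-time contradiction, keeping the zero inside the window via $|h_{\e,\gamma}(t)|=|d_{\Gamma_\e}(y_\e(t,\gamma))|\le\e t l_v$; your version is arguably more elementary and also yields $h_\e(0,\cdot)\equiv0$ and the $L^\infty$ bound by integrating $\partial_th_\e$, while the paper gets the sharper $\|h_\e\|_\infty\le\e t l_v$ directly from \cref{s:ode:1}. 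Second, for the surface gradient you differentiate $F_{\e,\gamma}(t,h_\e(t,\gamma))=0$ tangentially, exploiting that $F_{\e,\gamma}(0,r)=\e g(-r/\e)$ is $\gamma$-independent so that $\nabla_\gamma F_{\e,\gamma}$ is controlled by a time integral of $\partial_t\nabla\p_\e$, together with $|\mathds{I}-rL_{\Gamma_\e}|$ bounded via \cref{weingarten}; the paper instead derives the explicit representation \eqref{eq:surface_gradient_h} comparing $n_{\Gamma_\e}(\gamma)$ with $n_{\Gamma_\e}(t,y_\e(t,\gamma))=\nabla\widetilde{\p}_\e/|\nabla\widetilde{\p}_\e|$. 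Both routes give bounds of the form $t\,l_v$ times a continuous function of $l_v$, which is all that part $(ii)$ and the choice of $T_v$ require; the payoff of the paper's formula \eqref{eq:surface_gradient_h} is that it is reused verbatim in the homogenization step (\Cref{lemma:trace_convergence2}), where the strong two-scale convergence of $\decon{\nabla_{\Gamma_\e}h_\e}$ is read off from it, so your shortcut would have to be supplemented there. Minor caveats, shared with the paper: you implicitly use $\|g'\|_\infty\le1$ (as does the paper) and the monotone dependence of the thresholds $\widetilde t_v$, $R_v$ on $l_v$, which is available from the final estimate in \Cref{lemma_estimate_Fegamma} but not spelled out in its statement.
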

\begin{proof}
$(i)$.
Note that $F_{\e,\gamma}(0,0)=0$ and $\partial_2F_{\e,\gamma}(0,0)=-1$.
By the \emph{Implicit Function Theorem}, we infer that, for every $\e>0$ and for every $\gamma\in\Gamma_\e$, there is a time $\tau_{\e,\gamma}>0$ and a differentiable function $h_{\e,\gamma}\colon[0,\tau_{\e,\gamma}]\to(-\e a,\e a)$ such that $F_{\e,\gamma}(t,h_{\e,\gamma}(t))=0$ for all $t\in[0,\tau_{\e,\gamma}]$.
Let $\tau_{\e,\gamma}\in\overline{S}$ always be the maximal possible point in time for this to be true.
%
It holds that
$$
\sup\{|h_{\e,\gamma}(t)|\ :\ \gamma\in\Gamma_\e\}=\sup\{|d_{\Gamma_\e}(y_\e(t))|\ : \ \gamma\in\Gamma_\e \}\leq\e tl_v\quad\text{for all}\ t\in[0,\tau_{\e,\gamma}],
$$
%
Here, the equality holds due to
$$
\Gamma_\e(t)=\{\Lambda_\e(\gamma, h_{\e,\gamma}(t)) \ : \ \gamma \in\Gamma_\e\}
=\{y_\e(t,\gamma) \ : \ \gamma \in\Gamma_\e\}.
$$
And, for the inequality, we observe that $y_\e(0,\gamma)\in\Gamma_\e$ and that $y_\e$ satisfies \cref{s:ode:1}.
Now, take $\tau_v=\min\{t_v,l_v^{-1}R_v\}$.
We claim that
$$
\inf\{\tau_{\e,\gamma}\ : \e>0,\ \gamma\in\Gamma_\e\}\geq \tau_v.
$$
Let us assume this is not the case, i.e., there are $\e>0$ and $\gamma\in\Gamma_\e$ such that $\tau_{\e,\gamma}<\tau_v$. 
Since 
\begin{align*}
&(i)\ F_{\e,\gamma}(\tau_{\e,\gamma},h_{\e,\gamma}(\tau_{\e,\gamma}))=0,\\
&(ii)\ \partial_2F_{\e,\gamma}(\tau_{\e,\gamma},h_{\e,\gamma}(\tau_{\e,\gamma}))<-\frac{1}{3},
\end{align*}
we can apply the \emph{Implicit Function Theorem} again which contradicts the assumption that $\tau_{\e,\gamma}$ is maximal.
Here, $(ii)$ holds true by virtue of \Cref{lemma_estimate_Fegamma}.
As a consequence, we are able to define $h_\e\colon[0,\tau_v]\times\Gamma_\e\to(-\e a,\e a)$ via $h_\e(t,\gamma):=h_{\e,\gamma}(t)$.
\\[-0.2cm]

$(ii)$. Owing to the regularity of $\Lambda_\e$ and $\p_\e$, we have $h_\e\in W^{2,\infty}((0,T)\times\Gamma_\e)$.
For all $t\in[0,\tau_v]$ and $\gamma\in\Gamma_\e$, we have $F_{\e,\gamma}(t,h_\e(t,\gamma))=0$ implying vanishing derivatives with respect to time and space.
Implicit differentiation with respect to time yields
\begin{align}\label{eq:timeh_e}
\partial_th_\e(t,\gamma)=-\frac{\partial_tF_{\e,\gamma}(t,h_\e(t,\gamma))}{\partial_2F_{\e,\gamma}(t,h_\e(t,\gamma))}.
\end{align}
Considering that $\|g'\|_\infty\leq1$, we are therefore led to
$$
|\partial_th_\e(t,\gamma)|\leq3\left|\partial_t\widetilde{\p}_\e(t,\Lambda_\e(\gamma,h_\e(t,\gamma)))\right|\leq3\e l_vC_\p(l_v).
$$
Let us first observe that $\nabla_{\Gamma_\e}h_\e(t,\gamma)=0$ if and only if
$$
n_{\Gamma_\e}(t,\Lambda_\e(\gamma,h_\e(t,\gamma))=n_{\Gamma_\e}(\gamma).
$$ 
The normal vector at $\gamma\in\Gamma_\e(t)$ is given as
$$
n_{\Gamma_\e}(t,\gamma)=\frac{\nabla\p_\e(t,\gamma)}{|\nabla\p_\e(t,\gamma)|}=\frac{\nabla\widetilde{\p}_\e(t,\gamma)}{|\nabla\widetilde{\p}_\e(t,\gamma)|}.
$$
For the surface gradient of $h_\e$, we can find the representation (we point to \cite[Section 2.5]{PS16})
\begin{equation}\label{eq:surface_gradient_h}
\nabla_{\Gamma_\e} h_\e(t,\gamma)=\left(\mathds{I}_3-h_\e(t,\gamma)L_{\Gamma_\e}(\gamma)\right)\left(n_{\Gamma_\e}(\gamma)-\frac{1}{n_{\Gamma_\e}(t,\overline{\gamma}_t)\cdot n_{\Gamma_\e}(\gamma)}n_{\Gamma_\e}(t,\overline{\gamma}_t)\right),
\end{equation}
where we have set $\overline{\gamma}_t=y_\e(t,\gamma)$.
Due to
$$
n_{\Gamma_\e}(t,\overline{\gamma}_t)=n_{\Gamma_\e}(\gamma)+\int_0^t\underbrace{\frac{\partial_t\nabla\widetilde{\p}_\e(t,\overline{\gamma}_t)|\nabla\widetilde{\p}_\e(t,\overline{\gamma}_t)|-\nabla\widetilde{\p}_\e(t,\overline{\gamma}_t)\partial_t|\nabla\widetilde{\p}_\e(t,\overline{\gamma}_t)|}{|\nabla\widetilde{\p}_\e(t,\overline{\gamma}_t)|^2}}_{=:\Phi_\e(\tau,\overline{\gamma}_t)}\di{\tau},
$$
we estimate
$$
|n_{\Gamma_\e}(\gamma)-n_{\Gamma_\e}(t,\overline{\gamma}_t)|\leq\int_0^t|\Phi_\e(\tau,\overline{\gamma}_t)|\di{\tau}\leq 2tl_ve^{3\e_0 l_vt}C_\p(l_v),
$$
and (for $t$ small enough, but independent of $\e$ and decreasing with increasing $l_v$)
$$
0<1-2te^{3\e l_vt}l_vC_\p(l_v)\leq n_{\Gamma_\e}(\gamma)\cdot n_{\Gamma_\e}(t,\overline{\gamma}_t)\leq1.
$$
Combining these estimates to bound the difference
\begin{multline*}
n_{\Gamma_\e}(\gamma)-\frac{1}{n_{\Gamma_\e}(t,\overline{\gamma}_t)\cdot n_{\Gamma_\e}(\gamma)}n_{\Gamma_\e}(t,\overline{\gamma}_t)\\
=n_{\Gamma_\e}(\gamma)-n_{\Gamma_\e}(t,\overline{\gamma}_t)+\frac{n_{\Gamma_\e}(t,\overline{\gamma}_t)\cdot n_{\Gamma_\e}(\gamma)-1}{n_{\Gamma_\e}(t,\overline{\gamma}_t)\cdot n_{\Gamma_\e}(\gamma)}n_{\Gamma_\e}(t,\overline{\gamma}_t),
\end{multline*}
we are led to
$$
\left|n_{\Gamma_\e}(\gamma)-\frac{1}{n_{\Gamma_\e}(t,\overline{\gamma}_t)\cdot n_{\Gamma_\e}(\gamma)}n_{\Gamma_\e}(t,\overline{\gamma}_t)\right|\leq2tl_ve^{3\e_0 l_v}C_\p(l_v)\left(1+\sum_{k=0}^\infty\left(2tl_ve^{3\e_0l_vt}C_\p(l_v)\right)^k\right).
$$
In summary, estimating \cref{eq:surface_gradient_h} leads us to
$$
|\nabla_{\Gamma_\e} h_\e(t,\gamma)|\leq\left(1+\frac{tl_v}{2a}\right)\left(2te^{3\e_0 l_vt}l_vC_\p(l_v)\left(1+\sum_{k=0}^\infty\left(2tl_ve^{3\e_0 l_vt}C_\p(l_v)\right)^k\right)\right).
$$
\end{proof}

Let $\chi\in\mathcal{D}(\R_\geq0)$ be a cut-off function that satisfies
$$
0\leq\chi\leq1,\qquad \chi(r)=1\ \text{if}\ r<\frac{1}{3},\qquad\chi(r)=0\ \text{if}\ r>\frac{2}{3}.
$$
In addition, let $\chi'(r)<0$ if $1/3<r<2/3$ as well as $\|\chi'\|_\infty\leq4$.

We introduce the function $s_\e\colon[0,T_v]\times\overline{\Omega}\to\overline{\Omega}$ via
\begin{align}\label{def_se}
s_\e(t,x)=
	\begin{cases}
	x+h_\e(t,P_{\Gamma_\e}(x))n_{\Gamma_\e}(P_{\Gamma_\e}(x))\chi\left(\frac{\dist(x,\Gamma_\e)}{\e a}\right),\quad &x\in U_{\Gamma_\e}\\
	x ,\quad &x\notin U_{\Gamma_\e}\end{cases}.
\end{align}

\begin{lemma}\label{mi:lem:psi}
The function $s_\e\colon[0,T_v]\times\overline{\Omega}\to\overline{\Omega}$ is a regular $C^1$-motion with $\Gamma_\e(t)=s_\e(t,\Gamma_\e)$ for all $t\in[0,T_v]$.
\end{lemma}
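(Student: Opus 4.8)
The strategy is to trade the explicit formula \eqref{def_se} for a description in normal (Fermi) coordinates around $\Gamma_\e$ and to transfer every property from the scalar profile built out of $h_\e$ and $\chi$. First I would note that for $x\in U_{\Gamma_\e}$, writing $x=\Lambda_\e(\gamma,r)$ with $\gamma=P_{\Gamma_\e}(x)$ and $r=d_{\Gamma_\e}(x)\in(-\e a,\e a)$ so that $\dist(x,\Gamma_\e)=|r|$, the definition \eqref{def_se} becomes
\begin{equation*}
s_\e(t,\Lambda_\e(\gamma,r))=\Lambda_\e\bigl(\gamma,\Psi_\e(t,\gamma,r)\bigr),\qquad \Psi_\e(t,\gamma,r):=r+h_\e(t,\gamma)\,\chi\!\left(\tfrac{|r|}{\e a}\right),
\end{equation*}
while $s_\e(t,\cdot)=\id$ on $\overline{\Omega}\setminus U_{\Gamma_\e}$. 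Since $\chi\equiv1$ on a neighbourhood of $0$, the kink of $r\mapsto|r|$ is irrelevant and $\Psi_\e$ is as smooth as $h_\e$ and $\chi$ permit, in particular $C^1$; since $\chi$ vanishes near $1$, the displacement in \eqref{def_se} is supported in $\{|d_{\Gamma_\e}|\le\tfrac{2}{3}\e a\}\ssubset U_{\Gamma_\e}$, so $s_\e$ is $C^1$ across $\partial U_{\Gamma_\e}$ and $\Psi_\e(t,\gamma,\pm\e a)=\pm\e a$. Evaluating at $r=0$ gives $s_\e(t,\gamma)=\Lambda_\e(\gamma,h_\e(t,\gamma))$ for $\gamma\in\Gamma_\e$, and, as $h_\e(0,\cdot)\equiv0$, $s_\e(0,\cdot)=\id$.

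Next I would establish the diffeomorphism property together with the Jacobian bounds. The fibrewise derivative is
\begin{equation*}
\partial_r\Psi_\e(t,\gamma,r)=1+h_\e(t,\gamma)\,\frac{\operatorname{sgn}(r)}{\e a}\,\chi'\!\left(\tfrac{|r|}{\e a}\right),
\end{equation*}
and from $\|h_\e\|_\infty\le\tfrac{\e a}{10}$ (part $(ii)$ of \Cref{mi:thm:height}) and $\|\chi'\|_\infty\le4$ one gets $\tfrac{3}{5}\le\partial_r\Psi_\e\le\tfrac{7}{5}$. Hence, for fixed $(t,\gamma)$, $r\mapsto\Psi_\e(t,\gamma,r)$ is an increasing $C^1$-diffeomorphism of $(-\e a,\e a)$ onto itself; combined with $s_\e(t,\cdot)=\id$ off $U_{\Gamma_\e}$ this shows $s_\e(t,\cdot)$ maps $U_{\Gamma_\e}$ bijectively onto itself and the complement identically, hence is a bijection of $\overline{\Omega}$. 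For the derivative bounds I would expand $Ds_\e=\mathds{I}_3+D_x\psi_\e$, with $\psi_\e$ the displacement in \eqref{def_se}, into the three contributions from differentiating $h_\e\circ P_{\Gamma_\e}$, $n_{\Gamma_\e}\circ P_{\Gamma_\e}$ (using $D_x(n_{\Gamma_\e}\circ P_{\Gamma_\e})=-L_{\Gamma_\e}(P_{\Gamma_\e})\,DP_{\Gamma_\e}$), and $\chi(\dist(\cdot,\Gamma_\e)/(\e a))$, and estimate each via \Cref{mi:thm:height}, the Weingarten bound $|L_{\Gamma_\e}|\le\tfrac{1}{2\e a}$, the $\e$-independent bound on $DP_{\Gamma_\e}$, and $\|\chi'\|_\infty\le4$; the constants in \Cref{mi:thm:height}$(ii)$ and in the cut-off $\chi$ are calibrated so that the $\e^{-1}$-blow-ups cancel against $\|h_\e\|_\infty\sim\e a$ and $\|Ds_\e(t,\cdot)-\mathds{I}_3\|_\infty\le\tfrac{1}{2}$ uniformly in $\e$ and $t\in[0,T_v]$. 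A Neumann series then yields $\|Ds_\e\|_\infty\le2$, invertibility of $Ds_\e$ everywhere, and $\|(Ds_\e)^{-1}\|_\infty\le2$; being a continuous bijection of $\overline{\Omega}$ with everywhere invertible differential, $s_\e(t,\cdot)$ is a $C^1$-diffeomorphism of $\overline{\Omega}$.

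Finally, by the first step and \Cref{mi:thm:height}, $s_\e(t,\Gamma_\e)=\{\Lambda_\e(\gamma,h_\e(t,\gamma)):\gamma\in\Gamma_\e\}=\Gamma_\e(t)$. Since $s_\e(t,\cdot)$ is a homeomorphism of $\overline{\Omega}$ equal to the identity on $\partial\Omega$, it carries the two connected components $\Omega_\e^{(1)},\Omega_\e^{(2)}$ of $\Omega\setminus\Gamma_\e$ onto the two components of $\Omega\setminus\Gamma_\e(t)$, and preservation of the component whose closure meets $\partial\Omega$ identifies $s_\e(t,\Omega_\e^{(i)})=\Omega_\e^{(i)}(t)$. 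Moreover $s_\e$, $Ds_\e$ and $\partial_ts_\e=(\partial_th_\e\circ P_{\Gamma_\e})\,(n_{\Gamma_\e}\circ P_{\Gamma_\e})\,\chi(\dist(\cdot,\Gamma_\e)/(\e a))$ are continuous on $[0,T_v]\times\overline{\Omega}$ (all ingredients come from $h_\e\in W^{2,\infty}$, the $C^3$-regularity of $\Gamma_\e$, and $\chi\in C^\infty$), with $\|\partial_ts_\e\|_\infty\le3\e l_vC_\p(l_v)$; together with $s_\e(0,\cdot)=\id$ this is precisely the assertion that $s_\e$ is a regular $C^1$-motion.

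The only genuinely delicate point is the uniform-in-$\e$ bookkeeping in the second step: one must see that the $\e^{-1}$-singularities of $L_{\Gamma_\e}$ and of $\nabla_x\chi(\,\cdot\,/(\e a))$ are exactly compensated by the $\e$-smallness of $h_\e$, which is where the carefully chosen constants of \Cref{mi:thm:height}$(ii)$ and of the cut-off $\chi$ are consumed; everything else is either the reduction to $\Psi_\e$ or a routine continuity/connectedness argument.
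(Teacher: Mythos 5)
Your proof is correct and takes essentially the route the paper itself indicates: its proof of \Cref{mi:lem:psi} just invokes the estimates of \Cref{mi:thm:height} and defers the details to \cite[Lemma 2.9]{E18}, and those details are precisely what you supply (the Fermi-coordinate reduction to $\Psi_\e(t,\gamma,r)=r+h_\e(t,\gamma)\chi(|r|/(\e a))$ for bijectivity, the bound $\|Ds_\e-\mathds{I}_3\|_\infty\le\tfrac{1}{2}$ from the Weingarten and cut-off estimates, and $h_\e\in C^{1,1}$ for the time regularity). Two cosmetic remarks: the bound $\le\tfrac{1}{2}$ really requires the \emph{combined} estimate $\tfrac{5}{\e a}\|h_\e\|_\infty+2\|\nabla_{\Gamma_\e}h_\e\|_\infty\le\tfrac{1}{2}$ of \Cref{mi:thm:height}$(ii)$ (using the two bounds separately only gives $\le1$), which your "calibration" remark implicitly does, and $\Omega_\e^{(2)}$ is not connected, so the final identification $s_\e(t,\Omega_\e^{(i)})=\Omega_\e^{(i)}(t)$ should be argued component-by-component (each inclusion contains a core fixed by $s_\e$), which your argument accommodates without change.
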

\begin{proof}
With the estimates provided in \Cref{mi:thm:height}, we can conclude that $s_\e(t,\cdot)\colon\overline{\Omega}\to\overline{\Omega}$ is a regular $C^1$-deformation with $\Gamma_\e(t)=s_\e(t,\Gamma_\e)$ for all $t\in[0,T_v]$.
For details, we refer to \cite[Lemma 2.9]{E18}.
The regularity with respect to time follows via $h_\e\in C^{1,1}([0,T_v]\times\overline{\Omega})$.
\end{proof}
\section{Limit behavior (proof of \texorpdfstring{\Cref{theorem:3}}{Theorem 6.2})}\label{mi:sec:limit}
In this section, the limit behavior of the functions related to the Hanzawa transformation $s_\e$ as given by \Cref{mi:lem:psi}, in particular $F_\e=Ds_\e$ and $J_\e=\det F_\e$, are investigated.
To be able to pass to the limit $\e\to0$, strong two-scale convergence of these quantities has to be established.
We start by introducing the folding and unfolding operators, similar (in spirit) considerations can be found, e.g., in~\cite{MP08}, and by formulating a few technical lemmas. 

In an effort to keep the notations for the estimations shorter, we introduce functions
\begin{subequations}
\begin{alignat}{2}
q_\e&\colon S_v\times U_{\Gamma_\e}\to\R^3,&\quad q_\e(t,x)&:=z_\e(t,y_\e^{-1}(t,x)),\label{seq:composita_q} \\
\eta_\e&\colon S_v\times\Gamma_\e\to\Omega,&\quad \eta_\e(t,\gamma)&:=\Lambda_\e(\gamma,h_\e(t,\gamma)).\label{seq:composita_eta}
\end{alignat}
\end{subequations}
\subsection{Preliminaries and auxiliary lemmas}
\begin{figure}[!h]
\centering
\begin{tikzpicture}[scale=1.7]
\begin{axis}[
	unit vector ratio*=1 1 1,
  axis lines=middle,
  axis line style={thick},
  xmin=-0.5,xmax=4.5,ymin=-0.5,ymax=2.5,
	grid=major,
  grid style={densely dotted,black!80}]
\draw[thick, fill=black!15] (0,0) rectangle (1,1);
\draw[fill] (3.2,1.8) circle (0.03);
\draw[fill] (3,1) circle (0.03);
\draw[fill] (0.2,0.8) circle (0.03);
\draw (3.3,1.8) node[above]{{\footnotesize{$x$}}};
\draw (3.2,1.05) node[below]{{\footnotesize{$[x]$}}};
\draw (0.45,0.85) node[below]{{\footnotesize{$\{x\}$}}};
\draw[thin,->] (0,0) -- (3.18,1.79);
\draw[thin,->] (0,0) -- (3,1);
\draw[thick, dotted] (3,1) -- (3.2,1.8);
\draw[thin,->] (0,0) -- (0.2,0.8);
\end{axis}
\end{tikzpicture}
\caption{Simple example demonstrating the construction of $[x]$ and $\{x\}$.}
\label{pr:fig:unfolding}
\end{figure}
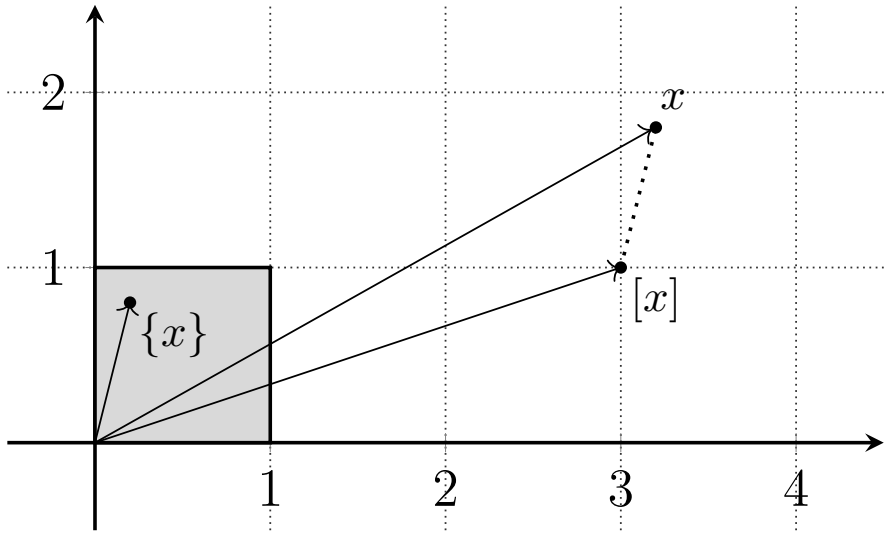

For $x\in\R^3$, $[x]$ is defined to be the unique $k\in\Z^3$ such that $\{x\}:=x-[x]\in[0,1)^3$ and, for functions $f\colon\Omega\to\R$ and $f_b\colon\Gamma_\e\to\R$, we denote the periodic unfolding via $\decon{f}\colon\Omega\times Y\to\R$ and $\decon{f_b}\colon\Omega\times\Gamma\to\R$ defined by
$$
\decon{f}(x,y)=f\left(\e y+\left[\frac{x}{\e}\right]\right),\quad \decon{f_b}(x,\gamma)=f_b\left(\e\gamma+\left[\frac{x}{\e}\right]\right).
$$
We get the integral identities (see \cite{CDG02})
\begin{align*}
\int_{\Omega}f(x)\di{x}=\int_{\Omega\times Y}\decon{f}(x,y)\di{(x,y)},\\
\int_{\Gamma_\e}f_b(x)\di{x}=\frac{1}{\e}\int_{\Omega\times\Gamma}\decon{f_b}(x,y)\di{(x,y)}
\end{align*}
and, for $\id\colon\Omega\to\Omega$ and $n,m\in\N$, it holds
\begin{align}\label[in]{eq:est_unfold}
\left|\decongen{\id}{n}-\decongen{\id}{m}\right|\leq\sqrt{2}\left(\e_n+\e_m\right).
\end{align}
In addition, for functions $g\colon\Omega\times Y\to\R$ and $g_b\colon\Omega\times\Gamma\to\R$, we set
\begin{alignat*}{2}
\recon{g}&\colon\Omega\to\R,&\quad \recon{g}(x)&=g\left(x,\deval{x}\right),\\
\recon{g_b}&\colon\Gamma_\e\to\R,&\quad \recon{g_b}(x)&=g_b\left(x,\deval{x}\right).
\end{alignat*}
We find that, $f\in W^{1,2}(\Omega;W^{1,2}_\#(Y))$,
\begin{equation}\label{in:est_foldunfold}
\big\|f-\decon{\recon{f}}\big\|_{L^2(\Omega\times Y)}^2
	\rightarrow0
\end{equation}
as $\left(\e y+\e\eeval{x},\left[y+\eeval{x}\right]\right)$ converges uniformly to $(x,y)$. 

The following identities are a consequence of the periodicity of the initial configuration.
For $x\in U_{\Gamma_\e}$, $y\in Y$, $\gamma\in\Gamma$, and $r\in(-\e a,\e a)$, it holds
\begin{subequations}\label{eq:decongeometric}
\begin{align}
\decon{n_\e}(x,\gamma)&=n(\gamma),\label{seq:decongeometric:1}\\
\decon{\Lambda_\e}(x,\gamma,r)&=\e\Lambda\left(\gamma,\frac{r}{\e}\right)+\e\eeval{x},\label{seq:decongeometric:4}\\
\decon{L_{\Gamma_\e}}(x,\gamma)&=\e^{-1}L_\Gamma(\gamma),\label{seq:decongeometric:5}\\
\decon{P_{\Gamma_\e}}(x,y)&=\e P_{\Gamma}(y)+\e\eeval{x},\label{seq:decongeometric:3}\\
\decon{DP_{\Gamma_\e}}(x,y)&=\big(\mathds{I}-d_\Gamma(y)L_\Gamma(P_\Gamma(y))\big)^{-1}\left(\mathds{I}-n(P_\Gamma(y))\otimes n(P_\Gamma(y))\right)\label{seq:decongeometric:6}.
\end{align}
\end{subequations}
With these relations in mind, we are able to connect the limit behavior of the auxiliary function $\eta_\e$ and the height function $h_\e$.

\begin{lemma}\label{lemma:etatomu}
Let $n$, $m\in\N$.
It holds
\begin{align*}
\big|\e_n^{-1}\decongen{\eta_{\e_n}}{n}-\e_m^{-1}\decongen{\eta_{\e_m}}{m}\big|
&\leq\big|\e_n^{-1}\decongen{h_{\e_n}}{n}-\e_m^{-1}\decongen{h_{\e_m}}{m}\big|
\end{align*}
as well as
\begin{equation*}
\big|\decongen{D\eta_{\e_n}}{n}-\decongen{D\eta_{\e_m}}{m}\big|
\leq\frac{1}{2a}\big|\e_n^{-1}\decongen{h_{\e_n}}{n}-\e_m^{-1}\decongen{h_{\e_n}}{n}\big|+\big|\decongen{\nabla h_{\e_n}}{n}-\decongen{\nabla h_{\e_m}}{m}\big|.
\end{equation*}
\end{lemma}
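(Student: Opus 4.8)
The plan is to unfold the defining relation $\eta_\e(t,\gamma)=\Lambda_\e(\gamma,h_\e(t,\gamma))$ and to exploit the explicit periodic identities collected in \cref{eq:decongeometric}. First I would apply the unfolding operator to both sides; using \cref{seq:decongeometric:4} with $r=h_{\e}(t,\gamma)$ yields
\begin{equation*}
\decongen{\eta_{\e_n}}{n}(t,x,\gamma)=\e_n\Lambda\!\left(\gamma,\e_n^{-1}\decongen{h_{\e_n}}{n}(t,x,\gamma)\right)+\e_n\eevalgen{x}{n}.
\end{equation*}
Multiplying by $\e_n^{-1}$ and subtracting the analogous expression for $\e_m$, the translation terms $\eevalgen{x}{n}$ and $\eevalgen{x}{m}$ do \emph{not} automatically cancel, so I would need to be careful here — but in fact the statement as written does not involve any $\id$-difference term, so the intended reading must be that the $\e\eeval{x}$ contributions are being tracked separately (or absorbed), and what remains is the comparison of $\Lambda(\gamma,\cdot)$ evaluated at the two rescaled heights. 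Since $\Lambda(\gamma,s)=\gamma+s\,n_\Gamma(\gamma)$ is affine in $s$ with $|n_\Gamma(\gamma)|=1$, the first inequality follows immediately from
\begin{equation*}
\left|\Lambda(\gamma,s_1)-\Lambda(\gamma,s_2)\right|=|s_1-s_2|
\end{equation*}
applied with $s_i=\e_{n}^{-1}\decongen{h_{\e_n}}{n}$, $\e_m^{-1}\decongen{h_{\e_m}}{m}$ respectively.

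For the second inequality I would differentiate $\eta_\e$ with respect to the surface variable before unfolding. Using the representation of the surface gradient (the same $(\mathds{I}-h_\e L_{\Gamma_\e})$-type factor appearing in \cref{eq:surface_gradient_h}), one gets schematically
\begin{equation*}
D\eta_\e=\big(\mathds{I}-h_\e L_{\Gamma_\e}\big)\big(\mathds{I}-n_{\Gamma_\e}\otimes n_{\Gamma_\e}\big)+n_{\Gamma_\e}\otimes\nabla_{\Gamma_\e}h_\e,
\end{equation*}
and after unfolding the geometric quantities become $\e$-independent via \cref{seq:decongeometric:1,seq:decongeometric:5}: $\decon{n_\e}=n(\gamma)$ and $\decon{L_{\Gamma_\e}}=\e^{-1}L_\Gamma(\gamma)$, so that $\decon{h_\e L_{\Gamma_\e}}=\big(\e^{-1}\decon{h_\e}\big)L_\Gamma$. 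Writing the difference $\decongen{D\eta_{\e_n}}{n}-\decongen{D\eta_{\e_m}}{m}$ and using bilinearity, the difference splits into a term controlled by $\big|\e_n^{-1}\decongen{h_{\e_n}}{n}-\e_m^{-1}\decongen{h_{\e_m}}{m}\big|$ times $|L_\Gamma|\le\tfrac{1}{2a}$ (from \cref{weingarten} at scale $1$), and a term equal to $n(\gamma)\otimes\big(\decongen{\nabla h_{\e_n}}{n}-\decongen{\nabla h_{\e_m}}{m}\big)$, whose norm is exactly $\big|\decongen{\nabla h_{\e_n}}{n}-\decongen{\nabla h_{\e_m}}{m}\big|$ since $|n(\gamma)|=1$. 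Collecting these two contributions gives the claimed bound. (One should note the apparent typo in the statement: the second subscript in the first term on the right-hand side of the second inequality should be $m$, i.e.\ $\e_m^{-1}\decongen{h_{\e_m}}{m}$, matching the left-hand side.)

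The main obstacle I anticipate is bookkeeping rather than anything deep: making precise how the unfolding operator interacts with the composition $\Lambda_\e(\gamma,h_\e(t,\gamma))$, in particular justifying \cref{seq:decongeometric:4} when the last slot depends on $\gamma$ through $h_\e$, and confirming that the $\e\eeval{x}$ translation is handled consistently with how the lemma is stated (since it silently drops out of the right-hand sides). Once that is pinned down, everything reduces to the affine structure of $\Lambda(\gamma,\cdot)$, the unit-length of $n_\Gamma$, and the uniform bound $|L_\Gamma|\le\tfrac{1}{2a}$, all of which are already available. The remaining estimates are then just triangle inequality and norms of rank-one tensors, so no further technical machinery is needed.
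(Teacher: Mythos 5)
Your proposal is correct and follows essentially the paper's own argument: unfold $\eta_\e=\Lambda_\e(\cdot,h_\e)$ via the periodic identities \cref{seq:decongeometric:1,seq:decongeometric:4,seq:decongeometric:5}, use that $\Lambda(\gamma,\cdot)$ is contractive for the first bound, and split the unfolded surface derivative of $\eta_\e$ into an $h_\e L_{\Gamma_\e}$ part (controlled via $|L_\Gamma|\le\tfrac{1}{2a}$, the unit-cell version of \cref{weingarten}) plus a $\nabla_{\Gamma_\e}h_\e\otimes n_\e$ part for the second. The $\e\eeval{x}$ translation terms that worry you are dropped just as silently in the paper's proof (which writes the first step as an equality), and you correctly spotted the $n$-versus-$m$ typo in the stated right-hand side, so your argument is no less complete than the original.
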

\begin{proof}
Since $\Lambda$ is contractive and \cref{seq:decongeometric:1,seq:decongeometric:4} hold, we conclude
\begin{multline*}
\big|\e_n^{-1}\decongen{\eta_{\e_n}}{n}-\e_m^{-1}\decongen{\eta_{\e_m}}{m}\big|\\
=\big|\Lambda\left(\gamma,\e_n^{-1}\decongen{h_{\e_n}}{n}\right)-\Lambda(\gamma,\e_m^{-1}\decongen{h_{\e_m}}{m})\big|
\leq\big|\e_n^{-1}\decongen{h_{\e_n}}{n}-\e_m^{-1}\decongen{h_{\e_m}}{m}\big|.
\end{multline*}
The spatial derivative of $\eta_\e$ is given as
$$
D_{\Gamma_\e}\eta_\e=\mathrm{Id}+\nabla_{\Gamma_\e}h_\e\otimes n_\e-h_\e L_{\Gamma_\e}.
$$
Using \cref{seq:decongeometric:1,seq:decongeometric:4,seq:decongeometric:5}, we estimate
\begin{equation*}
\big|\decongen{D_{\Gamma_\e}\eta_{\e_n}}{n}-\decongen{D_{\Gamma_\e}\eta_{\e_m}}{m}\big|
	\leq\left|\decongen{\nabla_{\Gamma_{\e_n}}h_{\e_n}}{n}-\decongen{\nabla_{\Gamma_{\e_m}}h_{\e_m}}{m}\right|
	+\frac{1}{2a}\big|\e_n^{-1}\decongen{h_{\e_n}}{n}-\e_m^{-1}\decongen{h_{\e_n}}{n}\big|.
\end{equation*}
\end{proof}

In the next few lemmas, we establish some technical results which are needed to show the strong two-scale convergence of $F_\e$ and $J_\e$.

\begin{lemma}\label{lemma:trace_convergence}
\begin{itemize}
	\item[$(i)$] Let $u_\e\in W^{1,2}(\Omega)$ and $u\in L^2(\Omega;W^{1,2}_\#(Y))$ such that $\decon{u_\e}\to u$ and $\e\decon{\nabla u_\e}\to\nabla_y u$ strongly in $L^2(\Omega\times Y)$.
Then, $\decon{u_\e}\to u$ strongly in $L^2(\Omega\times\Gamma)$.
	\item[$(ii)$] For all $u\in W^{1,2}(\Omega)$, it holds that
$$
\e\|u\|^2_{L^2(\Gamma_\e(t))}\leq 4C_{tr}\left(\|u\|^2_{L^2(\Omega)}+\e^2\|\nabla u\|^2_{L^2(\Omega)}\right).
$$
\end{itemize}
\end{lemma}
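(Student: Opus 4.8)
The plan is to prove the two assertions of \Cref{lemma:trace_convergence} separately, with part~$(ii)$ serving as the workhorse for part~$(i)$.

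\medskip
\textbf{Part $(ii)$.}
I would first reduce to a single reference cell. Because $\Gamma_\e$ is (away from the boundary layer) the union of the scaled copies $\e(\Gamma + k)$ over $k \in Z_\e$, and $\Omega_\e^{(1)} \supset \bigcup_{k \in Z_\e} \e(Y^{(1)} + k)$, it suffices to establish the scaled trace inequality on the unit cell $Y$: for $w \in W^{1,2}(Y)$,
\begin{equation*}
\|w\|_{L^2(\Gamma)}^2 \leq C_{tr}\bigl(\|w\|_{L^2(Y)}^2 + \|\nabla w\|_{L^2(Y)}^2\bigr),
\end{equation*}
which is just the standard trace theorem on the fixed Lipschitz domain $Y$ applied to the $C^3$-hypersurface $\Gamma \subset Y$. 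Then I would rescale: setting $w(y) = u(\e y + \e k)$ on $Y$ for each cell, a change of variables gives $\e\|u\|_{L^2(\e(\Gamma+k))}^2 = \e^3 \cdot \e^{-2}\|w\|_{L^2(\Gamma)}^2$... more carefully, the surface measure scales by $\e^2$ and the volume measure by $\e^3$, so $\|u\|_{L^2(\e(\Gamma+k))}^2 = \e^2\|w\|_{L^2(\Gamma)}^2$, $\|u\|_{L^2(\e(Y+k))}^2 = \e^3\|w\|_{L^2(Y)}^2$, and $\|\nabla u\|_{L^2(\e(Y+k))}^2 = \e\|\nabla w\|_{L^2(Y)}^2$. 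Plugging the cell estimate in and multiplying by $\e$ yields $\e\|u\|_{L^2(\e(\Gamma+k))}^2 \leq C_{tr}(\|u\|_{L^2(\e(Y+k))}^2 + \e^2\|\nabla u\|_{L^2(\e(Y+k))}^2)$. Summing over $k \in Z_\e$ and noting each point of $\Omega$ lies in at most a fixed number of (closed) cells accounts for the factor $4$ (or any fixed constant); since $\Gamma_\e(t) = s_\e(t,\Gamma_\e)$ with $\|Ds_\e\|_\infty, \|(Ds_\e)^{-1}\|_\infty \leq 2$ by \Cref{theorem:1}, the surface measure of $\Gamma_\e(t)$ is comparable to that of $\Gamma_\e$ up to a factor bounded by $2^2 = 4$, which is where the constant $4C_{tr}$ comes from; transporting the estimate from $\Gamma_\e$ to $\Gamma_\e(t)$ costs only this bounded factor.

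\medskip
\textbf{Part $(i)$.}
Here the strategy is to test the unfolded trace against the reconstruction of the limit and exploit the volume-to-surface estimate of part~$(ii)$. Write $\decon{u_\e} - u = (\decon{u_\e} - \decon{\recon{u}}) + (\decon{\recon{u}} - u)$ in $L^2(\Omega\times\Gamma)$. The second term goes to zero by \cref{in:est_foldunfold} (approximating $u \in L^2(\Omega; W^{1,2}_\#(Y))$ suitably, or rather applying it on the trace), since the folding/unfolding reconstruction converges. For the first term, note $\decon{u_\e} - \decon{\recon{u}} = \decon{u_\e - \recon{u}}$ where, by the integral identity for traces, $\e^{-1}\|\decon{u_\e - \recon{u}}\|_{L^2(\Omega\times\Gamma)}^2 = \|u_\e - \recon{u}\|_{L^2(\Gamma_\e)}^2$. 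Apply part~$(ii)$ (with $t$ such that $\Gamma_\e(t) = \Gamma_\e$, i.e.\ $t=0$, so the pure $\Gamma_\e$ version suffices) to the function $u_\e - \recon{u} \in W^{1,2}(\Omega)$:
\begin{equation*}
\|\decon{u_\e - \recon{u}}\|_{L^2(\Omega\times\Gamma)}^2 \leq 4C_{tr}\bigl(\|u_\e - \recon{u}\|_{L^2(\Omega)}^2 + \e^2\|\nabla(u_\e - \recon{u})\|_{L^2(\Omega)}^2\bigr).
\end{equation*}
Unfolding the right-hand side back, $\|u_\e - \recon{u}\|_{L^2(\Omega)}^2 = \|\decon{u_\e} - \decon{\recon{u}}\|_{L^2(\Omega\times Y)}^2 \to 0$ because $\decon{u_\e} \to u$ and $\decon{\recon{u}} \to u$ both in $L^2(\Omega\times Y)$; and $\e^2\|\nabla(u_\e - \recon{u})\|_{L^2(\Omega)}^2 = \|\e\decon{\nabla u_\e} - \e\decon{\nabla\recon{u}}\|_{L^2(\Omega\times Y)}^2$, where $\e\decon{\nabla u_\e} \to \nabla_y u$ by hypothesis and $\e\decon{\nabla\recon{u}} \to \nabla_y u$ by the chain rule $\e\nabla\recon{u}(x) = \e(\nabla_x u)(x,\{x/\e\}) + (\nabla_y u)(x,\{x/\e\})$, the first summand being $O(\e)$ and the second converging by the reconstruction property. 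Hence the first term also vanishes, and the triangle inequality finishes the proof.

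\medskip
\textbf{Main obstacle.}
The genuinely delicate point is justifying the convergences $\decon{\recon{u}} \to u$ and $\e\decon{\nabla\recon{u}} \to \nabla_y u$ in $L^2(\Omega\times Y)$ for a general $u \in L^2(\Omega; W^{1,2}_\#(Y))$, since $\recon{u}$ need not even be in $W^{1,2}(\Omega)$ unless $u$ has extra regularity in $x$ (the paper's \cref{in:est_foldunfold} is stated for $u \in W^{1,2}(\Omega; W^{1,2}_\#(Y))$). The clean way around this is a density argument: first prove part~$(i)$ for $u$ in the dense subspace $W^{1,2}(\Omega; W^{1,2}_\#(Y))$ (or even $C^\infty(\overline\Omega; W^{1,2}_\#(Y))$), using \cref{in:est_foldunfold} and its surface analogue directly, then extend to general $u$ by approximation, controlling the error on the surface uniformly in $\e$ precisely via the estimate of part~$(ii)$ applied to the smooth approximants' $\Omega$-norms. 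This interplay — using $(ii)$ both to pass the surface error to a volume error and to make the density argument $\e$-uniform — is the technical heart of the lemma; everything else is bookkeeping with the unfolding identities \cref{eq:decongeometric} and the change of variables.
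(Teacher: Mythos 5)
Your part (ii) has a genuine gap at the step where you pass from $\Gamma_\e$ to the moved surface $\Gamma_\e(t)$. The cell-by-cell rescaling giving $\e\|u\|^2_{L^2(\Gamma_\e)}\leq C_{tr}\big(\|u\|^2_{L^2(\Omega)}+\e^2\|\nabla u\|^2_{L^2(\Omega)}\big)$ is fine, but the claim that the estimate then "transports" to $\Gamma_\e(t)$ because the surface measures of $\Gamma_\e(t)$ and $\Gamma_\e$ are comparable is not a proof and, as a mechanism, is false: comparability of measures cannot bound $\int_{\Gamma_\e(t)}|u|^2\di{\sigma}$ by data of $u$ on $\Gamma_\e$ (take $u$ vanishing on $\Gamma_\e$ but not on $\Gamma_\e(t)$). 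What is needed — and what the paper does — is to pull the integral back along the motion: $\e\int_{\Gamma_\e(t)}|u|^2\di{\sigma}=\e\int_{\Gamma_\e}|u\circ y_\e(t,\cdot)|^2\,|\det D_{\Gamma_\e}y_\e|\di{\sigma}$, apply your $\e$-scaled trace inequality to the \emph{composed} function $u\circ y_\e(t,\cdot)\in W^{1,2}(\Omega)$, control $\nabla(u\circ y_\e)$ by the chain rule and the bound on $Dy_\e$ from \Cref{lemma:estimates:yz} (equivalently $\|Ds_\e\|_\infty\leq2$ from \Cref{theorem:1}), and finally change variables $x\mapsto y_\e^{-1}(t,x)$, using that $y_\e^{-1}(t,\Omega)=\Omega$ with bounded Jacobian, to recover $\|u\|_{L^2(\Omega)}$ and $\|\nabla u\|_{L^2(\Omega)}$ on the right-hand side. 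The factor $4$ arises from the chain-rule and Jacobian factors in this composition argument, not from comparing surface measures.

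For part (i) your route (reconstruction $\recon{u}$, part (ii) at $t=0$, and a density argument to cope with $\recon{u}\notin W^{1,2}(\Omega)$) can be made to work — you correctly identify the obstacle, and \cref{in:est_foldunfold} only covers regular $u$ — but it is much more roundabout than necessary and is not the paper's argument. Since for a.e.\ $x\in\Omega$ the slice $y\mapsto\decon{u_\e}(x,y)$ belongs to $W^{1,2}(Y)$ with $\nabla_y\decon{u_\e}(x,\cdot)=\e\decon{\nabla u_\e}(x,\cdot)$, you can simply apply the trace embedding $W^{1,2}(Y)\hookrightarrow L^2(\Gamma)$ to the difference $\decon{u_\e}(x,\cdot)-u(x,\cdot)$ and integrate in $x$: the resulting bound is exactly $\|\decon{u_\e}-u\|^2_{L^2(\Omega\times Y)}+\|\e\decon{\nabla u_\e}-\nabla_yu\|^2_{L^2(\Omega\times Y)}$, which tends to zero by hypothesis. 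This avoids the reconstruction operator, the density argument, and any use of part (ii) altogether.
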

\begin{proof}
$(i)$. This is due to the trace embedding operator $W^{1,2}(Y)\hookrightarrow L^2(\Gamma)$.

$(ii)$. Let $C_{tr}$ be the trace constant of the embedding $W^{1,2}(\Omega)\hookrightarrow L^2(\Gamma_\e)$.
For $u\in W^{1,2}(\Omega)$ and $t\in[0,T_v]$, we have
\begin{align*}
\e\int_{\Gamma_\e(t)}|u(\gamma)|^2\di{\gamma}
	&=\e\int_{\Gamma_\e}|u(y_\e(t,\gamma))|^2|\det(D_{\Gamma_\e}y_\e(t,\gamma))|\di{\gamma}\\
	&\leq2C_{tr}\left(\int_{\Omega}|u\circ y_\e(x)|^2\di{x}+\e^2\int_{\Omega}|\nabla(u\circ y_\e)(x)|^2\di{x}\right)\\
	&\leq2C_{tr}\left(\int_{\Omega}|u\circ y_\e(x)|^2\di{x}+2\e^2\int_{\Omega}|\nabla u\circ y_\e(x)|^2\di{x}\right).
\end{align*}
The time parametrized coordinate transformation $x\mapsto y_\e^{-1}(t,x)$ (note that $y_\e^{-1}(t,\Omega)=\Omega$) then leads to
$$
\e\|u\|_{L^2(\Gamma_\e(t))}^2\leq4C_{tr}\left(\|u\|^2_{L^2(\Omega)}+\e^2\|\nabla u\|^2_{L^2(\Omega)}\right)
$$
\end{proof}

Parts of the analysis rely on the ability to estimate certain differences of some composites of functions involving $y_\e$.
In the following lemma, we collect some general results.
\begin{lemma}\label{lemma:decon_estimate}
Let $(f_\e)\subset W^{1,\infty}(\Omega)$ and $n,m\in\N$ ($n>m$).
\begin{enumerate}
\item\label{it:lemma:decon_estimate:1} Let $\|\nabla f_{\e_m}\|_\infty$ be bounded independently of the parameter $\e$ and $\decon{f_\e}$ be a Cauchy sequence.
Then, there are $C,C_m>0$ such that 
\begin{equation*}
\|f_{\e_n}(\decongen{y_{\e_n}}{n})-f_{\e_m}(\decongen{y_{\e_m}}{m})\|_{L^2(\Omega\times Y)}
	\leq C_m
	+C\big\|\decongen{y_{\e_n}}{n}-\decongen{y_{\e_m}}{m}\big\|_{L^2(\Omega\times Y)}^2.
\end{equation*}
and such that $\lim_{m\to\infty}C_m=0$.
\item\label{it:lemma:decon_estimate:2} Let $f\in W^{1,\infty}(\Omega;W^{1,\infty}_\#(Y))$ such that $\decon{f_\e}\to f$.
For $g_\e=y_\e$ or $g_\e=y_\e^{-1}$, we can estimate
\begin{multline*}
\|f_{\e_n}(\decongen{g_{\e_n}}{n})-f_{\e_m}(\decongen{g_{\e_m}}{m})\|^2_{L^2(\Omega\times Y)}\\
	\leq C_m+C\left(\big\|\decongen{g_{\e_n}}{n}-\decongen{g_{\e_m}}{m}\big\|^2_{L^2(\Omega\times Y)}
	+\big\|\e_n^{-1}\decongen{g_{\e_n}}{n}-\e_m^{-1}\decongen{g_{\e_m}}{m}\big\|^2_{L^2(\Omega\times Y)}\right)
\end{multline*}
where $C,C_m>0$ and $\lim_{m\to\infty}C_m=0$.
\item\label{it:lemma:decon_estimate:3} Let $f\in W^{1,\infty}(\Omega;W^{1,\infty}_\#(Y))$ such that $\decon{f_\e}\to f$ and $\e\decon{\nabla f_\e}\to\nabla_yf$.
Then, we estimate
\begin{multline*}
\|f_{\e_n}(\decongen{\eta_{\e_n}}{n})-f_{\e_m}(\decongen{\eta_{\e_m}}{m})\|^2_{L^2(\Omega\times\Gamma)}\\
	\leq C_m+C\left(\big\|\decongen{h_{\e_n}}{n}-\decongen{h_{\e_m}}{m}\big\|^2_{L^2(\Omega\times\Gamma)}
	+\big\|\e_n^{-1}\decongen{h_{\e_n}}{n}-\e_m^{-1}\decongen{h_{\e_m}}{m}\big\|^2_{L^2(\Omega\times\Gamma)}\right)
\end{multline*}
where $C,C_m>0$ and $\lim_{m\to\infty}C_m=0$.
\end{enumerate}
\end{lemma}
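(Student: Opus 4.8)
\textbf{Proof proposal for \Cref{lemma:decon_estimate}.}

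The plan is to handle the three items by the same Taylor/Lipschitz expansion, splitting the difference $f_{\e_n}(\cdot)-f_{\e_m}(\cdot)$ into a ``coefficient'' part and a ``argument'' part and exploiting the unfolding identities together with the uniform estimates from \Cref{lemma:estimates:yz} and \Cref{mi:thm:height}. For item~\ref{it:lemma:decon_estimate:1}, I would first insert an intermediate term and write
\begin{align*}
f_{\e_n}(\decongen{y_{\e_n}}{n})-f_{\e_m}(\decongen{y_{\e_m}}{m})
 &= \big(f_{\e_n}(\decongen{y_{\e_n}}{n})-f_{\e_m}(\decongen{y_{\e_n}}{n})\big)
 + \big(f_{\e_m}(\decongen{y_{\e_n}}{n})-f_{\e_m}(\decongen{y_{\e_m}}{m})\big).
\end{align*}
The second bracket is controlled by $\|\nabla f_{\e_m}\|_\infty\,\big|\decongen{y_{\e_n}}{n}-\decongen{y_{\e_m}}{m}\big|$, which after integration gives the term $C\|\decongen{y_{\e_n}}{n}-\decongen{y_{\e_m}}{m}\|_{L^2}^2$ (the square appears because $y_\e$ stays in a bounded set, so the $L^1$–$L^2$ discrepancy is harmless up to a constant, or one simply keeps it as written and absorbs it). For the first bracket one needs that $f_{\e_n}-f_{\e_m}$, evaluated along $\decongen{y_{\e_n}}{n}$, is small; here I would change variables back via $y_{\e_n}^{-1}(t,\cdot)$ (which preserves $\Omega$ by \Cref{mi:lem:tdelta}), use that the Jacobian is bounded by the $\|Ds_\e\|_\infty$-type estimates, and reduce to $\|f_{\e_n}-f_{\e_m}\|_{L^2(\Omega)}$; since $\decon{f_\e}$ is Cauchy in $L^2(\Omega\times Y)$ and $\int_\Omega|f_\e|^2 = \int_{\Omega\times Y}|\decon{f_\e}|^2$, this is the quantity that I would call $C_m\to0$ (more precisely $C_m$ is a bound on $\|\decon{f_{\e_n}}-\decon{f_{\e_m}}\|_{L^2}$ over $n>m$, which tends to $0$ as $m\to\infty$ by the Cauchy property).

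For item~\ref{it:lemma:decon_estimate:2}, the extra assumption $\decon{f_\e}\to f$ with $f\in W^{1,\infty}(\Omega;W^{1,\infty}_\#(Y))$ lets me be more precise about the coefficient part: I would write $\decongen{f_{\e_n}}{n}(\decongen{g_{\e_n}}{n}) = \recongen{\decon{f}}{n}$-type expressions and use that unfolding of $f_\e$ composed with $g_\e$ is, up to the reconstruction error \cref{in:est_foldunfold} and the uniform continuity of $f$ in both variables, controlled by $\big|\decongen{g_{\e_n}}{n}-\decongen{g_{\e_m}}{m}\big|$ \emph{and} $\big|\e_n^{-1}\decongen{g_{\e_n}}{n}-\e_m^{-1}\decongen{g_{\e_m}}{m}\big|$ — the first because $f$ is Lipschitz in its slow variable $x$, the second because the fast variable of $\recongen{f}{}$ at a point $p$ is $\{p/\e\}$, so the relevant quantity is $\e^{-1}g_\e$ modulo $\Z^3$, and differences of fractional parts are bounded by differences of $\e^{-1}g_\e$. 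The case $g_\e = y_\e^{-1}$ is symmetric to $g_\e=y_\e$ since $y_\e^{-1}$ enjoys the same kind of Lipschitz/boundedness estimates (via \eqref{eq:Dtyinv} and \Cref{lemma:estimates:yz}). The constant $C_m\to0$ again collects the reconstruction error $\|f-\decon{\recon{f}}\|_{L^2(\Omega\times Y)}$ together with $\|\decon{f_{\e_n}}-\decon{f_{\e_m}}\|$, both $\to0$.

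For item~\ref{it:lemma:decon_estimate:3}, the computation takes place on $\Omega\times\Gamma$ rather than $\Omega\times Y$, and the argument is $\eta_\e(t,\gamma)=\Lambda_\e(\gamma,h_\e(t,\gamma))$. Using \cref{seq:decongeometric:4}, unfolding turns $\decongen{\eta_{\e}}{}$ into $\e\Lambda(\gamma,\e^{-1}\decon{h_\e}) + \e\eeval{x}$, so differences of $\decongen{\eta_{\e_n}}{n}$ are governed exactly by differences of $\e^{-1}\decongen{h_{\e_n}}{n}$ and of $\decongen{h_{\e_n}}{n}$ (this is essentially \Cref{lemma:etatomu}). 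The crucial point allowing the $\Gamma$-version is the hypothesis $\e\decon{\nabla f_\e}\to\nabla_y f$, which via \Cref{lemma:trace_convergence}$(i)$ upgrades the $L^2(\Omega\times Y)$ convergence of $\decon{f_\e}$ to an $L^2(\Omega\times\Gamma)$ convergence, so that the coefficient error $\|\decon{f_{\e_n}}-\decon{f_{\e_m}}\|_{L^2(\Omega\times\Gamma)}$ is again a legitimate $C_m\to0$; and \Cref{lemma:trace_convergence}$(ii)$ provides the quantitative trace bound $\e\|\cdot\|_{L^2(\Gamma_\e)}^2\lesssim\|\cdot\|_{L^2(\Omega)}^2+\e^2\|\nabla\cdot\|_{L^2(\Omega)}^2$ needed whenever I move an $L^2(\Omega)$-estimate onto $\Gamma_\e$. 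Then one proceeds as in the previous items.

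I expect the main obstacle to be item~\ref{it:lemma:decon_estimate:3}, specifically justifying that the ``coefficient'' discrepancy $f_{\e_n}-f_{\e_m}$ measured on the \emph{moving} surfaces $\Gamma_{\e}(t)$ (equivalently, along $\eta_\e$) can be bounded by the $L^2(\Omega\times\Gamma)$-norm of $\decon{f_{\e_n}}-\decon{f_{\e_m}}$ with a constant uniform in $\e$: this requires carefully combining the trace estimate of \Cref{lemma:trace_convergence}$(ii)$ with the area-element bound $|\det D_{\Gamma_\e}\eta_\e|\le C$ coming from \Cref{mi:thm:height}$(ii)$, and checking that the reconstruction error \cref{in:est_foldunfold} does not degrade under the trace. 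The remaining steps — the Taylor splitting, the change of variables $y_\e^{-1}$, and bounding fractional-part differences by $\e^{-1}g_\e$-differences — are routine given \Cref{lemma:estimates:yz}, \Cref{mi:lem:tdelta}, and the unfolding identities \cref{eq:decongeometric}.
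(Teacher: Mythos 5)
A preliminary caveat: the paper itself does not prove this lemma — it defers to \cite[Lemma 6.20]{E18} — so your proposal can only be checked for internal soundness, not against an in-text argument. Your architecture is the natural one and matches how the lemma is used later: split $f_{\e_n}(\cdot)-f_{\e_m}(\cdot)$ into a coefficient part and an argument part, control the argument part by Lipschitz continuity in the slow variable and by $Y$-periodicity of the limit in the fast variable (which is exactly what produces the $\e^{-1}$-scaled differences), use the reconstruction error \cref{in:est_foldunfold}, and change variables with Jacobians controlled through \Cref{lemma:estimates:yz} and \Cref{mi:lem:tdelta}. (Your silent passage to squared norms in item~\ref{it:lemma:decon_estimate:1}, and your reliance on periodicity rather than literal fractional-part differences in item~\ref{it:lemma:decon_estimate:2}, are both fine.)

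There are, however, two genuine problems. First, in item~\ref{it:lemma:decon_estimate:1} your treatment of the coefficient bracket $f_{\e_n}(\decongen{y_{\e_n}}{n})-f_{\e_m}(\decongen{y_{\e_n}}{n})$ rests on identifying $\|f_{\e_n}-f_{\e_m}\|_{L^2(\Omega)}$ with the Cauchy quantity $\|\decongen{f_{\e_n}}{n}-\decongen{f_{\e_m}}{m}\|_{L^2(\Omega\times Y)}$ via the unfolding isometry. That identification is false in general, because the two functions are unfolded at \emph{different} scales: the isometry gives $\|f_{\e_n}-f_{\e_m}\|_{L^2(\Omega)}=\|\decongen{f_{\e_n}}{n}-\decongen{f_{\e_m}}{n}\|_{L^2(\Omega\times Y)}$, and $\decongen{f_{\e_m}}{n}\neq\decongen{f_{\e_m}}{m}$; for $f_\e(x)=g(x/\e)$ with $g$ periodic and nonconstant, $\decon{f_\e}=g(y)$ is Cauchy while $f_{\e_n}-f_{\e_m}$ does not tend to zero in $L^2(\Omega)$. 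The step can be saved in item~\ref{it:lemma:decon_estimate:1}, but only by invoking the hypothesis $\|\nabla f_{\e_m}\|_\infty\leq C$ together with \cref{eq:est_unfold}, which bounds $\|\decongen{f_{\e_m}}{n}-\decongen{f_{\e_m}}{m}\|_\infty\leq C(\e_n+\e_m)$ — an ingredient you use only for the argument bracket; as written, the justification does not go through. Second, in item~\ref{it:lemma:decon_estimate:3} the step you yourself flag as the main obstacle — transferring the smallness of the coefficient error $f_{\e_n}-\recongen{f}{n}$ to the moving surface — is not closed by the tools you cite: \Cref{lemma:trace_convergence}$(ii)$ bounds $\e\|u\|^2_{L^2(\Gamma_\e(t))}$ by $\|u\|^2_{L^2(\Omega)}+\e^2\|\nabla u\|^2_{L^2(\Omega)}$ with a fixed constant, and for $u=f_{\e_n}-\recongen{f}{n}$ the term $\e^2\|\nabla u\|^2_{L^2(\Omega)}$ is merely bounded, not small, so this inequality alone leaves a nonvanishing contribution and cannot yield $C_m\to0$. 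One needs either a trace inequality with a Young-type weight, of the form $\e\|u\|^2_{L^2(\Gamma_\e(t))}\leq C\big(\delta^{-1}\|u\|^2_{L^2(\Omega)}+\delta\,\e^2\|\nabla u\|^2_{L^2(\Omega)}\big)$ with $\delta$ at one's disposal, or an argument on the unfolded cell using traces on the parallel surfaces $\Gamma^{(l)}$ with $|l|\leq a/10$ (the range guaranteed by \Cref{mi:thm:height}$(ii)$, since the evaluation points are $\Lambda(\gamma,\e^{-1}\decon{h_\e})$) combined with \Cref{lemma:trace_convergence}$(i)$. Until one of these refinements is supplied, item~\ref{it:lemma:decon_estimate:3} — the part of the lemma actually needed in \Cref{lemma:trace_convergence2} — remains unproved.
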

\begin{proof} 
Proofs of these technical estimates are given in \cite[Lemma 6.20]{E18}.
\end{proof}

\subsection{Limit behavior}
Based on the estimates established via \Cref{lemma:estimates:yz}, it is clear that $y_\e$ converges strongly to the identity operator and that both $Dy_\e$ and $z_\e$ have two-scale converging subsequences.
This in itself, however, is not enough to guarantee strong convergence of their unfolded counterparts, which in consequence may also impede strong convergence of $\decon{F_\e}$ and $\decon{J_\e}$ -- a property that is needed to make sure that passing to the limit $\e\to0$ is justified.
%

In the following lemma, we investigate the limit behavior of the dilated functions $\e^{-1}\decon{y_\e-\mathrm{Id}}$ and $\decon{z_\e}$.
\begin{lemma}\label{lemma:twosC_yz}
There exist functions $y,z\in L^2(S\times\Omega;H^1_\#(Y))^3$ such that
$$
\frac{1}{\e}\decon{y_\e-\mathrm{Id}}\to y-\mathrm{Id},\quad \decon{z_\e}\to z,\quad \decon{Dy_\e}\to D_yy,\quad \e\decon{Dz_\e}\to D_yz.
$$
\end{lemma}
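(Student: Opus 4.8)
The strategy is to establish strong two-scale convergence by showing that the unfolded sequences are Cauchy in $L^2(S\times\Omega\times Y)$, and then to identify the limits as $Y$-periodic functions whose gradients in $y$ are precisely the two-scale limits of $\decon{Dy_\e}$ and $\e\decon{Dz_\e}$. First I would record the a priori bounds from \Cref{lemma:estimates:yz}: $\|D_xy_\e-\mathds{I}\|_\infty\leq l_vC_w(l_v)$, $\e\|D_x^2y_\e\|_\infty\leq l_vC_w(l_v)$, $\e\|D_xz_\e\|_\infty\leq C_w(l_v)$, and $\e^2\|D_x^2z_\e\|_\infty\leq C_w(l_v)$, together with $|z_\e|\in[e^{-\e l_v t},e^{\e l_v t}]$ from \cref{est_zeps} and $\|\partial_t y_\e-\id\|$-type bounds coming from \cref{s:ode:1}. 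These immediately give: $\e^{-1}(y_\e-\id)$ is bounded in $L^\infty$ with spatial gradient $\e^{-1}(Dy_\e-\mathds{I})$ bounded in $L^\infty$ (hence, after unfolding, the $y$-gradient $\decon{Dy_\e}$ is bounded), and $\decon{z_\e}$ bounded in $L^\infty$ with $\e\decon{Dz_\e}$ bounded. By the standard two-scale compactness results (e.g.\ \cite{Al92,CDG08}), along a subsequence $\decon{\e^{-1}(y_\e-\id)}\rightharpoonup Y_0$, $\decon{Dy_\e}\rightharpoonup D_yy$ for some $y\in L^2(S\times\Omega;H^1_\#(Y))^3$, and similarly $\decon{z_\e}\rightharpoonup z$, $\e\decon{Dz_\e}\rightharpoonup D_yz$ with $z\in L^2(S\times\Omega;H^1_\#(Y))^3$; matching the gradient identities forces $Y_0=y-\id$ up to the macroscopic averaging. (I would be careful here that the correct normalization relating $\e^{-1}\decon{y_\e-\id}$ to $y-\id$ uses $\decon{\id}(x,y)=\e y+\e[x/\e]$ and the estimate \cref{eq:est_unfold}.)

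The heart of the proof is upgrading weak two-scale convergence to strong. For this I would show that $\big(\decon{Dy_\e}\big)$ and $\big(\e\decon{Dz_\e}\big)$ are Cauchy sequences in $L^2(S\times\Omega\times Y)$. Differentiating \cref{mi:eq:diffa:alter}, for indices $n>m$ one writes $Dw_{\e_n}-Dw_{\e_m}$ via its integral (Duhamel/Gronwall) representation driven by $\e A_\e(t,w_\e)Dw_\e$, and estimates the difference of the integrands. The terms split into: (a) differences of coefficients $A_{\e_n}(\tau,w_{\e_n})-A_{\e_m}(\tau,w_{\e_m})$, which after unfolding involve $\decon{v_\e}$, $\decon{Dv_\e}$, $\e\decon{D^2v_\e}$ evaluated at $\decon{y_\e}$ — these are handled by Assumption (A3) (strong two-scale convergence of the velocity data) combined with \Cref{lemma:decon_estimate} to absorb the composition with $y_\e$ into a Cauchy-controllable term plus a vanishing remainder $C_m\to0$; (b) the initial-data difference $Dz_\e(0,\cdot)=-L_{\Gamma_\e}(P_{\Gamma_\e})(\mathds{I}-d_{\Gamma_\e}L_{\Gamma_\e})^{-1}$, whose unfolding is $\e^{-1}$ times a fixed periodic profile by \cref{seq:decongeometric:5,seq:decongeometric:6}, so $\e\decon{Dz_\e(0,\cdot)}$ is literally $\e$-independent after unfolding and contributes no Cauchy defect; (c) the linear self-term $\e A_\e Dw_\e$ which is closed off by Gronwall. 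Running Gronwall on the resulting scalar inequality for $\|\decongen{Dw_{\e_n}}{n}-\decongen{Dw_{\e_m}}{m}\|^2_{L^2}$ yields a bound $\leq C_m+C_m'$ with $C_m,C_m'\to0$, hence the Cauchy property and thus strong convergence of the whole sequence (since the weak two-scale limit is unique). Strong convergence of $\e^{-1}\decon{y_\e-\id}$ to $y-\id$ and of $\decon{z_\e}$ to $z$ then follows from integrating the (strongly convergent) gradients together with strong convergence of the traces/initial values, using \Cref{lemma:trace_convergence}(i) and the fact that $y_\e(0,\cdot)=\id$, $z_\e(0,\cdot)=-n_{\Gamma_\e}(P_{\Gamma_\e}\cdot)$ whose unfoldings converge strongly by \cref{seq:decongeometric:1}.

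The main obstacle I anticipate is term (a): controlling the composition of the (only strongly two-scale convergent, not uniformly convergent) velocity data with the moving argument $y_\e$, uniformly in $\e$ and with the right $\e$-powers on the second derivatives. The delicate point is that $D^2v_\e$ only enjoys an $\e^{-1}$-type bound, so it appears multiplied by exactly one power of $\e$ in $A_\e$ (the $|z|D^2v_\e$ block), and one must verify that after unfolding the scaling balances so that Assumption (A3)'s third convergence $\e\decon{D^2v_\e}\to D_y^2v$ is exactly what is needed — any mismatch in $\e$-powers would break the argument. A secondary technical nuisance is that $\decon{y_\e}$ is not itself of the form $[\,\cdot\,]^\e$ of a fixed function, so \Cref{lemma:decon_estimate}, Parts \ref{it:lemma:decon_estimate:1}--\ref{it:lemma:decon_estimate:2}, must be invoked precisely in the form stated (composition of $\decon{f_\e}$ with $\decon{y_\e}$) and the resulting bound must be recirculated through Gronwall without losing the $C_m\to0$ decay. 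Once these compositions are tamed, the remaining estimates are routine Gronwall bookkeeping.
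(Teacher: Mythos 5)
Your overall strategy is the paper's: unfold the ODE system \cref{s:ode:1,s:ode:2,s:ode:3,s:ode:4} and its differentiated version \cref{mi:eq:diffa:alter}, show the unfolded quantities are Cauchy in $L^2(S\times\Omega\times Y)$ via Gronwall, handle the compositions with $y_\e$ through \Cref{lemma:decon_estimate} and Assumption (A3), and keep track of the $\e$-scalings in the $|z|D^2v_\e$ block and in the Weingarten initial datum $Dz_\e(0,\cdot)$. (The preliminary weak two-scale compactness step is superfluous — the Cauchy argument produces the limits directly — but harmless.) The genuine problem is the order in which you run the argument.

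First, your final step, deducing $\e^{-1}\decon{y_\e-\mathrm{Id}}\to y-\mathrm{Id}$ and $\decon{z_\e}\to z$ ``by integrating the strongly convergent gradients together with initial values'', does not go through: since $\decon{Dy_\e}-\mathds{I}=D_y\big(\e^{-1}\decon{y_\e-\mathrm{Id}}\big)$, strong convergence of the gradients controls only the oscillation of the unfolded dilation about its cell mean (Poincar\'e--Wirtinger in $y$); the cell averages are macroscopic functions of $x$ that the gradients do not see, and \Cref{lemma:trace_convergence}$(i)$ points in the opposite direction (from convergence of $\decon{u_\e}$ and $\e\decon{\nabla u_\e}$ to convergence of traces), so it cannot be used to recover the functions. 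Second, the gradient-level Cauchy estimate is circular as you order it: the coefficient differences $A_{\e_n}(t,w_{\e_n})-A_{\e_m}(t,w_{\e_m})$ contain $z_\e/|z_\e|$, $|z_\e|$ and $B(z_\e)$, so bounding them requires exactly the function-level Cauchy property of $\decon{z_\e}$ (together with control of the $\decon{y_\e}$-differences inside the compositions) that you postpone until after the gradient step. The repair is the paper's ordering: first run the Gronwall/Cauchy argument on the unfolded functions themselves — the unfolded equations for $\e^{-1}\decon{y_\e-\mathrm{Id}}$ and $\decon{z_\e}$ have initial data whose unfoldings are $\e$-independent, and their right-hand sides are handled by (A3), \cref{est_zeps}, the Lipschitz bounds on $z\mapsto z/|z|$ and $B$, and \Cref{lemma:decon_estimate} — and only then (or in a single coupled Gronwall estimate for all four quantities) treat $\decon{Dy_\e}$ and $\e\decon{Dz_\e}$. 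Note in particular that the Cauchy property of the dilated quantity $\e^{-1}\decon{y_\e-\mathrm{Id}}$ is genuinely nontrivial (the uniform bound $\|y_\e-\mathrm{Id}\|_\infty\leq\e Tl_v$ only yields boundedness after division by $\e$), so it cannot be obtained as a by-product of the gradient step; as written, your proposal never actually establishes it.
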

\begin{proof}
Let $\delta>0$ be given and let $n,m\in\N$, such that $n>m$ and such that $e^{\e_ml_vT_v}<2$.\footnote{This is a mere technicality to allow for a more compact notation of the estimates. Here, we do not care about the details of the specific estimates, we only want to ensure convergence.}
Taking a look at the ODE sytem given by \cref{s:ode:1,s:ode:2,s:ode:3,s:ode:4} and its corresponding system that emerges by differentiation with respect to the spatial variable, we find that (in $S\times\Omega\times\Sigma$, $(i=n,m)$)
\begin{subequations}
\begin{align}
\e_i^{-1}\partial_t\decongen{y_{\e_i}-\mathrm{Id}}{i}&=\frac{\decongen{z_{\e_i}}{i}}{|\decongen{z_{\e_i}}{i}|}v_{\e_i}(\decongen{y_{\e_i}}{i}),\\
\partial_t\decongen{z_{\e_i}}{i}&=\e_i\big|\decongen{z_{\e_i}}{i}\big|\nabla v_{\e_i}(\decongen{y_{\e_i}}{i}),\\[0.3cm]
\partial_t\decongen{Dy_{\e_i}}{i}&=\e_iA_{\e_i}^{(11)}\left(\decongen{w_{\e_i}}{i}\right)\decongen{Dy_{\e_i}}{i}
	+\e_iA_{\e_i}^{(11)}\left(\decongen{w_{\e_i}}{i}\right)\decongen{Dz_{\e_i}}{i},\\
\e_i\partial_t\decongen{Dz_{\e_i}}{i}&=\e_i^2A_{\e_i}^{(21)}\left(\decongen{w_{\e_i}}{i}\right)\decongen{Dy_{\e_i}}{i}
	+\e_i^2A_{\e_i}^{(22)}\left(\decongen{w_{\e_i}}{i}\right)\decongen{Dz_{\e_i}}{i}.
\end{align}
\end{subequations}
Now, subtracting these equations for $i=n$ and $i=m$ from one another, multiplying with the corresponding differences, and integrating over $\Omega\times Y$, we are led to
\begin{subequations}\label[in]{ineq:st2sc}
\begin{multline}\label[in]{ineq:st2sC_y}
\ddt\big\|\e_n^{-1}\decongen{y_{\e_n}-\mathrm{Id}}{n}-\e_m^{-1}\decongen{y_{\e_m}-\mathrm{Id}}{m}\big\|^2_{L^2(\Omega\times Y)}\\
	\leq2\int_{\Omega\times Y}\Big|\frac{\decongen{z_{\e_n}}{n}}{|\decongen{z_{\e_n}}{n}|}v_{\e_n}(\decongen{y_{\e_n}}{n})-\frac{\decongen{z_{\e_m}}{m}}{|\decongen{z_{\e_m}}{m}|}v_{\e_m}(\decongen{y_{\e_m}}{m})\Big|\\
	\Big|\e_n^{-1}\decongen{y_{\e_n}-\mathrm{Id}}{n}-\e_m^{-1}\decongen{y_{\e_m}-\mathrm{Id}}{m}\Big|\di{(x,y)},
\end{multline}
\begin{multline}\label[in]{ineq:st2sc_z}
\ddt\big\|\decongen{z_{\e_n}}{n}-\decongen{z_{\e_m}}{m}\big\|^2_{L^2(\Omega\times Y)}\\
	\leq2\int_{\Omega\times Y}\Big|\e_n\big|\decongen{z_{\e_n}}{n}\big|\nabla v_{\e_n}(\decongen{y_{\e_n}}{n})-\e_m\big|\decongen{z_{\e_m}}{m}\big|\nabla v_{\e_m}(\decongen{y_{\e_m}}{m})\Big|\\
	\Big|\decongen{z_{\e_n}}{n}-\decongen{z_{\e_m}}{m}\Big|\di{(x,y)}.
\end{multline}
%
\end{subequations}
To proceed in showing that these sequences are Cauchy sequences, several independent estimates are needed to manage the right hand sides of \cref{ineq:st2sC_y,ineq:st2sc_z}.
In the following, we heavily rely on the estimates established by \Cref{lemma:estimates:yz}.
With the reverse triangle inequality, we get
\begin{subequations}
\begin{equation}\label[in]{ineq:st2sc:1}
\big||\decongen{z_{\e_n}}{n}|-|\decongen{z_{\e_m}}{m}|\big|\leq\big|\decongen{z_{\e_n}}{n}-\decongen{z_{\e_m}}{m}\big|,
\end{equation}
Since $e^{\e_ml_vT_v}<2$, we also see that
\begin{equation}\label[in]{ineq:st2sc:2}
\left|\frac{\decongen{z_{\e_n}}{n}}{|\decongen{z_{\e_n}}{n}|}-\frac{\decongen{z_{\e_m}}{m}}{|\decongen{z_{\e_m}}{m}|}\right|\leq10\left|\decongen{z_{\e_n}}{n}-\decongen{z_{\e_m}}{m}\right|.
\end{equation}
Moreover, for $f_\e=v_{\e},\e\nabla v_{\e}$, we can apply \Cref{lemma:decon_estimate} to get
\begin{multline}\label[in]{ineq:st2sc:3}
\big\|f_{\e_n}(\decongen{y_{\e_n}}{n})-f_{\e_m}(\decongen{y_{\e_n}}{m})\big\|_{L^2(\Omega\times Y)}^2\\
	\leq C_m+C\left(\big\|\decongen{f_{\e_n}}{n}-\decongen{f_{\e_m}}{m}\big\|_{L^2(\Omega\times Y)}^2
	+\big\|\decongen{y_{\e_n}}{n}-\decongen{y_{\e_m}}{m}\big\|_{L^2(\Omega\times Y)}^2\right),
\end{multline}
where $\lim C_m=0$.
As $y_{\e}$ is a cauchy sequence (it converges strongly to the identity operator), it can also be estimated via a function $C_m$ converging to 0.  
The matrix valued function $B$, which is defined via \cref{eq:def:B}, is Lipschitz continuous with Lipschitz constant 2, i.e., 
\begin{equation}\label[in]{ineq:st2sc:4}
\big|B(\decongen{z_{\e_n}}{n})-B(\decongen{z_{\e_m}}{m})\big|\leq 2|\decongen{z_{\e_n}}{n}-\decongen{z_{\e_m}}{m}\big|.
\end{equation}
\end{subequations}
Adding \cref{ineq:st2sC_y,ineq:st2sc_z}, using the estimates given by \cref{ineq:st2sc:1,ineq:st2sc:2,ineq:st2sc:3} as well as Assumption (A3), and applying Gronwall's inequality, we infer
\begin{multline}
\big\|\e_n^{-1}\decongen{y_{\e_n}-\mathrm{Id}}{n}-\e_m^{-1}\decongen{y_{\e_m}-\mathrm{Id}}{m}\big\|^2_{L^2(\Omega\times Y)}
+\big\|\decongen{z_{\e_n}}{n}-\decongen{z_{\e_m}}{m}\big\|^2_{L^2(\Omega\times Y)}\\
\leq C_m+C\left(\left\|\decongen{v_{\e_n}}{n}-\decongen{v_{\e_m}}{m}\right\|^2+\big\|\e_n\decongen{\nabla v_{\e_n}}{n}-\e_m\decongen{\nabla v_{\e_m}}{m}\big\|^2\right)
\end{multline}
for all $n,m\in\N$ such that $n,m>N$ for sufficiently large $N\in\N$ (which is independent of $\e$ and $t$).
This implies
$$
\frac{1}{\e}\decon{y_\e-\mathrm{Id}}\to y-\mathrm{Id},\quad \decon{z_\e}\to z\quad\text{in}\ \ L^2(S\times\Omega\times Y)^3.
$$
Similarly, we also get (for more details, we refer to \cite[Lemma 6.21]{E18})
%
%
$$
\decon{Dy_\e}\to D_yy,\quad \e\decon{Dz_\e}\to D_yz\quad\text{in}\ \ L^2(S\times\Omega\times Y)^{3\times3}.
$$
\end{proof}

\begin{remark}
As a consequence of \Cref{lemma:trace_convergence}, this implies
\begin{equation*}
\frac{1}{\e}\decon{y_\e-\mathrm{Id}}\to y-\mathrm{Id},\quad \decon{z_\e}\to z\quad\text{in}\ \ L^2(S\times\Omega\times\Gamma)^3.
\end{equation*}
\end{remark}

%

\begin{lemma}
The following convergences hold:
$$
\frac{1}{\e}\decon{y_\e^{-1}-\mathrm{Id}}\to y^{-1}-\mathrm{Id},\quad \decon{q_\e}\to z(y^{-1}),\quad\e^{-1}\widetilde{\p}_\e\to\widetilde{\p},\quad\e\nabla q_\e\to\nabla_yq\quad\text{in} \ \ L^2(S\times\Omega\times Y).
$$
\end{lemma}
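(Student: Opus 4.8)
The four convergences must be established in order, since each builds on the previous. The strategy is to transfer the strong two-scale convergence of $y_\e$ and $z_\e$ (from \Cref{lemma:twosC_yz}) to their composites involving $y_\e^{-1}$, using the Cauchy-sequence technique in $L^2(\Omega\times Y)$ that was employed in the proof of \Cref{lemma:twosC_yz} together with the composite estimates of \Cref{lemma:decon_estimate}.

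First I would handle $\e^{-1}\decon{y_\e^{-1}-\mathrm{Id}}\to y^{-1}-\mathrm{Id}$. The key algebraic identity is $y_\e(t,y_\e^{-1}(t,x))=x$, which after rearranging gives $y_\e^{-1}(t,x)-x = -\big(y_\e(t,y_\e^{-1}(t,x))-y_\e^{-1}(t,x)\big)$; dividing by $\e$ this reads $\e^{-1}(y_\e^{-1}-\mathrm{Id}) = -\e^{-1}(y_\e-\mathrm{Id})\circ y_\e^{-1}$. Applying the unfolding operator and \Cref{lemma:decon_estimate}\eqref{it:lemma:decon_estimate:2} with $f_\e = \e^{-1}(y_\e-\mathrm{Id})$ (whose unfolding converges by \Cref{lemma:twosC_yz}) and $g_\e = y_\e^{-1}$, I would bound $\|\e_n^{-1}\decongen{y_{\e_n}^{-1}-\mathrm{Id}}{n}-\e_m^{-1}\decongen{y_{\e_m}^{-1}-\mathrm{Id}}{m}\|_{L^2}$ by $C_m$ plus a constant times the differences of $\decongen{y_\e^{-1}}{}$ and $\e^{-1}\decongen{y_\e^{-1}}{}$ themselves — but the latter two quantities are, up to the already-controlled $\decongen{y_\e}{}\to\mathrm{Id}$ term, precisely the quantities being estimated, so after absorbing and a Gronwall argument one gets a Cauchy sequence. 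The limit is identified as $y^{-1}-\mathrm{Id}$ where $y^{-1}(t,x,y)$ solves $y(t,x,y^{-1})=x$ pointwise. With this in hand, $\decon{q_\e}\to z(y^{-1})$ follows immediately: $q_\e = z_\e\circ y_\e^{-1}$, and one applies \Cref{lemma:decon_estimate}\eqref{it:lemma:decon_estimate:2} again with $f_\e = z_\e$ (unfolding converges by \Cref{lemma:twosC_yz}) and $g_\e = y_\e^{-1}$ (just shown), yielding the Cauchy property and hence strong convergence in $L^2(S\times\Omega\times Y)$ to $z\circ y^{-1}$.

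For $\e^{-1}\widetilde{\p}_\e\to\widetilde{\p}$ I would exploit the geometric description of $\widetilde{\p}_\e$ from the proof of \Cref{mi:lem:solfbp}: on $y_\e(t,U_{\Gamma_\e})$ one has $\nabla\widetilde{\p}_\e(t,x) = z_\e(t,y_\e^{-1}(t,x)) = q_\e(t,x)$, together with the value $\widetilde{\p}_\e(t,\eta_\e(t,\gamma))=-h_\e(t,\gamma)$ along the transported interface and the normalization $\widetilde{\p}_\e = \pm\e a$ away from the tube. Since $\e^{-1}\widetilde{\p}_\e$ is Lipschitz with gradient $\e^{-1}\nabla\widetilde{\p}_\e$ bounded uniformly (by \Cref{lemma:estimate_phi}), and $\e\nabla(\e^{-1}\widetilde{\p}_\e) = \nabla\widetilde{\p}_\e = q_\e$ unfolds to a convergent sequence, the pair $(\e^{-1}\decon{\widetilde{\p}_\e},\, \e\decon{\nabla(\e^{-1}\widetilde{\p}_\e)})$ converges; combined with the known convergence of the boundary data (the height function, via the analogue of Assumption (A3) for $h_\e$ that underlies \Cref{lemma:etatomu} and is established in \Cref{s:interface_movement}), a Poincaré-type argument in the $y$-variable on the tube pins down the limit $\widetilde{\p}$ as the level-set function associated to the limiting motion. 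Finally $\e\nabla q_\e\to\nabla_y q$ is obtained by differentiating the relation $q_\e = \nabla\widetilde{\p}_\e$, i.e. $\e\nabla q_\e = \e D^2\widetilde{\p}_\e = (Dy_\e)^{-1}Dz_\e\circ y_\e^{-1}$ (using the formula for $D^2\widetilde{\p}_\e$ from \Cref{lemma:estimate_phi}), and passing to the limit using $\e\decon{Dz_\e}\to D_y z$ and $\decon{Dy_\e}\to D_y y$ from \Cref{lemma:twosC_yz} together with the strong convergence of $\decon{y_\e^{-1}}$ and the composite estimate \Cref{lemma:decon_estimate}\eqref{it:lemma:decon_estimate:2} applied to the matrix-valued entries.

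\textbf{Main obstacle.} The delicate point is the inversion step for $y_\e^{-1}$: one must carefully set up the Cauchy estimate so that the terms involving $\decongen{y_\e^{-1}}{}$ on the right-hand side of \Cref{lemma:decon_estimate}\eqref{it:lemma:decon_estimate:2} can be absorbed into the left — this requires the uniform bi-Lipschitz bound $\|(Dy_\e)^{-1}\|_\infty\le 2$ (from \Cref{lemma:tdelta} and the Neumann-series estimate in \Cref{mi:lem:tdelta}) so that the composition does not amplify the $L^2$-distance, and it requires tracking both $\|\decongen{y_\e^{-1}}{n}-\decongen{y_\e^{-1}}{m}\|$ and the dilated version $\|\e_n^{-1}\decongen{y_\e^{-1}}{n}-\e_m^{-1}\decongen{y_\e^{-1}}{m}\|$ simultaneously as a coupled pair, exactly as $y_\e$ and $z_\e$ were coupled in \Cref{lemma:twosC_yz}. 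Once that bootstrap closes, the remaining three convergences are routine applications of \Cref{lemma:decon_estimate} and the already-proven convergences.
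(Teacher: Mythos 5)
Your steps for $\decon{q_\e}$ and for $\e\decon{\nabla q_\e}$ coincide with the paper's argument, but the two steps that carry the real weight are not sound as written. For $\e^{-1}\decon{y_\e^{-1}-\mathrm{Id}}$ you replace the paper's argument by the static identity $y_\e^{-1}-\mathrm{Id}=-(y_\e-\mathrm{Id})\circ y_\e^{-1}$ followed by \Cref{lemma:decon_estimate}\eqref{it:lemma:decon_estimate:2} and an ``absorption''. This does not close: the right-hand side of \Cref{lemma:decon_estimate}\eqref{it:lemma:decon_estimate:2} contains $C\big\|\e_n^{-1}\decongen{y_{\e_n}^{-1}}{n}-\e_m^{-1}\decongen{y_{\e_m}^{-1}}{m}\big\|^2$, which is (up to the identity part) exactly the quantity on the left, and the constant $C$ there is a generic constant reflecting the Lipschitz norms of the composed function — nothing in that lemma, nor the bi-Lipschitz bound $\|(Ds_\e)^{-1}\|_\infty\le2$, makes it strictly less than one, so you cannot absorb. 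Invoking ``Gronwall'' does not help either, because in your setup there is no differential inequality in time to apply it to. The paper avoids precisely this trap by using the ODE characterization \cref{eq:Dtyinv}: since $\partial_t\big(\e^{-1}(y_\e^{-1}-\mathrm{Id})\big)$ is an $O(1)$ right-hand side built from $Dy_\e$, $z_\e$, $v_\e$ composed with $y_\e^{-1}$, and since the difference vanishes at $t=0$, one gets $\ddt\|X_{nm}\|^2\le \text{(small)}+C\|X_{nm}\|^2$ and Gronwall in time tolerates an arbitrary constant $C$. If you want your route to work you must either exploit the smallness $\|Dy_\e-\mathds{I}\|_\infty\le\nicefrac34$ on $[0,T_v]$ quantitatively in place of the generic $C$ (which \Cref{lemma:decon_estimate} does not give you), or switch to the time-evolution argument.

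The third convergence, $\e^{-1}\decon{\widetilde\p_\e}\to\widetilde\p$, is also problematic as you argue it: you pin down the limit using ``the known convergence of the boundary data (the height function) \ldots established in \Cref{s:interface_movement}''. Section~\ref{s:interface_movement} only provides the uniform bounds of \Cref{mi:thm:height}; the strong two-scale convergence $\e^{-1}\decon{h_\e}\to h$ is proved only in \Cref{lemma:trace_convergence2}, whose proof uses the convergences of $\e^{-1}\widetilde\p_\e$, $\decon{q_\e}$ and $\e\decon{\nabla q_\e}$ from the present lemma — so your argument is circular within the paper's logical order. (A repair along your lines would have to anchor the Poincar\'e argument instead on the constant values $\pm\e a$ of $\widetilde\p_\e$ outside the deformed tube.) The paper again sidesteps this by a time argument: $\partial_t\widetilde\p_\e$ is governed by \cref{eq:motion_problem}, i.e. $\partial_t(\e^{-1}\widetilde\p_\e)=|q_\e|v_\e$ with $\nabla\widetilde\p_\e=q_\e$, so the Cauchy property follows from the already established convergence of $\decon{q_\e}$ and Assumption (A3) via Gronwall, with no reference to $h_\e$.
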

\begin{proof}
We recall that $y_\e^{-1}$ can be characterized by \cref{eq:Dtyinv}.
This leads us to
\begin{multline*}
\ddt\big\|\e_n^{-1}\decongen{y_{\e_n}^{-1}-\mathrm{Id}}{n}-\e_m^{-1}\decongen{y_{\e_m}^{-1}-\mathrm{Id}}{m}\big\|^2_{L^2(\Omega\times Y)}\\
\leq\int_{\Omega\times Y}\Big|Dy_{\e_n}(\decongen{y_{\e_n}^{-1}}{n})^{-1}\frac{z_{\e_n}(\decongen{y_{\e_n}^{-1}}{n})}{|z_{\e_n}(\decongen{y_{\e_n}^{-1}}{n})|}v_{\e_n}(y_{\e_n}(\decongen{y_{\e_n}^{-1}}{n}))\hspace{2cm}\\
\hspace{2cm}-Dy_{\e_m}(\decongen{y_{\e_m}^{-1}}{m})^{-1}\frac{z_{\e_m}(\decongen{y_{\e_m}^{-1}}{m})}{|z_{\e_m}(\decongen{y_{\e_m}^{-1}}{m})|}v_{\e_m}(y_{\e_m}(\decongen{y_{\e_m}^{-1}}{m}))\Big|\\
\cdot\big|\e_n^{-1}\decongen{y_{\e_n}^{-1}-\mathrm{Id}}{n}-\e_n^{-1}\decongen{y_{\e_n}^{-1}-\mathrm{Id}}{m}\big|\di{(x,y)}.
\end{multline*}
Taking into considerations the a-priori estimates available for the involved functions and the strong convergence results formulated in \Cref{lemma:twosC_yz}, as well as the estimates given in \Cref{lemma:decon_estimate}, it follows that $\e^{-1}\decon{y_\e^{-1}-\mathrm{Id}}$ is a Cauchy sequence.
Similarly, $\decon{q_\e}=\decon{z_\e(y_\e^{-1})}$ is a Cauchy sequence due to \Cref{lemma:decon_estimate}\,\eqref{it:lemma:decon_estimate:2}.
Since $\partial_t\widetilde{\p}_\e$ is governed by \cref{eq:motion_problem} and since $\nabla\widetilde{\p}_\e=q_\e$, we infer
\begin{equation*}
\ddt\big\|\e_n^{-1}\widetilde{\p}_{\e_n}-\e_m^{-1}\widetilde{\p}_{\e_m}\big\|^2_{L^2(\Omega\times Y)}
\leq\int_{\Omega\times Y}\big|\left|q_{\e_n}\right|v_{\e_n}
-\left|q_{\e_m}\right|v_{\e_m}\big|\di{(x,y)}
\end{equation*}
which shows that $\e^{-1}\widetilde{\p}_{\e}$ also converges strongly. 
Finally, as
$$
\e\nabla{q}_\e=\e D^2\widetilde{\p}_\e=\e\big(Dy_\e(y_\e^{-1})\big)^{-1}Dz_\e(y_\e^{-1}),
$$
we also get the strong convergence of $\e\decon{\nabla q_\e}$.
\end{proof}

Since the quantity $\e\|h_\e\|_\infty+\|\nabla_{\Gamma_\e}h_\e\|_\infty$ is bounded indepedently of the parameter $\e$, we can find a constant $C_h>0$ such that
$$
\frac{1}{\sqrt{\e}}\|h_\e\|_{L^2(S\times\Gamma_\e)}+\sqrt{\e}\|\nabla_{\Gamma_\e}h_\e\|_{L^2(S\times\Gamma_\e)^3}\leq C_h.
$$
As a result, we conclude the existence of a function $h\in L^2(S\times\Omega;H^1(\Gamma))$ such that, up to a subsequence, %
$$
\frac{1}{\e}h_\e\twosc h,\quad\nabla_{\Gamma_\e}h_\e\twosc\nabla_\Gamma h
$$
Furthermore, it is clear that $h\in L^\infty(S\times\Omega\times\Gamma)$ and that $\decon{h_\e}\in L^\infty(S\times\Omega\times\Gamma)$ is bounded independently of $\e$.
As a consequence, there is a function $\tilde{h}\in L^\infty(S\times\Omega\times\Gamma)$ such that $\decon{h_\e}\rightharpoonup \tilde{h}$ in $L^2(S\times\Omega\times Y)$.
In the following, we are concerned with the limit behavior of $h_\e$.

\begin{lemma}\label{lemma:trace_convergence2}
There is $h\in L^2(S\times\Omega;H^1_\#(\Gamma))$ such that $\e^{-1}\decon{h_\e}\to h$ and such that $\decon{\nabla_{\Gamma_\e}h_\e}\to\nabla_y h$ in $L^2(S\times\Omega\times\Gamma)$.
\end{lemma}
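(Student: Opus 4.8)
The plan is to upgrade the weak two-scale convergences $\e^{-1}h_\e\twosc h$ and $\nabla_{\Gamma_\e}h_\e\twosc\nabla_{\Gamma}h$, which along a subsequence follow from the uniform bounds recorded just before the statement, to \emph{strong} two-scale convergence by showing that $\e^{-1}\decon{h_\e}$ is a Cauchy sequence in $L^2(S\times\Omega\times\Gamma)$; this is the mechanism already used for \Cref{lemma:twosC_yz}. The starting point is an evolution equation for the height function obtained by differentiating the defining identity $F_{\e,\gamma}(t,h_\e(t,\gamma))=0$ in time, cf.~\eqref{eq:timeh_e}. Using \eqref{lemma_estimate_Fegamma_1}, inserting $\partial_t\widetilde{\p}_\e=\e|\nabla\widetilde{\p}_\e|v_\e$, writing $\eta_\e(t,\gamma)=\Lambda_\e(\gamma,h_\e(t,\gamma))$ as in \eqref{seq:composita_eta}, and using that $\nabla\widetilde{\p}_\e(t,\cdot)=q_\e(t,\cdot)$ on $y_\e(t,U_{\Gamma_\e})\supset\Gamma_\e(t)\ni\eta_\e(t,\gamma)$, one obtains $\e^{-1}h_\e(0,\cdot)=0$ and
\[
\partial_t\big(\e^{-1}h_\e\big)(t,\gamma)=-\frac{\big|q_\e(t,\eta_\e(t,\gamma))\big|\,v_\e(t,\eta_\e(t,\gamma))}{q_\e(t,\eta_\e(t,\gamma))\cdot n_{\Gamma_\e}(\gamma)}\qquad(t\in[0,T_v],\ \gamma\in\Gamma_\e).
\]
The denominator equals $\big|q_\e(t,\eta_\e)\big|\big(n_{\Gamma_\e}(t,\eta_\e)\cdot n_{\Gamma_\e}(\gamma)\big)$ and, by \cref{phi_inv} together with the near-parallelism bound $n_{\Gamma_\e}(\gamma)\cdot n_{\Gamma_\e}(t,\overline{\gamma}_t)\geq 1-2te^{3\e l_vt}l_vC_\p(l_v)>0$ from the proof of \Cref{mi:thm:height}, it stays bounded away from zero uniformly in $\e$ on $[0,T_v]$ (possibly after shrinking $T_v$, which does not affect the statement), so dividing by it is a Lipschitz operation on the relevant range.

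Next I would unfold this equation over $\Gamma$, using $\decon{n_\e}(x,\gamma)=n(\gamma)$ from \eqref{seq:decongeometric:1} and the representation $\decon{\eta_\e}(x,\gamma)=\e\Lambda\big(\gamma,\e^{-1}\decon{h_\e}(x,\gamma)\big)+\e\eeval{x}$ from \eqref{seq:decongeometric:4}. For $n>m$, subtracting the equations for $\e_n$ and $\e_m$, testing with $\e_n^{-1}\decongen{h_{\e_n}}{n}-\e_m^{-1}\decongen{h_{\e_m}}{m}$, and integrating over $\Omega\times\Gamma$ produces, after the reverse triangle inequality and the lower bound on the denominators, a right-hand side controlled by the $L^2(\Omega\times\Gamma)$-differences of the composites $v_\e(\decon{\eta_\e})$ and $q_\e(\decon{\eta_\e})$. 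These are precisely the quantities estimated by \Cref{lemma:decon_estimate}\,\eqref{it:lemma:decon_estimate:3}, whose hypotheses hold for $v_\e$ by Assumption~(A3) and for $q_\e$ by the preceding lemma ($\decon{q_\e}\to z(y^{-1})$, $\e\decon{\nabla q_\e}\to\nabla_yq$): that lemma bounds each such difference by $\big\|\decongen{h_{\e_n}}{n}-\decongen{h_{\e_m}}{m}\big\|_{L^2(\Omega\times\Gamma)}^2+\big\|\e_n^{-1}\decongen{h_{\e_n}}{n}-\e_m^{-1}\decongen{h_{\e_m}}{m}\big\|_{L^2(\Omega\times\Gamma)}^2+C_m$ with $C_m\to0$, and $\big\|\decongen{h_{\e_n}}{n}-\decongen{h_{\e_m}}{m}\big\|_{L^2(\Omega\times\Gamma)}^2\leq C(\e_n^2+\e_m^2)\to0$ since $\|h_\e\|_\infty=O(\e)$ by \Cref{mi:thm:height}. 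Gronwall's inequality then shows $\e^{-1}\decon{h_\e}$ is Cauchy in $L^2(S\times\Omega\times\Gamma)$; its strong limit coincides with the two-scale limit $h$ along the subsequence, which both identifies it and forces the whole sequence to converge. Since $\decon{\nabla_{\Gamma_\e}h_\e}=\nabla_y\big(\e^{-1}\decon{h_\e}\big)$ (chain rule on the reference surface $\Gamma$) is bounded in $L^2(S\times\Omega\times\Gamma)$ by the a priori estimates, it follows moreover that $h\in L^2(S\times\Omega;H^1_\#(\Gamma))$ and that $\nabla_yh$ is the weak $L^2$-limit of $\decon{\nabla_{\Gamma_\e}h_\e}$.

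For the gradient convergence I would not differentiate the evolution equation but instead use the algebraic representation \eqref{eq:surface_gradient_h}, $\nabla_{\Gamma_\e}h_\e=\big(\mathds{I}_3-h_\e L_{\Gamma_\e}\big)\big(n_{\Gamma_\e}-(n_{\Gamma_\e}(t,\overline{\gamma}_t)\cdot n_{\Gamma_\e})^{-1}n_{\Gamma_\e}(t,\overline{\gamma}_t)\big)$, in which $\overline{\gamma}_t=\eta_\e(t,\gamma)\in\Gamma_\e(t)$ and $n_{\Gamma_\e}(t,\overline{\gamma}_t)=q_\e(t,\overline{\gamma}_t)/|q_\e(t,\overline{\gamma}_t)|$. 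Unfolding, the factor $h_\e L_{\Gamma_\e}$ becomes $\big(\e^{-1}\decon{h_\e}\big)L_\Gamma$ by \eqref{seq:decongeometric:5}, which converges strongly in $L^2(S\times\Omega\times\Gamma)$ by the first part and stays uniformly bounded; the composite $q_\e(\decon{\eta_\e})$ converges strongly by \Cref{lemma:decon_estimate}\,\eqref{it:lemma:decon_estimate:3} (the $h_\e$-differences in its bound being null sequences by the first part), with modulus bounded below uniformly in $\e$ by \cref{phi_inv}; and $\decon{n_\e}(x,\gamma)=n(\gamma)$. Since all the limiting factors lie in $L^\infty$ (by the uniform bounds of \Cref{lemma:estimates:yz}), strong $L^2$-convergence propagates through the products and through the quotient with bounded-below denominator, so the unfolded right-hand side of \eqref{eq:surface_gradient_h} converges strongly in $L^2(S\times\Omega\times\Gamma)$; by the weak-limit identification of the preceding paragraph its limit must be $\nabla_yh$, which gives $\decon{\nabla_{\Gamma_\e}h_\e}\to\nabla_yh$ and, a posteriori, convergence of the full sequence.

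The main obstacle is the bookkeeping in the Cauchy estimate: one must check that every composite occurring on the right-hand side of the evolution equation and in \eqref{eq:surface_gradient_h} is of the form handled by \Cref{lemma:decon_estimate}\,\eqref{it:lemma:decon_estimate:3}, and that all denominators -- $q_\e(t,\eta_\e)\cdot n_{\Gamma_\e}(\gamma)$, respectively $n_{\Gamma_\e}(t,\overline{\gamma}_t)\cdot n_{\Gamma_\e}(\gamma)$ -- stay uniformly bounded below on $[0,T_v]$, after possibly shrinking $T_v$, so that no error term survives the limit $m\to\infty$. Once this is in place the argument is structurally identical to that of \Cref{lemma:twosC_yz}.
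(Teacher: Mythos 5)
Your proposal is correct and follows essentially the same route as the paper: the same implicit-differentiation ODE for $h_\e$ rewritten via $q_\e=\nabla\widetilde{\p}_\e$ and $\eta_\e$, a Cauchy/Gronwall argument in $L^2(\Omega\times\Gamma)$ driven by \Cref{lemma:decon_estimate}\,\eqref{it:lemma:decon_estimate:3} with the denominators bounded below, and then the algebraic representation \eqref{eq:surface_gradient_h} to transfer the strong convergence to $\decon{\nabla_{\Gamma_\e}h_\e}$. The only cosmetic difference is that you evaluate $g'(\e^{-1}\widetilde{\p}_\e(\eta_\e))=g'(0)=1$ on the interface, where the paper keeps this factor and controls it via the strong convergence of $\e^{-1}\widetilde{\p}_\e$; both are fine.
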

\begin{proof}
Let $\delta>0$ and $n,m\in\N$, $n>m$.
Using the representation of the height function $h_\e$ in terms of $F_{\e,\gamma}$ as given by \cref{eq:timeh_e}, we have
\begin{equation}\label{lemma:trace_convergence:eq3}
\partial_t h_\e(t,\gamma)=-\frac{\partial_tF_{\e,\gamma}(t, h_{\e}(t,\gamma))}{\partial_2F_{\e,\gamma}(t, h_{\e}(t,\gamma))}\quad (t\in[0,T_v],\, \gamma\in\Gamma_\e).
\end{equation}
Now, integrating over $\Omega\times\Gamma$ and testing with the difference $\e_n^{-1}\decongen{ h_{\e_n}}{n}-\e_m^{-1}\decongen{ h_{\e_n}}{m}$ leads to
\begin{multline*}
\ddt\left\|\e_n^{-1}\decongen{ h_{\e_n}}{n}-\e_m^{-1}\decongen{ h_{\e_n}}{m}\right\|_{L^2(\Omega\times\Gamma)}^2\\
\leq2\int_{\Omega\times \Gamma}\left|\e_n^{-1}\frac{\partial_t\decongen{F_{\e_n,\gamma}( h_{\e_n})}{n}}{\decongen{\partial_2F_{\e_n,\gamma}( h_{\e_n})}{n}}
-\e_m^{-1}\frac{\partial_t\decongen{F_{\e_m,\gamma}( h_{\e_m})}{m}}{\decongen{\partial_2F_{\e_m,\gamma}( h_{\e_m})}{m}}\right|\\
\big|\e_n^{-1}\decongen{ h_{\e_n}}{n}-\e_m^{-1}\decongen{ h_{\e_n}}{m}\big|\di{(x,\gamma)}.
\end{multline*}
Using that $\partial_t\widetilde{\p}_\e$ is governed by \cref{eq:motion_problem} and $q_\e=\nabla\widetilde{\p}_\e$ , we get
\begin{equation}\label{lemma:trace_convergence:eq4}
\e^{-1}\partial_t\decon{F_{\e,\gamma}( h_{\e})}=\left|q_{\e}(\decon{\eta_{\e}}))\right|v_{\e}(\decon{\eta_{\e}}).
\end{equation}
Applying \Cref{lemma:decon_estimate}\eqref{it:lemma:decon_estimate:3} to $q_\e$ and $v_\e$, respectively, and using the strong convergence of $\decon{v_\e}$, $\decon{\nabla v_\e}$, $\decon{q_\e}$, and $\e\decon{\nabla q_\e}$, we are led to
\begin{multline}\label[in]{lemma:trace_convergence:eq1}
\left\|\e_n^{-1}\partial_t\decongen{F_{\e_n,\gamma}( h_{\e_n})}{n}-\e_m^{-1}\partial_t\decongen{F_{\e_m,\gamma}( h_{\e_m})}{m}\right\|^2_{L^2(\Omega\times\Gamma)}\\
\leq C(m)+C\left(\big\|\decongen{h_{\e_n}}{n}-\decongen{h_{\e_m}}{m})\big\|^2_{L^2(\Omega\times\Gamma)}
+\big\|\e_n^{-1}\decongen{h_{\e_n}}{n}-\e_m^{-1}\decongen{h_{\e_n}}{n}\big\|^2_{L^2(\Omega\times\Gamma)}\right)
\end{multline}
where $\lim_{m\to\infty}C(m)=0$.
As a next step, we estimate the difference with respect to $\partial_2 F_{\e,\gamma}$.
In view of \cref{lemma_estimate_Fegamma_1}, we have
\begin{equation}\label{lemma:trace_convergence:eq5}
\decon{\partial_2F_{\e,\gamma}( h_{\e})}
=g'(\e^{-1}\widetilde{\p}_\e(\decon{\eta_\e}))q_\e(\decon{\eta_\e})\cdot n
\end{equation}
and, due to the strong convergence of $\e^{-1}\decon{\widetilde{\p}_\e}$, $\decon{q_\e}=\decon{\nabla\widetilde{\p}_\e}$, and $\e\decon{\nabla q_\e}$, we can infer (again applying \Cref{lemma:decon_estimate}\eqref{it:lemma:decon_estimate:3})
\begin{multline}\label[in]{lemma:trace_convergence:eq2}
\left\|\e_n^{-1}\decongen{\partial_2F_{\e_n,\gamma}( h_{\e_n})}{n}-\e_m^{-1}\decongen{\partial_2F_{\e_m,\gamma}( h_{\e_m})}{m}\right\|^2_{L^2(\Omega\times\Gamma)}\\
\leq C_m+C\left(\big\|\decongen{h_{\e_n}}{n}-\decongen{h_{\e_m}}{m})\big\|^2_{L^2(\Omega\times\Gamma)}+\big\|\e_n^{-1}\decongen{h_{\e_n}}{n}-\e_m^{-1}\decongen{h_{\e_n}}{n}\big\|^2_{L^2(\Omega\times\Gamma)}\right)
\end{multline}
where $\lim_{m\to\infty}C_m\to0$.
Combining the estimates given by \cref{lemma:trace_convergence:eq1,lemma:trace_convergence:eq2} and applying Gronewall's inequality, it is then easy to see that $\e^{-1}\decon{h_{\e}}$ is, in fact, Cauchy.
%
%

Using the representation of $h_\e$ given in \cref{eq:surface_gradient_h}, we have
\begin{equation*}
\decon{\nabla_{\Gamma_\e} h_\e}=\left(\mathds{I}_3-\e^{-1}\decon{h_\e}L_{\Gamma}\right)\left(n-\frac{1}{n_{\Gamma_\e}(\decon{\eta_\e})\cdot n}n_{\Gamma_\e}(\decon{\eta_\e})\right).
\end{equation*}
Consequently, since $n_{\Gamma_\e}(\eta_\e)\cdot n_\e>\nicefrac{1}{2}$ and $|\e^{-1}h_\e|\leq\nicefrac{a}{10}$ in $[0,T_v]\times\Gamma_\e$, we are led to
\begin{multline*}
\left\|\decongen{\nabla_{\Gamma_{\e_n}} h_{\e_n}}{n}-\decongen{\nabla_{\Gamma_{\e_m}} h_{\e_m}}{m}\right\|_{L^2(S\times\Gamma_\e)}\\
\leq\frac{3}{2a}\left\|\e_n^{-1}\decon{h_{\e_n}}-\e_m^{-1}\decon{h_{\e_m}}\right\|_{L^2(S\times\Gamma_\e)}
+6\left\|\frac{n_{\e_n}(\decongen{\eta_{\e_n}}{n})}{n_{\e_n}(\decongen{\eta_{\e_n}}{n})\cdot n}-\frac{n_{\e_m}(\decongen{\eta_{\e_m}}{m})}{n_{\e_m}(\decongen{\eta_{\e_m}}{m})\cdot n}\right\|_{L^2(S\times\Gamma_\e)}
\end{multline*}
Now, due to $n_{\Gamma_\e}(\eta_\e)=\frac{\nabla\widetilde{\p}_\e(\eta_\e)}{|\nabla\widetilde{\p}_\e(\eta_\e)|}=\frac{q_\e(\eta_\e)}{|q_\e(\eta_\e)|}$, we further estimate
\begin{align*}
\bigg\|\frac{n_{\e_n}(\decongen{\eta_{\e_n}}{n})}{n_{\e_n}(\decongen{\eta_{\e_n}}{n})\cdot n}-\frac{n_{\e_m}(\decongen{\eta_{\e_m}}{m})}{n_{\e_m}(\decongen{\eta_{\e_m}}{m})\cdot n}&\bigg\|_{L^2(S\times\Gamma_\e)}\\
&\leq6\left\|\frac{q_{\e_n}(\decongen{\eta_{\e_n}}{n})}{|q_{\e_n}(\decongen{\eta_{\e_n}}{n})|}-\frac{q_{\e_m}(\decongen{\eta_{\e_m}}{m})}{|q_{\e_m}(\decongen{\eta_{\e_m}}{m})|}\right\|_{L^2(S\times\Gamma_\e)}\\
&\leq36\left\|q_{\e_n}(\decongen{\eta_{\e_n}}{n})-q_{\e_m}(\decongen{\eta_{\e_m}}{m})\right\|_{L^2(S\times\Gamma_\e)}.
\end{align*}
As both $\decon{q_\e}$ and $\e\decon{\nabla q_\e}$ converge, we can apply \Cref{lemma:decon_estimate}\eqref{it:lemma:decon_estimate:3} and conclude
\begin{multline*}
\left\|\decongen{\nabla_{\Gamma_{\e_n}} h_{\e_n}}{n}-\decongen{\nabla_{\Gamma_{\e_m}} h_{\e_m}}{m}\right\|_{L^2(S\times\Gamma_\e)}\\
\leq C_m +\frac{3}{2a}\left\|\e_n^{-1}\decon{h_{\e_n}}-\e_m^{-1}\decon{h_{\e_m}}\right\|_{L^2(S\times\Gamma_\e)}\\
+C\left(\left\|\decon{h_{\e_n}}-\decon{h_{\e_m}}\right\|_{L^2(S\times\Gamma_\e)}
+\left\|\e_n^{-1}\decon{h_{\e_n}}-\e_m^{-1}\decon{h_{\e_m}}\right\|_{L^2(S\times\Gamma_\e)}\right),
\end{multline*}
where, again, $\lim_{m\to\infty} C_m=0$.

\end{proof}
We introduce $\psi_\e=s_\e-\mathrm{Id}$ which implies (see \cref{def_se}) $D\psi_\e=D s_\e$.

\begin{lemma}\label{mi:lem:psilimit}
There is $\psi\in L^2(S\times\Omega;H^{1}_\#(Y))$ such that $\e^{-1}\decon{\psi_\e}\to\psi$ and such that $\decon{\nabla\psi_\e}\to\nabla_y\psi$ in $L^2(S\times\Omega\times Y)$.
\end{lemma}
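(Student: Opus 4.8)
The plan is to reduce the claim to the strong two-scale convergences $\e^{-1}\decon{h_\e}\to h$ and $\decon{\nabla_{\Gamma_\e}h_\e}\to\nabla_y h$ in $L^2(S\times\Omega\times\Gamma)$ already supplied by \Cref{lemma:trace_convergence2}. To this end I would express $\e^{-1}\decon{\psi_\e}$ and $\decon{\nabla\psi_\e}$ explicitly through $\e^{-1}\decon{h_\e}$, $\decon{\nabla_{\Gamma_\e}h_\e}$ and the $\e$-scaled geometric data — whose unfolding is explicit by \cref{eq:decongeometric} together with the elementary scaling identity $\decon{d_{\Gamma_\e}}(x,y)=\e\,d_\Gamma(y)$ — and then pass to the limit term by term.

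By \cref{def_se}, on $U_{\Gamma_\e}$ we have $\psi_\e=\bigl(h_\e\circ P_{\Gamma_\e}\bigr)\bigl(n_{\Gamma_\e}\circ P_{\Gamma_\e}\bigr)\chi\bigl(\dist(\cdot,\Gamma_\e)/(\e a)\bigr)$ and $\psi_\e\equiv 0$ otherwise; in particular $\psi_\e$ and $\nabla\psi_\e$ vanish near $\partial U_{\Gamma_\e}$ since $\chi(1)=0$. Differentiating this product — using $\nabla\dist(\cdot,\Gamma_\e)=\operatorname{sign}(d_{\Gamma_\e})\,n_{\Gamma_\e}\circ P_{\Gamma_\e}$, the chain rule $\nabla(g\circ P_{\Gamma_\e})=(DP_{\Gamma_\e})^{\top}(\nabla_{\Gamma_\e}g\circ P_{\Gamma_\e})$, and $Dn_{\Gamma_\e}=-L_{\Gamma_\e}$ — gives $\nabla\psi_\e$ as a sum of three matrix fields carrying, respectively, $(DP_{\Gamma_\e})^{\top}(\nabla_{\Gamma_\e}h_\e\circ P_{\Gamma_\e})$, the product $(h_\e\circ P_{\Gamma_\e})\,L_{\Gamma_\e}(P_{\Gamma_\e})\,DP_{\Gamma_\e}$, and $(h_\e\circ P_{\Gamma_\e})\,(\e a)^{-1}\chi'\bigl(\dist(\cdot,\Gamma_\e)/(\e a)\bigr)$ times fixed normal-direction tensors. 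Now I would unfold, using \cref{seq:decongeometric:1,seq:decongeometric:5,seq:decongeometric:3,seq:decongeometric:6}, $\decon{d_{\Gamma_\e}}=\e\,d_\Gamma$, and the exact (remainder-free) relations $\decon{h_\e\circ P_{\Gamma_\e}}(x,y)=\decon{h_\e}(x,P_\Gamma(y))$ and $\decon{\nabla_{\Gamma_\e}h_\e\circ P_{\Gamma_\e}}(x,y)=\decon{\nabla_{\Gamma_\e}h_\e}(x,P_\Gamma(y))$, which hold because $\decon{P_{\Gamma_\e}}(x,y)=\e P_\Gamma(y)+\e\eeval{x}$. The decisive observation is that $h_\e L_{\Gamma_\e}$ and $h_\e(\e a)^{-1}\chi'$ then become $\e^{-1}\decon{h_\e}$ times $\e$-independent bounded smooth functions of $y$ (namely $L_\Gamma(P_\Gamma(y))$, resp.\ $a^{-1}\chi'(|d_\Gamma(y)|/a)$), the first field becomes such a function of $y$ times $\decon{\nabla_{\Gamma_\e}h_\e}(\cdot,P_\Gamma(\cdot))$, and $\e^{-1}\decon{\psi_\e}(x,y)=\e^{-1}\decon{h_\e}(x,P_\Gamma(y))\,n(P_\Gamma(y))\,\chi(|d_\Gamma(y)|/a)$. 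Here the cut-off $\chi(|d_\Gamma(\cdot)|/a)$ is supported in $\Lambda\bigl(\Gamma\times[-\tfrac{2a}{3},\tfrac{2a}{3}]\bigr)\ssubset Y$.

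To pass to the limit, note that $y\mapsto(P_\Gamma(y),d_\Gamma(y))$ is a $C^1$-diffeomorphism of the tube onto $\Gamma\times(-a,a)$ with bounded Jacobian, so the normal extension $g\mapsto g(\cdot,P_\Gamma(\cdot))$ is bounded from $L^2(S\times\Omega\times\Gamma)$ into $L^2(S\times\Omega\times Y)$ (integrate out the normal variable); hence the convergences of \Cref{lemma:trace_convergence2} transfer, and multiplication by the fixed bounded smooth $y$-factors preserves strong $L^2$-convergence. Therefore $\e^{-1}\decon{\psi_\e}\to\psi$ and $\decon{\nabla\psi_\e}\to G$ in $L^2(S\times\Omega\times Y)$, where
\[
\psi(t,x,y):=\begin{cases}h(t,x,P_\Gamma(y))\,n(P_\Gamma(y))\,\chi\bigl(|d_\Gamma(y)|/a\bigr),& y\in\Lambda\bigl(\Gamma\times(-a,a)\bigr),\\ 0,& \text{otherwise},\end{cases}
\]
and $G$ is given by the analogous limit expressions with $h,\nabla_y h$ in place of their $\e$-counterparts. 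Finally I would identify $G=\nabla_y\psi$ and check the regularity: from $\nabla_y\decon{u_\e}=\e\,\decon{\nabla u_\e}$ one gets $\nabla_y\bigl(\e^{-1}\decon{\psi_\e}\bigr)=\decon{\nabla\psi_\e}$, so, since $\e^{-1}\decon{\psi_\e}\to\psi$ and $\nabla_y(\e^{-1}\decon{\psi_\e})\to G$ in $L^2(S\times\Omega\times Y)$, closedness of the distributional $y$-gradient yields $\psi(t,x,\cdot)\in H^1(Y)$ with $\nabla_y\psi=G$ for a.e.\ $(t,x)$, i.e.\ $\psi\in L^2(S\times\Omega;H^1(Y))$ and $\decon{\nabla\psi_\e}\to\nabla_y\psi$. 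Periodicity, $\psi\in L^2(S\times\Omega;H^1_\#(Y))$, holds because $\psi(t,x,\cdot)$ has support compactly contained in $Y$, so its zero-extension is $Y$-periodic in $H^1_{\mathrm{loc}}(\R^3)$; and $\psi\in L^\infty$ since $h\in L^\infty$ (established before \Cref{lemma:trace_convergence2}). I expect the main obstacle to be organizational rather than conceptual: carrying out the differentiation of \cref{def_se} and checking that every $\e$-power cancels — concretely, that each surviving coefficient of $\e^{-1}\decon{h_\e}$ and of $\decon{\nabla_{\Gamma_\e}h_\e}$ is an $\e$-independent bounded function of $y$, which is precisely what makes the $\e^{-1}$-normalization of $\psi_\e$ the right one. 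The transfer of strong convergence from $\Omega\times\Gamma$ to $\Omega\times Y$ and the closedness argument are then routine.
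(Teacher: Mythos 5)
Your proposal is correct and follows essentially the same route as the paper: reduce everything, via the explicit form \eqref{def_se} and the unfolded geometric identities \eqref{eq:decongeometric}, to the strong convergences of $\e^{-1}\decon{h_\e}$ and $\decon{\nabla_{\Gamma_\e}h_\e}$ from \Cref{lemma:trace_convergence2}, exploiting that after unfolding all $\e$-factors cancel and the tube integral reduces (by the normal-direction structure of $\Lambda$) to an integral over $\Omega\times\Gamma$. Your additional steps — identifying the gradient limit via $\nabla_y\decon{u_\e}=\e\decon{\nabla u_\e}$ and closedness, and getting periodicity from the compact support of the cut-off in $Y$ — are exactly the details the paper leaves implicit.
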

\begin{proof}
Let $n,m\in\N$ such that $m>n$ and set $\mu_\e(t,x)=h_\e(t,P_{\Gamma_\e}(x))$ as well as $\mu(t,x,y)=h(t,x,P_\Gamma(y))$.
We calculate
\begin{align*}
\e_n^{-1}\decongen{\psi_{\e_n}}{n}-\e_m^{-1}\decongen{\psi_{\e_m}}{m}
	&=\Big(\e_n^{-1}\decongen{\mu_{\e_n}}{n}-\e_m^{-1}\decongen{\mu_{\e_m}}{m}\Big)\chi\left(a^{-1}d_{\Gamma}\right)n(P_\Gamma).
\end{align*}
As a consequence,
\begin{multline*}
\int_{\Omega\times U_\Gamma}\left|\e_n^{-1}\decongen{\psi_{\e_n}}{n}-\e_m^{-1}\decongen{\psi_{\e_m}}{m}\right|^2\di{(x,y)}\\
\leq\int_{\Omega\times U_\Gamma}\left|\e_n^{-1}\decongen{\mu_{\e_n}}{n}-\mu\right|^2
+\left|\e_m^{-1}\decongen{\mu_{\e_m}}{m}-\mu\right|^2\di{(x,y)}.
\end{multline*}
Now, for fixed $x\in\Omega$, $\decon{\mu_\e}$ and $\mu$ are constant in the $y$ variable in the direction of the normal vector.
As a consequence,
\begin{equation*}
\int_{\Omega\times U_\Gamma}\left|\e^{-1}\decon{\mu_{\e}}-\mu\right|^2\di{(x,y)}=2a\int_{\Omega\times\Gamma}\left|\e_n^{-1}\decongen{h_{\e_n}}{n}-h\right|^2\di{(x,y)}.
\end{equation*}
The unfolded deformation gradient is given via (we refer to \cite[Section 2]{PSZ13})
\begin{multline*}
\decon{\nabla\psi_\e}=\left(\decon{\nabla\mu_\e}\right)^Tn(P_{\Gamma})\chi\left(a^{-1}d_{\Gamma}\right)\\
+\e^{-1}\decon{\mu_\e}\Big(L_\Gamma(P_\Gamma)\left(\mathds{I}-d_\Gamma L_\Gamma(P_\Gamma)\right)^{-1}(\mathds{I}-n(P_\Gamma)\otimes n(P_\Gamma))\chi\left(a^{-1}d_{\Gamma}\right)\\
+\chi'\left(a^{-1}d_{\Gamma}\right)n(P_\Gamma)\otimes n(P_\Gamma)\Big).
\end{multline*}
which leads us to
\begin{multline*}
\int_{\Omega\times U_\Gamma}\left|\decongen{\nabla\psi_{\e_n}}{n}-\decongen{\nabla\psi_{\e_m}}{m}\right|^2\di{(x,y)}\\
\leq C\int_{\Omega\times U_\Gamma}\left|\e_n^{-1}\decongen{\mu_{\e_n}}{n}-\e_m^{-1}\decongen{\mu_{\e_m}}{m}\right|^2
+\left|\decongen{\nabla\mu_{\e_n}}{n}-\decongen{\nabla\mu_{\e_m}}{m}\right|^2\di{(x,y)},
\end{multline*}
where $C>0$ is independent of $\e$.
Since
$$
\nabla\mu_{\e}(t,x)=\left(DP_{\Gamma_\e}(x)\right)^T\nabla_{\Gamma_\e}h_\e(t,P_{\Gamma_\e}(x))
$$
and
\begin{multline*}
\int_{\Omega\times U_\Gamma}\left|\decon{\nabla_{\Gamma_\e}h_\e(P_{\Gamma_\e})}(x,y)-\nabla_yh(t,x,P_{\Gamma}(y))\right|^2\di{(x,y)}\\
=2a\int_{\Omega\times\Gamma}\left|\decongen{\nabla_{\Gamma_\e}h_{\e_n}}{n}-\nabla_yh\right|^2\di{(x,y)},
\end{multline*}
we can conclude $\decon{\nabla\psi_{\e}}\to\nabla_y\psi$.
\end{proof}
\begin{remark}
Looking at the Definition of $s_\e$ given via \cref{def_se}, it is then clear that both $Ds_\e$ and $\det Ds_\e$ are also strongly two-scale convergent.
Due to the uniform boundedness in $L^\infty$, it is also clear that the limit functions are essentially bounded.
\end{remark}

\bibliography{literature}{}
\bibliographystyle{plain}

\end{document}